\newcommand{\bg}{\boldsymbol{g}}
\newcommand{\om}{\omega}
\newcommand{\ba}{\mathcal{G}}
\newcommand{\fg}{\frak g}
\newcommand{\fp}{\frak p}
\newcommand{\fk}{\frak k}
\newcommand{\fl}{\frak l}
\newcommand{\fh}{\frak h}
\newcommand{\fn}{\frak n}
\newcommand{\s}{\mathrm{\varsigma}}
\newcommand{\fa}{\frak a}
\newcommand{\fc}{\frak c}
\newcommand{\so}{\frak{so}(p+1,q+1)}
\newcommand{\sodc}{\frak{so}(2,4)}
\newcommand{\Ad}{{\rm Ad}}
\newcommand{\id}{{\rm id}}
\newcommand{\na}{\nabla}
\newcommand{\Rho}{{\mbox{\sf P}}}
\newcommand{\R}{\mathbb{R}}
\newcommand{\C}{\mathbb{C}}
\newtheorem{thm}{Theorem}[section]
\newtheorem{prop}{Proposition}[section]
\newtheorem*{prop*}{Proposition}
\newtheorem*{thm*}{Theorem}
\newtheorem{lem}{Lemma}[section]
\newtheorem*{lemma*}{Lemma}
\newtheorem{cor}{Corollary}[section]
\newtheorem*{cor*}{Corollary}
\newtheorem*{rem*}{Remark}
\theoremstyle{definition}
\newtheorem{def*}{Definition}[section]
\theoremstyle{remark}
\newtheorem{exam}{Example}
\begin{document}
\title{First BGG operators on homogeneous conformal geometries}
\author{Jan Gregorovi\v c and Lenka Zalabov\' a}
\address{J.G.: Institute of Discrete Mathematics and Geometry, TU Vienna, Wiedner Hauptstrasse 8-10/104, 1040 Vienna, Austria, and
    Department of Mathematics and Statistics, 
	Faculty of Science, Masaryk University,
	Kotl\' a\v rsk\' a 2, 611 37 Brno, Czech Republic  L.Z.: Institute of Mathematics, 
	Faculty of Science, University of South Bohemia, 
	Brani\v sovsk\' a 1760, 370 05 \v Cesk\' e Bud\v ejovice, and
    Department of Mathematics and Statistics, 
	Faculty of Science, Masaryk University,
	Kotl\' a\v rsk\' a 2, 611 37 Brno, Czech Republic}
 \email{jan.gregorovic@seznam.cz, lzalabova@gmail.com}
 \thanks{J.G. is supported by the Austrian Science Fund (FWF): P34369. L.Z. is supported by the grant GACR 20-11473S: Symmetry and invariance in analysis, geometric modelling and control theory. }
\keywords{homogeneous conformal geometry; first BGG operator; conformal Killing tensors; conformal Killing--Yano forms; twistor spinors; G\" odel metric; holonomy reductions; conformal circles}
\subjclass[2020]{53C18; 53C30; 53B30; 58J70; 58J60}

\begin{abstract}
We study first BGG operators and their solutions on homogeneous conformal geometries. We focus on conformal Killing tensors, conformal Killing--Yano forms and twistor spinors in particular.
We develop an invariant calculus that allows us to find solutions explicitly using only algebraic computations.
We also discuss applications to holonomy reductions and conserved quantities of conformal circles.
We demonstrate our result on examples of homogeneous conformal geometries coming mostly from general relativity.
\end{abstract}
\maketitle
%\tableofcontents

\section{Introduction}
In this article, we consider homogeneous conformal structure  $[g]$ of signature $(p,q)$ on a connected smooth manifold $M$ of dimension $n=p+q$. This is an equivalence class $[g]$ of pseudo--Riemannian metrics of signature $(p,q)$ spanning a line subbundle $\mathcal{E}[-2]\subset \bigodot^2 T^*M$ such that the group of conformal symmetries of $[g]$ acts transitively on $M$. The first BGG operators form an important class of conformally invariant differential operators defined on specific bundles $\mathcal{X}\to M$ on conformal manifolds, \cite{CD,BCEG,C-overdetermined,H-srni}. For example, conformal Killing tensors, conformal Killing--Yano forms, or twistor spinors, i.e., conformal Killing spinors, are solutions of first BGG operators on particular bundles $\mathcal{X}$, \cite{Y,PR,Kress,Sem,black-holes}. There is also a distinguished class of normal solutions of the first BGG operators, \cite{CGH1}. 

We develop an invariant calculus for first BGG operators on homogeneous conformal structures $(M,[g])$. This is a distinguished case of general calculus for homogeneous parabolic geometries studied in \cite{bgg-my} that we decided to study separately for its importance.  One can expect that due to the homogeneity, all the computations can be pulled back to a single point and then encoded as algebraic objects. Then the main result of the invariant calculus we present here can be summarized as follows.

\begin{thm}\label{thm-intro}
Let $(M,[g])$ be a conformal geometry with signature $(p,q)$ and consider a first BGG operator on a  bundle $\mathcal{X}\to M$. Suppose $K$ is a Lie group of conformal symmetries of $[g]$ acting transitively on $M$ and let $H$ be the stabilizer of a point $x\in M$. Then there are representations $\phi: K\to Gl(\mathbb{G})$ and  $\mu: H\to Gl(\mathbb{X})$ and an $H$--equivariant projection $\pi: \mathbb{G}\to \mathbb{X}$ such that $$(\mathcal{X}\to M)\cong (K\times_{\mu(H)} \mathbb{X}\to K/H),$$ and for each $v\in \mathbb{G}$, the section of $\mathcal{X}$ induced by the function
\[
K\to \mathbb{X},\ \   k\mapsto \pi( \phi(k)^{-1}(v))
\]
is a solution of the first BGG operator  on a  bundle $\mathcal{X}\to M$.
\end{thm}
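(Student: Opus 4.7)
The plan is to rely on the standard tractor/BGG framework for parabolic geometries, to which a conformal geometry of signature $(p,q)$ belongs (with $G = PO(p+1,q+1)$ and $P$ the stabiliser of a null line). For every first BGG operator on $\mathcal{X}$ there is an associated tractor bundle $\mathcal{V} = \mathcal{G}\times_P\mathbb{V}$, an algebraic $P$-equivariant projection $\Pi\colon\mathbb{V}\to\mathbb{X}$, a splitting operator $L\colon\Gamma(\mathcal{X})\to\Gamma(\mathcal{V})$, and a prolongation connection $\tilde\nabla = \nabla+\Psi$ such that $\sigma\in\Gamma(\mathcal{X})$ solves the first BGG operator if and only if $L(\sigma)$ is $\tilde\nabla$-parallel. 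Producing BGG solutions therefore amounts to producing $\tilde\nabla$-parallel sections of $\mathcal{V}$.

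Since $K$ acts by automorphisms of the parabolic geometry, it acts on $\mathcal{V}$ preserving $\nabla$ as well as the natural tensor $\Psi$, hence it preserves $\tilde\nabla$. So the space $\mathbb{G}$ of $\tilde\nabla$-parallel sections is a $K$-representation, carrying $\phi\colon K\to Gl(\mathbb{G})$. Fix $u_0\in\mathcal{G}$ over $x$; then $h\cdot u_0 = u_0\cdot\alpha(h)$ defines a group homomorphism $\alpha\colon H\to P$, and the orbit map $F\colon K\to\mathcal{G}$, $k\mapsto k\cdot u_0$, identifies $\mathcal{V}$ with $K\times_H\mathbb{V}$. Evaluation at $x$ followed by $\Pi$ gives an $H$-equivariant projection $\pi\colon\mathbb{G}\to\mathbb{X}$, with $\mu\colon H\to Gl(\mathbb{X})$ induced from $\alpha$ and the $P$-action on $\mathbb{X}$. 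For $v\in\mathbb{G}$, the function $k\mapsto\pi(\phi(k)^{-1}v)$ is the $H$-equivariant function on $K$ representing the BGG solution $\Pi\circ v\in\Gamma(\mathcal{X})$ in the trivialisation coming from $F$, which proves the conceptual content of the statement.

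The above is not yet the invariant calculus of the paper, which requires $\mathbb{G}$, $\phi$ and $\pi$ expressed purely algebraically at the single point $x$. Pulling back along $F$ yields the linear infinitesimal extension $\alpha_\omega\colon\fk\to\fg$, $\alpha_\omega(X) := \omega_{u_0}(X^*_{u_0})$, together with a left-invariant linear map $\Psi^*\colon\fk\to\mathrm{End}(\mathbb{V})$ encoding $\Psi$ at $u_0$. Parallelism of $\tilde\nabla$ applied to a function of the form $f(k) = \phi(k)^{-1}v$ then reduces to the identity $d\phi(X) = \beta(X)$ on the $\phi(K)$-orbit of $v$, where $\beta := \alpha_\omega+\Psi^*\colon\fk\to\mathrm{End}(\mathbb{V})$. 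The maximal subspace of $\mathbb{V}$ on which $\beta$ is a Lie algebra homomorphism integrating to a $K$-representation thus realises $\mathbb{G}$ concretely, and $\pi$ is the restriction of $\Pi$ to this subspace.

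The main obstacle is precisely this algebraic extraction. One must show that the obstruction $\beta([X,Y])-[\beta(X),\beta(Y)]$, which equals the pullback to $\fk\wedge\fk$ of the curvature of $\tilde\nabla$, vanishes on the prospective $\mathbb{G}$, and that the resulting $\fk$-representation integrates to the Lie group $K$. Under the compatibility $\beta|_{\fh} = d\alpha$, integrability is automatic on the identity component of $K$ and extends to $K$ by $H$-equivariance; once this algebraic set-up is in place, the equivariance of $\pi$, the well-definedness of $\mu$, and the identity $X^L f_v = -\beta(X) f_v$ for $f_v(k) = \phi(k)^{-1}v$ all reduce to direct algebraic computations with $\alpha$, $\alpha_\omega$ and $\Psi^*$ at the base point.
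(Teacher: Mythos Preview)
Your approach is correct and rests on the same machinery as the paper: the tractor bundle, the prolongation connection $\tilde\nabla=\nabla+\Psi$, the bijection between BGG solutions and $\tilde\nabla$--parallel sections, and the reduction to an invariant connection determined by a linear map $\fk\to\frak{gl}(\mathbb{V})$.

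The organisation, however, is inverted. The paper proves the local statement first (Theorem~\ref{thm-loc}): it writes $\nabla^{\Phi}=D^\fk+\Phi$ with $\Phi=\rho\circ\alpha+\Psi$, then computes the space $\mathbb{S}$ of local parallel sections by the explicit infinitesimal--holonomy iteration $\mathbb{S}^0\supset\mathbb{S}^1\supset\cdots$ (annihilation by the curvature $R^{\Phi}$, followed by $\Phi$--stability). Only afterwards does it pass to the global statement by taking $\mathbb{G}\subset\mathbb{S}$ to be the maximal $\fk$--subrepresentation that integrates to $K$. You instead obtain $\mathbb{G}$ directly as the $K$--representation on global $\tilde\nabla$--parallel sections, using that $\Psi$ is built naturally from the curvature and hence is $K$--invariant; the algebraic description via $\beta=\alpha_\omega+\Psi^*$ comes second. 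Your route gives the existence claim of Theorem~\ref{thm-intro} more quickly, while the paper's route is what yields the actual invariant calculus (the iterative algorithm for $\mathbb{S}$) that is the point of the article.

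One small technical point: you fix $G=PO(p+1,q+1)$ at the outset, but then $\mathbb{V}$ must be a $PO(p+1,q+1)$--module, which excludes e.g.\ the spinor representations. The paper allows $G$ to vary with $\rho$ and absorbs the resulting topological obstruction (existence of $\iota\colon H\to P$) into the local argument, noting that integrability of $d\mu$ on $\mathbb{X}$ is guaranteed by the assumed existence of the bundle $\mathcal{X}$. Your argument goes through once you make the same adjustment.
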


In fact, we prove in Theorem \ref{thm-loc} a local version of this theorem that provides representations $\Phi: \fk\to \frak{gl}(\mathbb{S})$ and $d\mu: \fh\to \frak{gl}(\mathbb{X})$ and an $\fh$--equivariant projection $\pi: \mathbb{S}\to \mathbb{X}$ describing the set $\mathbb{S}$ of  local solutions of the first BGG operators, where $\fk,\fh$ are the Lie algebras of $K, H$, respectively. The maximal subrepresentation $\mathbb{G}\subset \mathbb{S}$ of $\fk$ that can be integrated to a representation $\phi$ of $K$ then provides the global solutions.

We summarize in Section \ref{section3.3} an algorithm for computing of the solutions of the first BGG operators using Theorems \ref{thm-intro} and \ref{thm-loc}. Let us emphasize that the algorithm can be straightforwardly implemented using a computer algebra system and in fact, we use Maple for our computations. Let us present here several key facts.

The projection $\pi$, as we recall in  Section \ref{sec3}, comes from the tractorial approach to BGG operators, \cite{H-srni,GC,thomas}, where each first BGG operator is encoded by an (irreducible) representation $$\rho: \so\to \frak{gl}(\mathbb{V}).$$
Then $\mathbb{X}$ can be identified with the $\frak{co}(p,q)$--module of the lowest weight in $\mathbb{V}$ that is usually called the projective slot. Since the sets of solutions are identified with subsets $\mathbb{G}\subset \mathbb{S}\subset \mathbb{V}$, it follows that $\pi$ is induced by the natural projection $\mathbb{V}\to \mathbb{X}.$

The representations $\Phi$ and $d\mu$, as we show in the proof of Theorem \ref{thm-loc}, can be algebraically computed. The data we use in the computation are the representation $\rho$ and a linear map $$\alpha: \fk\to \so$$ called conformal extension that can be associated with each homogeneous conformal structure with signature $(p,q)$. In particular, the map $\alpha$ restricted to $\fh$ is a Lie algebra homomorphism $d\iota: \fh\to \fp$, where $\fp$ is the Poincar\' e Lie subalgebra $\fp$ of $\so$. Since the representation $\rho|_\fp$ can be restricted to $\mathbb{X}$, we obtain $d\mu=\rho|_\fp\circ d\iota$. 
Let us remark that the tractorial approach to BGG operators in the global setting involves topological obstructions requiring the existence of the Lie group $P$ with the Lie algebra $\fp$ for which both the Lie algebra homomorphisms $\rho|_\fp$ and $d\iota$ can be integrated. In our approach, the integrability of $d\mu$ follows from the existence of the bundle $\mathcal{X}$ and we just need to discuss the relation of the sets $\mathbb{G}$ and $\mathbb{S}$.

In Section \ref{section2} we introduce conformal extensions. We describe how to find the conformal extension associated with a homogeneous conformal structure in Proposition \ref{conf_ext}. We show in Sections \ref{loc_coord} and \ref{cartan} that conformal extensions provide a complete local description of both the conformal geometry and the associated Cartan geometry, \cite{parabook}. In particular, there is a distinguished complement $\fc$ of $\fh$ in $\fk$ that determines suitable exponential coordinates 
$${\sf c}: \fc \to M$$ 
and a coframe $e^*: T\fc\to \R^n$ that provides this local description explicitly. We illustrate this on the example of the conformal class given by the famous G\"odel metrics, \cite{Godel}.

We prove our main results in Section \ref{sec3}.  In Section \ref{section3.4} we discuss in detail how to use the exponential coordinates ${\sf c}$ and the coframe $e^*$ to describe the bundle isomorphism $(\mathcal{X}\to M)\cong (K\times_{\mu(H)} \mathbb{X}\to K/H)$ and the solutions of the first BGG operators in the local coordinates. We illustrate this in Proposition \ref{prop-sol-godel} by computing  the solutions of several first BGG operators for the conformal class of the G\"odel metrics. 

In Sections \ref{section4} and \ref{section5}, we compute solutions and normal solutions of the first BGG operators on further examples and also discuss several  of their applications.
In Section \ref{section4}, we discuss  holonomy reductions provided by normal solutions of first BGG operators, \cite{hol}. We provide three particular examples which are conformal classes of submaximally symmetric pp--wave of signature $(1,3)$, \cite{submax,kundt}, non--reductive version of G\"odel metric, and an invariant metric on the Lie group $Gl(2,\R).$ In the pp--wave case, we describe all normal solutions for all first BGG operators and determine the holonomy reductions induced by Einstein scales, twistor spinors, and normal conformal Killing fields. In the case of non--reductive G\"odel, we also describe all normal solutions for all  first BGG operators and determine the holonomy reductions induced by twistor spinors and normal conformal Killing fields. In the remaining case, we discuss the holonomy reduction provided by a normal conformal Killing field that relates this example to CR geometry, \cite{CG-Fefferman}.

In Section \ref{section5}, we discuss the application of conformal Killing--Yano $2$--forms for finding conserved quantities on conformal circles, \cite{BE,conserved,DT}. We  consider two examples which are conformal structure on the $3$--dimensional Heisenberg group, called $Nil$ in \cite{Tod}, and a symmetric space with the split version of the Fubini--Study metric. We compute conformal Killing--Yano $2$--forms on them, determine the conserved quantities, and discuss how to use them to obtain an explicit description of  the conformal circles.

Since the formulas for first BGG operators in local coordinates are not available in the literature in many cases, we describe in Appendix \ref{bgg-const} how to find such a  formula in the exponential coordinates ${\sf c}$. We also provide in  Appendix \ref{apendix} an explicit example of computation of the prolongation connection that is part of the algorithm from Section \ref{section3.3}.

\subsection*{Notation}

Let us fix some conventions that we use throughout the article. We fix the scalar product $\bg$ on $\mathbb{T}:=\R^{n+2}$, $n=p+q$, $p\leq q$, given by the $(1,p,q-p,p,1)$--block matrix 
$$
\bg=\left[ \begin{smallmatrix}
 0 & 0 & 0 & 0 & 1 \\
 0 & 0 & 0 & I_p & 0 \\
 0 & 0 & I_{q-p} & 0 & 0 \\
 0 & I_p & 0 & 0 & 0 \\
 1 & 0 & 0 & 0 & 0 
\end{smallmatrix} \right],$$
where $I_r$ denotes the identity matrix of order $r$.
We fix the Lie algebra $\so$ as the Lie subalgebra of $\frak{gl}(\mathbb{T})$ preserving $\bg$, i.e., $\so$ consists of elements
\begin{align}\label{blockso} \tag{SO}
\left[ \begin{smallmatrix}
 a & b & c & d & 0 \\
 u & A & E & F & -d^t \\
 v & C & B & -E^t & -c^t \\
 w & D & -C^t & -A^t & -b^t \\
 0 & -w^t & -v^t & -u^t & -a 
\end{smallmatrix} \right] ,
\end{align}
where $a \in \R$, $A \in \frak{gl}(p,\R)$, $B \in \frak{so}(q-p)$, $C, E^t \in \R^{p*} \otimes \R^{q-p}$, $ D,F \in \frak{so}(p)$, $u,w,b^t,d^t \in \R^p$ and $v,c^t \in \R^{q-p}$. 
%We denote by $\{,\}$ the Lie bracket on $\so$. 
%The reasons for our conventions are as follows.
We have several reasons for this convention and summarize them as follows.
\noindent \\[1mm] {$\bullet$}
The subalgebra $\R^n\subset \so$ consisting of $(u,v,w)$--parts of \eqref{blockso} together with the scalar product $$
\nu_{p,q}=\left[ \begin{smallmatrix}
  0 & 0 & I_p  \\
  0 & I_{q-p} & 0 \\
  I_p & 0 & 0  \\
\end{smallmatrix} \right]$$
provides a linear model of conformal geometries, i.e., conformal coframes are (local) maps $TM\to \R^n$ mapping the conformal class $[g]$ to functional multiples of $\nu_{p,q}$. %Note that $u$ corresponds to a maximal null--space in $\R^n$.
%\noindent \\[1mm] {$\bullet$}
% Conformal frames are then (local) maps $\R^n\to TM$ such that some metric in conformal class $[g]$ has coordinates $\nu_{p,q}$ in the frame. 
\noindent \\[1mm] {$\bullet$}
 The subalgebra $\frak{co}(p,q)\subset \so$ consisting of $(a,A,B,C,D,E,F)$--parts of \eqref{blockso} is the Lie algebra of the conformal group $CO(p,q)$ preserving the scalar product $\nu_{p,q}$ up to  multiples.
% and thus, the bundle of the above conformal frames is a $CO(p,q)$--structure on $M$.
\noindent \\[1mm] {$\bullet$}
 The remaining subalgebra $\R^{n*}\subset \so$ consisting of $(b,c,d)$--parts of \eqref{blockso} corresponds to the first prolongation of $\frak{co}(p,q)$. Note that the duality between $\R^n$ and $\R^{n*}$ is provided by the Killing form of $\so.$
\noindent \\[1mm] {$\bullet$}
The diagonal  of $\so$  forms a Cartan subalgebra and the corresponding root spaces are compatible with \eqref{blockso}. In particular, all negative real root spaces are below the diagonal and positive ones are above the diagonal and $\fp:=\frak{co}(p,q)\oplus \R^{n*}$ is the Poincar\'e (parabolic) algebra.
\noindent \\[1mm] 
 Since there will be formulas involving simultaneously Lie brackets from $\so$ and other Lie algebras, we use the notation $\{\:,\:\}$ for the Lie bracket on $\so$ and the notation $[\:,\:]$ for the Lie bracket on  other Lie algebras in question.
 
Further, we denote by $\bigodot^k_0 \mathbb{T}$ the highest weight component of the symmetrized product of $k$ copies of $\mathbb{T}$, and by $\wedge^k \mathbb{T}$ the skew--symmetrized product of $k$ copies of $\mathbb{T}$. We denote by $\mathbb{D}$ or $\mathbb{D}^{\pm}$ the spin or half--spin representations of $\so$, respectively. For a general tensor product $\mathbb{V}$ of such  representations, we denote by $\boxtimes(\mathbb{V})$ the highest weight (Cartan) component in this tensor product.

The restriction of $\rho: \so\to \frak{gl}(\mathbb{V})$ to $\fp$ induces a filtration $\mathbb{V}^i$ of $\mathbb{V}$, where $i$ are the half--integers determined by the action of the grading element which is given by $(1,n,1)$--block matrix 
\[
E:=\left[ \begin{smallmatrix}
1&0&0\\
0&0&0\\
0&0&-1
\end{smallmatrix} \right].
\]
Precisely, the eigenspaces $\mathbb{V}_j=\{X\in \mathbb{V}: \rho(E)(X)=jX\}$ define an associated grading $\mathbb{V}_j$ to the filtration $\mathbb{V}^j=\bigoplus_{k\geq j} \mathbb{V}_k$. Each $\mathbb{V}_j$ is a $\frak{co}(p,q)$--representation and, if $\rho$ is irreducible, then $\mathbb{X}$ is irreducible $\frak{co}(p,q)$--representation $\mathbb{V}_j$ with minimal $j$. Such  $\mathbb{X}$ is a tensor product of an irreducible representation of $\frak{so}(p,q)$ and one--dimensional $(-w)$--eigen representation $\R[w]$ of $E$.

Finally, the line bundle $\mathcal{E}[w]$ of conformal densities of weight $w$ is (if it exists) the $(-\frac{w}{n})$--th power of $\wedge^n T^*M$.

\section{Homogeneous conformal structures} \label{section2}

\subsection{Description of homogeneous conformal structures} 

Classically, a $K$--invariant conformal structure $[g]$ of signature $(p,q)$ on $M=K/H$ is encoded in a non--degenerate element 
\[g_o\in \bigodot^2 \fk/\fh^*\]
of signature $(p,q)$ that is preserved by the isotropy action of the stabilizer $H$ of $o\in M$ up to a positive multiple. Then a linear isomorphism $$\alpha_{-1}: T_oM=\fk/\fh\to \R^n$$ such that $g_o=\alpha_{-1}^*\nu_{p,q}$ induces a Lie group homomorphism $$\iota_0: H\to CO(p,q)$$
and identifies the bundle of conformal frames (i.e., the $CO(p,q)$-structure on $M$) with the bundle $K\times_{\iota_0(H)} CO(p,q)$. Thus the map $\alpha_{-1}$ contains all the information about the conformal geometry, but to encode all the information we need, we extend $\alpha_{-1}$ in the following way.

\begin{prop}\label{conf_ext}
Let $(M,[g])$ be a $K$--homogeneous conformal geometry  and  $H\subset K$ a stabilizer of a point $o\in M$. Let $\alpha_{-1}: T_oM=\fk/\fh\to \R^n$ be a linear isomorphisms such that $\alpha_{-1}^*\nu_{p,q}\in [g]_o$. Then there is linear map $\alpha: \fk\to \so$ such that
\begin{enumerate}
\item  $\{\alpha(Y_1),\alpha(Y_h)\}-\alpha([Y_1,Y_h])=0$ for all $Y_1\in \fk,Y_h\in \fh$,
\item $\alpha(\fk/\fh)=\so/\fp$ and moreover, the restriction of $\alpha$ to a map $\fk/\fh\to \R^n$ along $\fp$ coincides with $\alpha_{-1}$,
\item the curvature
\begin{align*}
\kappa(\alpha(Y_1)+\fp,\alpha(Y_2)+\fp):=\{\alpha(Y_1),\alpha(Y_2)\}-\alpha([Y_1,Y_2])
\end{align*}
in $\wedge^2 \R^{n*}\otimes \so$ satisfies the normalization condition
\begin{align}\label{norm}\tag{Nor}
 \sum_i \{Z_i,\kappa(\alpha(Y)+\fp,\alpha(X_i)+\fp)\}=0
\end{align}
 for all $Y\in \fk$, where the elements $X_i\in \fk$ are representatives of a basis $\alpha_{-1}(X_i)$ of $\R^n$ and the elements $Z_i\in \R^{n*}$ form the corresponding dual basis.
\end{enumerate}
\end{prop}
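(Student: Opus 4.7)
The plan is to build $\alpha$ in steps aligned with the $|1|$--grading $\so=\R^n\oplus \frak{co}(p,q)\oplus \R^{n*}$, first making the choices forced by (2) and by the isotropy representation, and then exploiting the residual freedom to impose the normalization (3) while preserving (1). This is the standard construction of a regular normal Cartan connection for a conformal structure from \cite{parabook}, specialized at the point $o$ in the homogeneous setting.

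First, I would pick a vector--space complement $\fc$ of $\fh$ in $\fk$ and define the $\R^n$--component of $\alpha$ to equal $\alpha_{-1}$ on $\fc$ and zero on $\fh$, which is forced by (2). Since $g_o$ is preserved up to scale by the isotropy action of $H$, conjugation by $\alpha_{-1}$ turns the isotropy into a Lie group homomorphism $\iota_0: H\to CO(p,q)$, and I set the $\frak{co}(p,q)$--component of $\alpha|_\fh$ to equal $d\iota_0$. Condition (1) with $Y_1,Y_h\in \fh$ then reduces to the Lie algebra homomorphism property of $d\iota_0$, while for $Y_1\in \fc$ it becomes the defining equivariance of $\alpha_{-1}$ with respect to the isotropy action on $\fk/\fh$.

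Second, I would use the remaining freedom to adjust $\alpha$ on $\fc$ by an $\fh$--equivariant linear map $\phi:\fc\to \fp$. Working order--by--order in the grading, this changes the curvature by $\partial\phi$, where $\partial$ denotes the Chevalley--Eilenberg differential of the abelian algebra $\R^n$ with coefficients in $\so$; the $\phi$--quadratic correction is handled by iteration. The normalization (3) is exactly the requirement $\partial^*\kappa=0$ for the Kostant codifferential $\partial^*:\wedge^2\R^{n*}\otimes \so\to \R^{n*}\otimes \so$. By Kostant's Hodge decomposition of $\wedge^2\R^{n*}\otimes \so$, the composition $\partial^*\partial$ is invertible on the image of $\partial^*$, so I can solve $\partial^*\partial\phi=\partial^*\kappa_{\mathrm{old}}$ uniquely in that complement at each of the two homogeneity levels in $\fp$. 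Because $\so$ is $|1|$--graded, the iteration terminates after the $\R^{n*}$--level correction.

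The main obstacle I anticipate is ensuring that the correction $\phi$ is genuinely $\fh$--equivariant, so that (1) survives the normalization step. This follows from the $\fp$--equivariance of $\partial$ and $\partial^*$ combined with $d\iota_0(\fh)\subset \fp$: the normalization equation is $\fh$--equivariant, and the Hodge pseudo--inverse used to extract $\phi$ is equivariant as well, so the unique solution transforms correctly under the adjoint action of $\fh$. Putting everything together yields the desired $\alpha$.
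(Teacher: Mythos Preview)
Your outline follows the same strategy as the paper: both cite the general construction of the normal conformal Cartan connection from \cite{parabook} and then make it explicit by working through the $|1|$--grading $\so=\R^n\oplus\frak{co}(p,q)\oplus\R^{n*}$, fixing $\alpha_{-1}$ and $d\iota_0$ first and then correcting by a $\fp$--valued map to impose \eqref{norm}. The paper phrases the correction as solving explicit linear systems homogeneity by homogeneity, while you phrase it via Kostant's Hodge pseudo--inverse; these are two descriptions of the same mechanism.

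There is one substantive step in the paper that you skip. The paper does \emph{not} keep $\alpha|_\fh=d\iota_0$; it first extends $d\iota_0$ to $d\iota:\fh\to\fp$ by adding a component $\fh\to\R^{n*}$ chosen so that the $\frak{co}(p,q)$--part of $\{\alpha_{-1}(Y_1),d\iota(Y_h)\}-(\alpha_{-1}+d\iota)([Y_1,Y_h])$ vanishes. This matters precisely in the non--reductive case: if $[Y_h,Y_1]$ has a nonzero $\fh$--component for $Y_1\in\fc$, then your initial map $\alpha_{-1}+d\iota_0$ already fails condition~(1) at the $\frak{co}(p,q)$--level (the obstruction is $d\iota_0([Y_1,Y_h]_\fh)$). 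Consequently your starting curvature is not an $\fh$--equivariant element of $\wedge^2\R^{n*}\otimes\so$, and the sentence ``the normalization equation is $\fh$--equivariant \dots\ so the unique solution transforms correctly'' does not apply as written: $\fp$--equivariance of $\partial,\partial^*$ only propagates equivariance, it does not create it from non--equivariant input. The paper's extra step removes this obstruction before normalizing (and then imposes the remaining equivariance and the $\frak{co}(p,q)$--part of \eqref{norm} simultaneously when choosing $\alpha_0$). If you want to keep $\alpha|_\fh=d\iota_0$, you must instead first pick a preliminary $\alpha_0:\fc\to\frak{co}(p,q)$ satisfying the affine equivariance $\{\alpha_0(Y_1),d\iota_0(Y_h)\}-\alpha_0([Y_1,Y_h]_\fc)=d\iota_0([Y_1,Y_h]_\fh)$, and only then invoke the Hodge pseudo--inverse; your argument is complete once this is added. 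In the reductive case (e.g.\ the G\"odel example) the obstruction vanishes and your argument goes through verbatim, which is why the issue is easy to miss.
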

\begin{proof}
The existence of such $\alpha$ follows from \cite[Theorem 1.5.15]{parabook} and \cite[Theorem 1.6.7]{parabook}. We provide here an explicit construction of such $\alpha$ we will use later. Let us emphasize that all the conditions listed below lead to linear equations that can be always solved.

We pick a complement $\fc$ of $\fh$ in $\fk$ and get a map $\alpha_{-1}: \fc\to \R^n\subset \so$. Without loss of generality, we can assume $d\iota_0: \fh\to \frak{co}(p,q)\subset \so$ is injective because otherwise, there is a  conformal Killing field with higher--order fixed point which makes the conformal geometry flat. Then $\fk$ is a Lie subalgebra of $\so$ and $\alpha$ is just the inclusion. This altogether provides an associated graded map 
$$\alpha_{-1}+d\iota_0: \fk=\fc\oplus \fh\to \so.$$
Next, we extend $d\iota_0$ to a map $d\iota: \fh\to \fp$ by  a linear map $\fh\to \R^{n*}$ such that the component of $$\{\alpha_{-1}(Y_1),d\iota(Y_h)\}-(\alpha_{-1}+d\iota)([Y_1,Y_h])$$ in $\frak{co}(p,q)$ vanishes. This provides an injective Lie algebra homomorphism $d\iota: \fh\to \fp$ that satisfies  $\{\alpha_{-1}(Y_1),d\iota(Y_h)\}-(\alpha_{-1}+d\iota)([Y_1,Y_h])=0$ modulo $\R^{n*}$ for all $Y_1\in \fk,Y_h\in \fh$.

Further, there is a linear map $\alpha_{0}: \fc\to \frak{co}(p,q)$ such that $$\{(\alpha_{-1}+\alpha_{0})(Y_1),d\iota(Y_h)\}-(\alpha_{-1}+\alpha_0+d\iota)([Y_1,Y_h])=0$$ for all $Y_1\in \fk,Y_h\in \fh$ and such that the component of the normalization condition  \eqref{norm} in $\frak{co}(p,q)$ computed for 
for $\alpha_{-1}+\alpha_0+d\iota$ instead of $\alpha$ vanishes. Note that $\alpha_0$ is not unique and one can usually observe some freedom depending on the $a$--part in the block description \eqref{blockso} of $\alpha_0$ which we can fix arbitrarily without loss of generality.

Finally, we add a linear map $\alpha_{1}: \fc\to \R^{n*}$ such that the component in $ \R^{n*}$ of normalization condition \eqref{norm} vanishes for $\alpha:=\alpha_{-1}+\alpha_0+\alpha_1+d\iota$. Such $\alpha$ then satisfies all the conditions (1),(2),(3) of the statement.
\end{proof}

Based on the above proposition, we adopt the following terminology.

\begin{def*}
We call a linear map $\alpha: \fk\to \so$ a \emph{conformal extension} (of $(\fk,\fh)$) if it satisfies the conditions (1),(2) of the Proposition \ref{conf_ext}. We call the tensor $\kappa$ from point (3) of the proposition the \emph{curvature of the conformal extension} $\alpha$ and we say that the conformal extension is \emph{normal} if the normalization condition \eqref{norm} is satisfied.
\end{def*}

For a conformal extension $\alpha$ of $(\fk,\fh)$, the restriction of $\alpha$ to a map $\alpha_{-1}: \fk/\fh\to \R^n$ along $\fp$ provides $g_o:=\alpha_{-1}^*\nu_{p,q} \in \bigotimes^2 \fk/\fh^*$. If $K,H$ are  Lie groups  with Lie algebras $\fk,\fh$ and $H$ is a closed subgroup of $K$, then the component of identity in $H$ preserves $g_o$ up to a multiple. Thus if also the other connected components in $H$ preserve $g_o$ up to a multiple, then $K/H$ carries a homogeneous conformal structure associated with the conformal extension $\alpha$.

\subsection{Conformal extension for G\" odel metric}\label{ext_godel}
Let us consider conformal class given by the famous G\" odel metrics, \cite{Godel}, on a manifold $M$ with coordinates $(t,x,y,z)$ that is either $\R^4$ or $S^1\times \R^3$ with $t {\rm \ mod\ } 4\sqrt{2}\pi$. We pick the following representative G\" odel metric
$$g=-(dt+e^x dy)^2+dx^2+{\frac12 }e^{2x}dy^2+dz^2.$$
The Lie algebra of  conformal Killing fields of $[g]$ is generated by the time translation $\partial_t$, two space translations $\partial_y, \partial_z$ and further two vector fields $\partial_x-y\partial_y$ and $ (2-2e^{-x})\partial_t+y\partial_x-(1+\frac{y^2}{2}-e^{-2x})\partial_y.$ The flows of the first four vector fields act transitively on both  $\R^4$ and $S^1\times \R^3$. The last vector field vanishes at the origin $o=(0,0,0,0)$ and is complete only on $S^1\times \R^3$.

Let us fix the following Killing fields
$$
k_1:=\partial_z+\partial_t, \ \ k_2:=\partial_x-y\partial_y, \ \ k_3:=\sqrt{2}(\partial_y-\partial_t), \ \ k_4:=-\frac12\partial_t +\frac12 \partial_z,
$$
because these induce a linear isomorphism $\alpha_{-1}: T_oM=\fk/\fh\to \R^4$ such that $$g_o=\alpha_{-1}^*\nu_{1,3}=\alpha_{-1}^*\left[ \begin{smallmatrix}
0& 0 &0&1 \\ 
0&1& 0&0 \\
0&0&1&0 \\
1 &0& 0&0
\end{smallmatrix} \right].$$
Further, we choose $h_1:=(2-2e^{-x})\partial_t+y\partial_x-(1+\frac{y^2}{2}-e^{-2x})\partial_y$ in the stabilizer which allows us to parametrize $\fk$ as $\fk=x_1k_1+x_2k_2+x_3k_3+x_4k_4+x_5h_1$, where the Lie bracket in $\fk$ is {\it minus} the Lie bracket of the corresponding vector fields, i.e., 
$$
[k_2,k_3]=\frac{1}{\sqrt{2}}k_1+k_3-\sqrt{2}k_4,\ \  [k_2, h_1]= -h_1-\sqrt{2}k_3,\ \  [k_3, h_1] = \sqrt{2}k_2.
$$

\begin{lem} \label{ext-godel}
The normal conformal extension $\alpha:\fk \to \sodc$ corresponding to the G\"odel metric according to Proposition \ref{conf_ext} is $\alpha(x_1,x_2,x_3,x_4,x_5)=$
$$
 \left[ \begin {smallmatrix} 0&-\frac{1}{2}{ x_1}+\frac{1}{6}{ x_4}&-\frac{1}{12}{
 x_2}&-\frac{1}{12}{ x_3}&\frac{1}{6}{ x_1}-\frac{1}{8}{ x_4}&0
\\ 
{ x_1}&0&\frac{\sqrt {2}}{4}{ x_3}&-\frac{\sqrt {2}}{4}
{ x_2}&0&-\frac{1}{6}{ x_1}+\frac{1}{8}{ x_4}\\ { x_2}&\frac{\sqrt {2}}{2}{ x_3}&0&\frac{\sqrt {2}}{2}{ x_1}-{ x_3}-\frac{\sqrt {2}}{4}
{ x_4}+\sqrt {2}{ x_5}&-\frac{\sqrt {2}}{4}{ x_3}&\frac{1}{12}{ x_2}
\\ 
{ x_3}&-\frac{\sqrt {2}}{2}{ x_2}&-\frac{\sqrt {2}}{2}{
 x_1}+{ x_3}+\frac{\sqrt {2}}{4}{ x_4}-\sqrt {2}{ x_5}&0&\frac{\sqrt {2}}{4}
{ x_2}&\frac{1}{12}{ x_3}\\ { x_4}&0&-\frac{1}{2}
\sqrt {2}{ x_3}&\frac{\sqrt {2}}{2}{ x_2}&0&\frac{1}{2}{ x_1}-\frac{1}{6}{ x_4}
\\ 
0&-{ x_4}&-{ x_2}&-{ x_3}&-{ x_1}&0
\end {smallmatrix} \right] 
$$
with the curvature $\kappa(\alpha(x_1,x_2,x_3,x_4,x_5),\alpha(y_1,y_2,y_3,y_4,y_5))=$
$$
 \left[ \begin {smallmatrix} 
0&-\frac{\sqrt {2}}{2} z_{23} &
- \frac{\sqrt {2}}{4}z_{13}
-\frac{\sqrt {2}}{8}z_{34}
&\frac{\sqrt{2}}{4}z_{12}+\frac{\sqrt{2}}{8}z_{24} &\frac{\sqrt {2}}{4} z_{23} &0
\\ 
0&\frac13 z_{14}&-\frac{1}{6}z_{12}&
-\frac{1}{6}z_{13}&0&-\frac{\sqrt {2}}{4}
 z_{23}
\\
0&-\frac{1}{6}z_{24}&0&\frac13z_{23}&\frac{1}{6}z_{12}&\frac{\sqrt {2}}{8}z_{34}+\frac{\sqrt {2}}{4}
z_{13}
\\ 
0&-\frac{1}{6}z_{34}&\frac13z_{23}&0&\frac{1}{6}z_{13}&-\frac{\sqrt {2}}{8}z_{24}-\frac{\sqrt {2}}{4}z_{12}
\\ 
0&0&\frac{1}{6}z_{24}&\frac{1}{6}z_{34}&-\frac13z_{14}&\frac{\sqrt {2}}{2} z_{23}
\\ 
0&0&0&0&0&0
\end {smallmatrix} \right],
$$
where we write $z_{ij}=x_iy_j-x_jy_i.$
\end{lem}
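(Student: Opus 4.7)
I would prove this by implementing, in this concrete setting, the explicit construction spelled out in the proof of Proposition \ref{conf_ext}. Every condition that determines $\alpha$ is a linear equation on the entries of its matrix, so for the G\"odel data these equations have a unique solution once the single scaling ambiguity in $\alpha_0$ is fixed.

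First I would place the translational data: the choice of $k_1,\dots,k_4$ was calibrated so that $\alpha_{-1}$ identifies $(k_1,k_2,k_3,k_4)$ with the standard basis of $\R^4$ under which $g_o$ becomes $\nu_{1,3}$. This fills in the $(u,v,w)$-entries of the rows corresponding to $x_1,\dots,x_4$ of the matrix $\alpha$ in the statement. Next I would compute $d\iota(h_1)$: using $[k_1,h_1]=0$, $[k_2,h_1]=-h_1-\sqrt{2}k_3$, $[k_3,h_1]=\sqrt{2}k_2$, $[k_4,h_1]=0$, condition (1) of Proposition \ref{conf_ext} becomes a linear system on the blocks of $\alpha(h_1)$. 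Solving it in two stages---first in $\frak{co}(1,3)$, then in $\R^{4*}$---determines $d\iota(h_1)$ and hence the $x_5$-column of $\alpha$.

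Third, I would determine $\alpha_0$ and $\alpha_1$ on the complement $\fc=\langle k_1,k_2,k_3,k_4\rangle$. The equivariance conditions (1) coupling $\alpha(k_i)$ with $\alpha(h_1)$ constrain the $(A,B,C,D,E,F)$-entries, and then the normalization \eqref{norm} projected onto $\frak{co}(1,3)$ supplies the rest of $\alpha_0$; projecting \eqref{norm} onto $\R^{4*}$ determines $\alpha_1$. The resulting system has a one-parameter family of solutions coming from the $a$-entry of $\alpha_0$, which I would set to zero as the proposition permits. The outcome is exactly the matrix displayed in the lemma.

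Finally, the curvature is computed by substituting the assembled $\alpha$ into $\kappa(X,Y)=\{\alpha(X),\alpha(Y)\}-\alpha([X,Y])$. Condition (1) forces $\kappa$ to vanish whenever one argument lies in $\fh$, so only the six antisymmetric combinations $z_{ij}=x_iy_j-x_jy_i$ with $1\le i<j\le 4$ survive; evaluating the commutators of the corresponding matrices in $\sodc$ and subtracting $\alpha$ of $[k_i,k_j]$ (which contributes only through $[k_2,k_3]$) yields the tabulated formula. The whole argument is elementary linear algebra in the block form \eqref{blockso}; the only genuine obstacle is the bookkeeping of five blocks, sign conventions, and the convention that the bracket on $\fk$ is \emph{minus} the bracket of vector fields, and I would carry the computation out in a computer algebra system as described in Section \ref{section3.3}.
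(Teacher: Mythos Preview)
Your proposal is correct and follows essentially the same route as the paper: set up $\alpha_{-1}$ from the chosen basis, determine $d\iota(h_1)$ from condition (1), then solve for $\alpha_0,\alpha_1$ using equivariance and the normalization condition with $a=0$, and finally evaluate $\kappa$ from its definition. The only point the paper adds that you do not mention is the observation that the decomposition $\fc\oplus\fh$ is reductive, so $d\iota=d\iota_0$ automatically and the $\R^{4*}$-stage for $h_1$ is trivially zero; your two-stage solve would of course reveal this anyway.
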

\begin{proof}
Elements $k_1,k_2,k_3,k_4$ span a complement $\fc$ of $\fh$ in $\fk$ and $\alpha_{-1}(\sum_{i=1}^4 x_ik_i)=(x_1,x_2,x_3,x_4)^t\in \R^4$. This induces the given $d\iota_0(x_5h_1)$. Then, as in the proof of Proposition \ref{conf_ext}, we compute that $d\iota=d\iota_0$ (because the decomposition $\fc\oplus \fh$ is reductive) and compute the given $\alpha_0,\alpha_1,$ where we choose    $a=0$ for the $a$--part in \eqref{blockso} of $\alpha_0$. The curvature is computed directly by definition.
\end{proof}

\subsection{Homogeneous conformal structures in local coordinates} \label{loc_coord}

 We would like to employ suitable exponential coordinates for our computations and therefore, we shall consider a decomposition of the Lie algebra $\fk$ to subalgebras where the exponential map can be easily computed. In general, one can always consider
 the Levi decomposition of $\fk$ and the Iwasawa decomposition of a maximal semisimple subalgebra of $\fk$, \cite[Section 2.3.5]{parabook}. We group these decompositions into a decomposition $\fk=\fl\oplus  \fa \oplus  \fn$, where

\begin{enumerate}
\item $\fl$ is a maximal compact subalgebra of a maximal semisimple subalgebra of $\fk$,
\item $\fa$ is a maximal diagonalizable subalgebra  (over real numbers) of $\fk$,
\item $\fn$ is the sum of the remaining nilpotent part of the Iwasawa decomposition of a maximal semisimple subalgebra of $\fk$ and the remaining part of the radical of $\fk$ that is not diagonalizable subalgebra  (over real numbers).
\end{enumerate}
Then, we can find (using simple linear algebra and root space decomposition of the maximal semisimple subalgebra of $\fk$) a complement  $\fc$ of $\fh$ in $\fk$ with the following properties

\begin{enumerate}
\item[C1] $\fc=(\fc\cap \fl) \oplus (\fc\cap \fa)\oplus (\fc\cap \fn)$ with dimension $\fc\cap \fl$ minimal possible in the case we are interested in local properties or with dimension $\fc\cap \fl$ maximal possible in the case we are interested in global properties,
\item[C2] each element $\fc\cap \fl$ can be written as a sum $L_{1}+\dots +L_{j}$ for some basis $L_i$ of $\fl$ such that $\exp(L_i)$ can be \emph{easily} computed.
\end{enumerate}

\begin{def*}
We say that ${\sf c}: \fc\to M$ are \emph{exponential coordinates compatible with the decomposition} $\fk= \fl\oplus  \fa \oplus  \fn$ if $\fc$ is a complement of $\fh$ in $\fk$ satisfying conditions C1 and C2, and ${\sf c}$ is defined by
\[
{\sf c}(X) := \exp(L_{1})\dots \exp(L_{j})\exp(X_{\fa})\exp(X_{\fn})o
\]
for $X=L_{1}+\dots +L_{j}+X_{\fa}+X_{\fn}\in (\fc\cap \fl) \oplus (\fc\cap \fa)\oplus (\fc\cap \fn)$. We denote by $\tilde{\sf c}: \fc\to K$ the corresponding natural lift 
$$\tilde{\sf c}(X)=\exp(L_{1})\dots \exp(L_{j})\exp(X_{\fa})\exp(X_{\fn}).$$
\end{def*}

As the first application, these exponential coordinates allow us to (locally) construct the conformal class with prescribed associated conformal extension.

\begin{prop}\label{c-coframe}
Let $\alpha: \fk\to \so$ be a conformal extension of $(\fk,\fh)$ and let $\fc$ be a complement of $\fh$ in $\fk$ satisfying conditions C1, C2. If $e^*:=(e^1, \dots, e^n): T\fc\to \R^n$ is a coframe given by $\alpha_{-1}\circ \tilde{\sf c}^*\omega_K$ for the Maurer--Cartan form $\omega_K$ of some Lie group $K$ with the Lie algebra $\fk$, then the conformal class $[e^*\nu_{p,q}(e^*)^t]$ on $\fc$ is locally homogeneous conformal geometry with associated conformal extension $\alpha$.
\end{prop}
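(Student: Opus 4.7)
The strategy is to construct the $K$--invariant conformal class $[g]$ on a neighborhood of $o \in M = K/H$ associated with $\alpha$, and then show that the exponential coordinates ${\sf c}:\fc\to M$ pull back $[g]$ to $[e^*\nu_{p,q}(e^*)^t]$. Since ${\sf c}$ is a local diffeomorphism near $0\in\fc$, this makes $(\fc,[e^*\nu_{p,q}(e^*)^t])$ locally conformally diffeomorphic to a $K$--homogeneous conformal geometry, and the conformal extension at $0$ associated with it reproduces $\alpha$ because both sides are built from the same algebraic data.

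For the construction of $[g]$, I would extend $\alpha_{-1}$ to a map $\fk\to\R^n$ via the projection $\fk\to\fk/\fh$ (equivalently, by composing $\alpha$ with the projection $\so\to\so/\fp\cong\R^n$; the two agree because $\alpha(\fh)\subset\fp$). Condition (1) of Proposition \ref{conf_ext}, read modulo $\fp$, says precisely that the $\frak{co}(p,q)$--component of $\alpha|_\fh$ acts on $\R^n$ by intertwining (via $\alpha_{-1}$) the adjoint action of $\fh$ on $\fk/\fh$. Hence $g_o := \alpha_{-1}^*\nu_{p,q}$ is $\operatorname{Ad}(H_0)$--invariant up to positive multiple, and $K$--translation yields a well-defined $K$--invariant conformal class $[g]$ on a neighborhood $U$ of $o$. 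At $X=0$, $\tilde{\sf c}(0)=e$ and $(\tilde{\sf c}^*\omega_K)_0$ is the inclusion $\fc\hookrightarrow\fk$, so $e^*_0 = \alpha_{-1}|_\fc$ is a linear isomorphism, and by continuity $e^*_X$ is an isomorphism on a neighborhood of $0$.

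The main computation is the pull-back identity at a general $X\in\fc$. By definition of the Maurer--Cartan form, $d\tilde{\sf c}|_X(Y) = dL_{\tilde{\sf c}(X)}|_e\bigl((\tilde{\sf c}^*\omega_K)_X(Y)\bigr)$, and combined with the $K$--equivariance $\pi\circ L_k = L_k\circ \pi$ of the projection $\pi:K\to M$, this gives $d{\sf c}|_X(Y) = dL_{\tilde{\sf c}(X)}|_o\bigl(p\circ\omega_K(d\tilde{\sf c}|_X Y)\bigr)$, where $p:\fk\to\fk/\fh = T_oM$ is the natural projection. Using $K$--invariance of $[g]$ up to conformal factor, $({\sf c}^*g)_X(Y_1,Y_2)$ is a positive multiple of $g_o\bigl(p\,\omega_K(d\tilde{\sf c}|_X Y_1),\,p\,\omega_K(d\tilde{\sf c}|_X Y_2)\bigr) = \nu_{p,q}(e^*_XY_1,e^*_XY_2)$, which proves ${\sf c}^*[g] = [e^*\nu_{p,q}(e^*)^t]$ on a neighborhood of $0$. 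The identification of the associated conformal extension at $0$ with $\alpha$ then follows because ${\sf c}$ is a local diffeomorphism and the extension is determined by the infinitesimal data at that point. The main technical point is tracking $d\tilde{\sf c}|_X$ at $X\neq 0$, where $\tilde{\sf c}(X)$ is a product of several exponentials coming from the decomposition $\fc = (\fc\cap\fl)\oplus(\fc\cap\fa)\oplus(\fc\cap\fn)$; the Maurer--Cartan identity above handles this uniformly and sidesteps any explicit computation of $\exp$--differentials.
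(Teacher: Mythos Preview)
Your proof is correct and follows essentially the same approach as the paper: both arguments identify $[e^*\nu_{p,q}(e^*)^t]$ with the pull-back via ${\sf c}$ of the $K$--invariant conformal class on $K/H$ determined by $g_o=\alpha_{-1}^*\nu_{p,q}$. The paper simply asserts this (``by construction \dots\ is the description of the invariant conformal class induced by $\alpha_{-1}^*\nu_{p,q}$ in these local coordinates''), whereas you explicitly carry out the pull-back computation using the Maurer--Cartan identity and $K$--equivariance of the projection $K\to K/H$; your extra detail is welcome but not a different route.
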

\begin{proof}
Under the assumptions of the proposition, we have the lift $\tilde{\sf c}: \fc\to K$ and thus we can pullback the Maurer--Cartan form $\omega_K: TK\to \fk$ on $K$ to $\tilde{\sf c}^*\omega_K: T\fc\to \fk$. So after composition with $\alpha$ we get a map  $T\fc\to \so$ and thus $\alpha_{-1}\circ \tilde{\sf c}^*\omega_K$ is a coframe $e^*:=(e^1, \dots, e^n): T\fc\to \R^n$. This defines the conformal class $[e^*\nu_{p,q}(e^*)^t]$ on $\fc$, which by construction is locally homogeneous with associated conformal extension $\alpha$. Indeed, if we assume that ${\sf c}: \fc\to K/H$ are exponential coordinates compatible with the decomposition $\fk= \fl\oplus  \fa \oplus  \fn$ for suitable closed subgroup $H$ of $K$ with Lie algebra $\fh$, then $[e^*\nu_{p,q}(e^*)^t]$ is the description of the invariant conformal class induced by the element $\alpha_{-1}^*\nu_{p,q}\in \bigodot^2 \fk/\fh^*$ in these local coordinates.
\end{proof}

It will be useful later to adopt the following definition.

\begin{def*}
We call the coframe $e^*$ from Proposition \ref{c-coframe} a \emph{$\fc$--coframe} and we say that its dual frame is a \emph{$\fc$--frame}.
\end{def*}

\subsection{Exponential coordinates for G\" odel metrics} \label{coord-godel}
Let us illustrate ideas of Section \ref{loc_coord} on the example from Section \ref{ext_godel}.

\begin{lem}\label{lem-coord-godel}
Let us consider the complement $\fc=\langle k_1,k_2,k_3,k_4\rangle$ for the conformal class of the G\"odel metrics from Section \ref{ext_godel}. Then there exists a decomposition $\fk=\fl\oplus \fa\oplus \fn$ such that
\begin{gather*}
e_1=\partial_{a_1}+\partial_{a_2},\  
e_2=\frac12(\partial_{a_3}+2a_4\partial_{a_4}) ,\ 
e_3=\sqrt{2}(-\partial_{a_2}+\partial_{a_4}),
e_4=\frac12(\partial_{a_1}-\partial_{a_2})\\
e^1=\frac12(da_1+da_2-2a_4da_3+da_4),\ e^2=2da_3,\ e^3= \frac1{\sqrt{2}}(-2a_4da_3+da_4),\\
 e^4=da_1-da_2+2a_4da_3-da_4
\end{gather*}
form the corresponding $\fc$--(co)frames in the exponential coordinates $$\fc=(a_1,a_2,a_3,a_4)\to M$$ compatible with the decomposition $\fl\oplus  \fa \oplus  \fn$. In particular,
\begin{gather*}
g=da_1^2-da_2^2+2a_4(da_2da_3+da_3da_2)-(da_2da_4+da_4da_2)
\\ +(4-2a_4^2)da_3^2
+a_4(da_3da_4+da_4da_3)-\frac12da_4^2.
%g&=da_1^2-da_2^2+2a_4da_2da_3+2a_4da_3da_2-da_2da_4-da_4da_2+(4-2a_4^2)da_3^2\\
%&+a_4da_3da_4+a_4da_4da_3-\frac12da_4^2.
\end{gather*}
\end{lem}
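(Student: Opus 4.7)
The plan is to exhibit a Levi--Iwasawa decomposition of $\fk$ explicitly, verify that $\fc = \langle k_1,k_2,k_3,k_4 \rangle$ is compatible with it, compute the pullback of the Maurer--Cartan form via the associated exponential coordinates, and then read off the frame, coframe, and metric as prescribed by Proposition~\ref{c-coframe}.

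First, I would analyse the structure of $\fk$. The brackets listed in Section~\ref{ext_godel} show that $k_1$ and $k_4$ are central, so $\mathfrak{z} := \langle k_1,k_4\rangle$ is a central abelian ideal. A direct Killing--form computation on the three--dimensional quotient $\fk/\mathfrak{z}$ yields a non--degenerate form of signature $(2,1)$, hence $\fk/\mathfrak{z}\cong \frak{sl}(2,\R)$. I would then lift this splitting to $\fk$: adjusting $k_3$ and $h_1$ by central terms, set
\[
k_3' := k_3+\tfrac{1}{\sqrt{2}}k_1-\sqrt{2}k_4,\qquad h_1' := h_1-k_1+2k_4,
\]
so that $\fs := \langle k_2,k_3',h_1'\rangle$ is closed under the bracket and isomorphic to $\frak{sl}(2,\R)$, giving a direct sum of Lie algebras $\fk = \mathfrak{z}\oplus \fs$.

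Next, I would Iwasawa--decompose $\fs$. The element $k_2$ acts $\ad$-semisimply on $\fs$ with eigenvalues $0,\pm 1$; the $+1$-eigenvector is $k_3'$ and a $-1$-eigenvector is $v_-:=h_1'+\tfrac{\sqrt{2}}{2}k_3'$. Taking $\fa_s := \langle k_2\rangle$, $\fn_s := \langle k_3'\rangle$, and $\fl_s := \langle k_3'+v_-\rangle$ (a non--zero scalar multiple of $k_1-2k_4-h_1$, which generates the compact $\frak{so}(2)$ inside $\fs$) yields $\fs = \fl_s\oplus \fa_s\oplus \fn_s$. Setting $\fl := \fl_s$, $\fa := \mathfrak{z}\oplus \fa_s = \langle k_1,k_2,k_4\rangle$, $\fn := \fn_s$ gives the sought decomposition $\fk = \fl\oplus \fa\oplus \fn$. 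Since $h_1\notin \fc$, we have $\fc\cap\fl = 0$; together with $\fc\cap \fa = \langle k_1,k_2,k_4\rangle$ and $\fc\cap \fn = \langle k_3'\rangle\subset\fc$, conditions C1 and C2 hold (C2 is vacuous and C1 has $\fc\cap\fl$ of minimal dimension, as suits the local case).

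I would then pick a basis $\{b_1,b_2,b_3\}\subset \fc\cap \fa$ together with $b_4\in \fc\cap \fn$ so that the $\alpha_{-1}$-images match the claimed coframe at the origin, parametrise $X = a_1 b_1+\cdots +a_4 b_4 \in \fc$, and form
\[
\tilde{\sf c}(a_1,a_2,a_3,a_4) := \exp(X_\fa)\exp(X_\fn),\quad X_\fa := a_1 b_1+a_2 b_2+a_3 b_3,\ X_\fn := a_4 b_4.
\]
Since $\fa$ and $\fn$ are both abelian, the product rule for the Maurer--Cartan form gives
\[
\tilde{\sf c}^{\,*}\omega_K = \Ad(\exp(-X_\fn))(dX_\fa)+dX_\fn,
\]
and the adjoint action truncates at the linear term because $\ad k_3'$ annihilates the central elements of $\fa$ while $\ad k_3'(k_2)=\mp k_3'$, so $(\ad X_\fn)^2\equiv 0$ on $\fa$. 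Composing with $\alpha_{-1}$ (which discards the $\fh$-component) produces the coframe $(e^1,\dots,e^4)$; inverting the resulting $4\times 4$ matrix yields the frame $(e_1,\dots,e_4)$; and finally $g = e^*\nu_{1,3}(e^*)^t = 2\,e^1 e^4+(e^2)^2+(e^3)^2$ expands to the displayed formula for $g$. The main obstacle is identifying the adapted basis $\{b_1,\dots,b_4\}$ so that the output simplifies to precisely the expressions in the statement; equivalently, it is the bookkeeping for how the convention ``$\fk$-bracket equals minus the vector-field bracket'' propagates through the adjoint action in the Maurer--Cartan pullback. Once this is pinned down, the remaining computations are purely mechanical.
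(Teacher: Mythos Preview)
Your plan is essentially the paper's own strategy: find a Levi--Iwasawa decomposition $\fk=\fl\oplus\fa\oplus\fn$, check that $\fc$ splits along it with $\fc\cap\fl=0$, pull back the Maurer--Cartan form through the product exponential, and read off the $\fc$--(co)frame and metric via Proposition~\ref{c-coframe}.  The substantive difference is in execution.  The paper realises $\fk$ concretely as the matrix Lie algebra $\R\oplus\frak{gl}(2,\R)$ via the parametrisation
\[
a_1(\tfrac12 k_1+k_4)+a_2(\tfrac12 k_1-k_4)+2a_3 k_2+a_4(\tfrac12 k_1+\tfrac{1}{\sqrt2}k_3-k_4)+a_5 h_1,
\]
so that $\fa$ is the diagonal, $\fn$ the strictly lower triangular part, and $\fl=\frak{so}(2)$; the Maurer--Cartan form is then a one--line matrix computation, and the transition matrix $T$ from $(a_1,\dots,a_4)$ to the $\alpha_{-1}$--basis fixes the $\fc$--coframe explicitly.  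This matrix realisation is exactly what resolves the ``main obstacle'' you flag---the choice of adapted basis $b_1,\dots,b_4$---which in your abstract treatment remains unspecified.  Your $\fa=\langle k_1,k_2,k_4\rangle$ and $\fn=\langle k_3'\rangle$ agree with the paper's, so once you adopt the same parametrisation the computations coincide.

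One slip: $k_3'+v_-$ is \emph{not} the compact direction in $\fs$ (check that $\ad(k_3'+v_-)$ has real eigenvalues; the $\frak{so}(2)$ generator is $k_3'-v_-$), and it is not a scalar multiple of $k_1-2k_4-h_1$.  This is harmless for the proof because $\fc\cap\fl=0$ regardless, so $\fl$ never enters the exponential coordinates---but it is worth correcting.
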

\begin{proof}
Let us start with the observation that the parametrization $$a_1(\frac12 k_1+k_4)+a_2(\frac12k_1-k_4)+2a_3k_2+a_4(\frac12k_1 +\frac{1}{\sqrt{2}}k_3-k_4)+a_5h_1$$ identifies $\fk$ with the following matrix Lie algebra
$$
\left[ \begin{smallmatrix}
 a_1 &0&0 \\ 0&a_2+a_3+2a_5&\frac12 a_5 \\
 0 & a_4-a_5&a_2-a_3+2a_5
\end{smallmatrix} \right]\cong \R\oplus \frak{gl}(2,\R),
$$
where $\fl=\frak{so}(2)$, $\fa$ is the diagonal part and $\fn$ is the strictly lower diagonal part of this matrix Lie algebra.
Thus we find a complement $\fc$ parametrized by $(a_1,a_2,a_3,a_4)$ satisfying the conditions C1, C2, because $\fc\cap \fl=0$, $\fc\cap \fa=\langle a_1,a_2,a_3\rangle$ and $\fc\cap \fn=\langle a_4\rangle$.  Then from the parametrization we obtain
\[
\alpha_{-1}:\fc\to \R^4, \ \  (a_1,a_2,a_3,a_4)\mapsto  T(a_1,a_2,a_3,a_4)^t,
\]
where
\[T=\left[ \begin{smallmatrix}
\frac12&\frac12&0& \frac12  \\ 
0&0&2& 0  \\ 
0&0&0& \frac1{\sqrt{2}} \\ 
1&-1&0& -1  \\ 
\end{smallmatrix} \right].\]
Thus 
\[{\sf c}(a_1,a_2,a_3,a_4)=\exp\left[ \begin{smallmatrix}
 a_1 &0&0 \\ 0&a_2+a_3&0 \\
 0 & 0&a_2-a_3
\end{smallmatrix} \right] \exp\left[ \begin{smallmatrix}
0 &0&0 \\ 0&0&0\\
 0 & a_4&0
\end{smallmatrix} \right] o
\] are the corresponding exponential coordinates compatible with the decomposition $\fk= \fl\oplus  \fa \oplus  \fn$. Consequently,
\[
\tilde{\sf c}^*\omega_K=\left[ \begin{smallmatrix}
da_1&0&0 \\ 
0&da_2+da_3&0 \\
 0 &-2a_4da_3+da_4&da_2-da_3
\end{smallmatrix} \right]
\]
can be restricted to the coframe $(da_1,da_2,da_3,-2a_4da_3+da_4)$ valued in $\fc$. Thus we obtain $\fc$--coframe $e^*=(da_1,da_2,da_3,-2a_4da_3+da_4)T^t$, i.e.,
$$g=(da_1,da_2,da_3,-2a_4da_3+da_4)T^t\nu_{1,3} T(da_1,da_2,da_3,-2a_4da_3+da_4)^t$$
 according to Proposition \ref{c-coframe}.
\end{proof}

\subsection{Homogeneous conformal Cartan geometries}\label{cartan}

The key objects for the tractorial approach to first order BGG operators are Cartan geometries of type $(G,P)$ for Lie groups $G$ with the Lie algebra $\so$ and their parabolic subgroups $P$ with the Lie algebra $\fp$. Let us recall that a Cartan geometry of type $(G,P)$ consists of a principal $P$--bundle $\ba$ over $M$ together with a Cartan connection $\omega: T\ba\to \fg$, i.e., a $P$--equivariant absolute parallelism $\omega$ that reproduces fundamental vector fields of the right $P$--action on $\ba$, \cite[Sections 1.5 and 1.6]{parabook}.

Let $PO(p+1,q+1)$ be the projectivization of $O(p+1,q+1)$, the orthogonal Lie group preserving $\bg$ on $\mathbb{T}$, and $P$ the stabilizer of a line generated by the first vector of standard basis of $\mathbb{T}$. There always is a Cartan geometry of type $(PO(p+1,q+1),P)$ that solves the equivalence problem of conformal geometries, \cite[Theorem 1.6.7]{parabook}. However, the representation $\rho$ of $\so$ does not have to integrate to a representation of $PO(p+1,q+1)$ and thus a global tractorial approach requires different choices of $G$, namely $O(p+1,q+1)$, $SO(p+1,q+1)$, $SO_o(p+1,q+1)$ or $Spin(p+1,q+1)$ to where $\rho$ integrates. However, the existence of Cartan geometries of type $(G,P)$ requires the conformal geometries to satisfy certain topological obstructions. For example, for $G=Spin(p+1,q+1)$ the conformal geometry has to be spin.

In the case of $K$--homogeneous conformal geometries on $M=K/H$, \cite[Theorem 1.5.15]{parabook} relates topological obstructions to the integrability of the restriction $d\iota$ of a conformal extension $\alpha: \fk\to \so$ to $\fh$ onto a Lie group homomorphism $\iota: H\to P$ such that $\Ad(\iota(h))\circ \alpha=\alpha\circ \Ad(h)$ for all $h\in H$. The Theorems \cite[Theorem 1.5.15 and 3.1.12]{parabook} then imply the following statement.

\begin{prop} \label{gen-ext}
The pair $(\alpha,\iota)$ provides the Cartan geometry $(\ba, \om)$ of type $(G,P)$ as follows

\begin{enumerate}
\item $\ba:=K\times_{\iota(H)}P,$ and
\item $\omega:=\omega_\alpha$, where $\omega_\alpha$ is the unique Cartan connection with the property $j^*\om_\alpha=\alpha\circ \om_K$ for the natural inclusion $j: K\to K\times_{\iota(H)}P$ and the Maurer--Cartan form $\omega_K$ on $K$.
\item The Cartan geometry is normal if and only if the conformal extension is normal. 
\end{enumerate}
\end{prop}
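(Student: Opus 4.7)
The plan is to apply [parabook, Theorems 1.5.15 and 3.1.12]: given the conformal extension $\alpha$ and its integration $\iota:H\to P$ satisfying $\Ad(\iota(h))\circ\alpha = \alpha\circ\Ad(h)$ (the latter obtained by exponentiating condition (1) of Proposition \ref{conf_ext}), those theorems produce a Cartan geometry of type $(G,P)$ via extension of structure group, and identify normality of the resulting Cartan connection with a normalization condition on its curvature. My job is to spell out the construction so that the hypotheses match, and then identify the two normality conditions directly.

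For (1) and (2), since $\iota(H)$ is closed in $P$ its diagonal action on $K\times P$ is free and proper, so $\ba := K\times_{\iota(H)} P$ is a principal $P$-bundle over $K/H=M$. I would construct $\omega_\alpha$ by descending the $\fg$-valued one-form
\[
\tilde\omega_{(k,p)}(\xi,\eta) = \Ad(p^{-1})\bigl(\alpha(\omega_K(\xi))\bigr) + \omega_P(\eta)
\]
on $K\times P$, where $\omega_P$ is the Maurer--Cartan form of $P$. The reproduction of fundamental vector fields and the right $P$-equivariance are immediate from the formula; invariance of $\tilde\omega$ under the $\iota(H)$-action, and hence well-definedness on $\ba$, rests on the identity $\Ad(\iota(h))\circ\alpha = \alpha\circ\Ad(h)$, which compensates exactly the $\Ad(h^{-1})$ twist picked up by $\omega_K$ under right translation by $h$. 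Condition (2) of Proposition \ref{conf_ext}---that $\alpha$ induces an isomorphism $\fk/\fh \to \fg/\fp$---then upgrades $\omega_\alpha$ to an absolute parallelism, i.e., to a genuine Cartan connection. The identity $j^*\omega_\alpha = \alpha\circ\omega_K$ holds by construction on $K\times\{e\}\subset K\times P$, and uniqueness is forced because $j(K)$ meets every $P$-orbit and $P$-equivariance propagates the value to the whole of $\ba$.

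For (3), I would evaluate the curvature $\Omega := d\omega_\alpha + \tfrac{1}{2}\{\omega_\alpha,\omega_\alpha\}$ at the base point $o := j(e)$. Using the Maurer--Cartan equation on $K$ together with the linearity of $\alpha$, one gets
\[
j^*\Omega(\xi,\eta) = \{\alpha(\omega_K(\xi)),\alpha(\omega_K(\eta))\} - \alpha([\omega_K(\xi),\omega_K(\eta)]),
\]
which under the identification from condition (2) of Proposition \ref{conf_ext} is exactly $\kappa$. Since $K$ acts transitively on the base and $P$-equivariance handles the fibers, $\Omega$ at every point of $\ba$ is $P$-conjugate to its value at $o$. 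The Kostant codifferential $\partial^*$ being $P$-equivariant then shows that $\partial^*\Omega \equiv 0$ on $\ba$ is equivalent to $\partial^*\Omega|_o = 0$, and the latter is precisely the normalization condition \eqref{norm}.

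The main technical point is the descent argument in (2): one has to balance the $\Ad(h^{-1})$ twist of $\omega_K$ under right translation on $K$ against the transformation of $\Ad(p^{-1})$ under left translation by $\iota(h)^{-1}$ on $P$, and it is exactly the equivariance built into the pair $(\alpha,\iota)$ that makes these terms cancel. Everything else, including the curvature identification in (3), is a direct unraveling of definitions once the construction of $\omega_\alpha$ is in place.
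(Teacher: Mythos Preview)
Your proposal is correct and follows the same route as the paper: the paper offers no proof beyond the sentence ``The Theorems \cite[Theorem 1.5.15 and 3.1.12]{parabook} then imply the following statement,'' so your explicit descent construction of $\omega_\alpha$ and the curvature computation simply unpack what those references contain. The details you give (the $H$-invariance of $\tilde\omega$ via $\Ad(\iota(h))\circ\alpha=\alpha\circ\Ad(h)$, the absolute-parallelism check from condition (2), and the identification $j^*\Omega=\kappa$ leading to the equivalence of normalities) are all correct.
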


Let us describe the normal Cartan geometry in local coordinates ${\sf c}: \fc\to M=K/H$. Let us emphasize that locally, this construction can be done under the assumptions of Proposition \ref{c-coframe} with the $\fc$--coframe $e^*=(e^1, \dots, e^n): T\fc\to \R^n$ as the key ingredient. 

Firstly, the Cartan bundle $\ba=K\times_{\iota(H)} P$ is locally trivialized to $\fc \times P$ via the identification $$(X,p)\mapsto \tilde{\sf c}(X)up$$ for $u$ in the fiber over $o=eH$ and we denote the corresponding natural section by $$\s^u: \fc \mapsto \fc \times P.$$ 
Then the pullback $(\s^u)^*\om$ of the Cartan connection $\om=\om_\alpha$ to $\fc$ is the matrix of one--forms on $\fc$ as follows
\begin{align} \label{Cartan_connection} \tag{Con}
\left[ \begin{smallmatrix}
 a_ke^k & b_ke^k & c_ke^k & d_ke^k & 0 \\
 e^i & A_{k}e^k & E_{k}e^k & F_{k}e^k & * \\
 e^{j+p} & C_{k}e^k & B_{k}e^k & * & * \\
 e^{i+q} & D_{k}e^k & * & * & * \\
 0 & * & * & * & *
\end{smallmatrix} \right],
\end{align}
where $i=1, \dots, p$ and $j=1, \dots, q-p$ and $k=1,\dots,p+q$. Here blocks are one--forms valued in parts of \eqref{blockso}.
\begin{itemize}
\item  Components $(a_k,A_k,B_k,C_k,D_k,E_k,F_k)e^k$ of \eqref{Cartan_connection} are one--forms valued in $\frak{co}(p,q)$ and provide a connection form of a Weyl connection on $\fc$. Thus they are determined up to a choice of the Weyl connection by vanishing of the torsion. 
If $a_k=0$, then it is a Levi-Civita connection for some metric in the conformal class.
 
\item Components $(b_k,c_k,d_k)e^k$ of \eqref{Cartan_connection} are one--forms valued in $\R^{n*}$ and provide the Rho--tensor $\Rho$ of the corresponding Weyl connection on $\fc$. In particular, they are uniquely determined by the one--forms from the above point.
 \end{itemize}
 
Finally, the pullback $(\s^u)^*\kappa$ of the curvature of the Cartan geometry to $\fc$ is a matrix of $2$--forms on $\fc$ as follows
\begin{align} \label{Cartan_curvature} \tag{Cur}
\left[ \begin{smallmatrix}
 0 & Y_{kl}e^k  \wedge e^l & 0 \\
 0 & W_{kl}e^k \wedge e^l & * \\
 0 &  0 & 0
\end{smallmatrix} \right],
\end{align}
where $l=1,\dots p+q$.
The component $W$  taking values in $\wedge^2 \R^{n*} \otimes \frak{so}(p,q)$ corresponds to the Weyl curvature of the Weyl connections. The component $Y$ taking values in $\wedge^2 \R^{n*} \otimes \R^{n*}$ corresponds to the Cotton--York tensor $Y$ of the Weyl connection from above.

Let us relate the components of \eqref{Cartan_connection} with the map $\alpha$.

\begin{prop}\label{localext}
Let $(e^1, \dots, e^n)$ be a $\fc$--coframe and $u=(0,\id)\in \fc\times P$. 
There exists normal conformal extension $\alpha: \fk \to \so$ of $(\fk,\fh)$ such that the pullback $(\s^u)^*\om$ to $T\fc$ of the Cartan connection \eqref{Cartan_connection} satisfies \[(\s^u)^*\om=\alpha \circ \big((\alpha_{-1})^{-1}\circ (e^1, \dots, e^n)+\sum_i H_ie^i\big)\] 
for certain functions $H_1,\dots, H_n : \R^n \to \fh$ such that $$\omega_K|_{T\fc}=(\alpha_{-1})^{-1}\circ (e^1, \dots, e^n)+\sum_i H_ie^i\in \fc\oplus \fh.$$
\end{prop}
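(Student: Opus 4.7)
The plan is to produce the required normal conformal extension $\alpha$ via Proposition~\ref{conf_ext}, pass to the associated Cartan geometry via Proposition~\ref{gen-ext}, and then compute $(\s^u)^*\om$ by factoring the section $\s^u$ through the Maurer--Cartan form on $K$.

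First I would observe that by Proposition~\ref{c-coframe} the given $\fc$--coframe $(e^1,\dots,e^n)$ is of the form $\alpha_{-1}\circ\tilde{\sf c}^*\om_K$ for a uniquely determined linear isomorphism $\alpha_{-1}:\fc\to\R^n$ (the one sending the $\fc$--component of $\tilde{\sf c}^*\om_K$ to $(e^1,\dots,e^n)$). Applying Proposition~\ref{conf_ext} I extend this $\alpha_{-1}$ to a normal conformal extension $\alpha=\alpha_{-1}+\alpha_0+\alpha_1+d\iota:\fk\to\so$ of $(\fk,\fh)$, and feed the pair $(\alpha,\iota)$ into Proposition~\ref{gen-ext} to obtain the normal Cartan geometry $(\ba=K\times_{\iota(H)}P,\om_\alpha)$ satisfying $j^*\om_\alpha=\alpha\circ\om_K$.

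The next step is to identify $\s^u$ with $j\circ\tilde{\sf c}$. In the local trivialization $\fc\times P\cong\ba$ given by $(X,p)\mapsto\tilde{\sf c}(X)\cdot u\cdot p$, the distinguished point $u=(0,\id)$ corresponds to $j(e_K)\in\ba$, so $\s^u(X)=(X,\id)$ is precisely $j(\tilde{\sf c}(X))$. Naturality of pullback then yields
\[
(\s^u)^*\om_\alpha=\tilde{\sf c}^*(j^*\om_\alpha)=\tilde{\sf c}^*(\alpha\circ\om_K)=\alpha\circ\tilde{\sf c}^*\om_K.
\]

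To finish, I would decompose $\tilde{\sf c}^*\om_K:T\fc\to\fk$ along $\fk=\fc\oplus\fh$. By the defining property of the $\fc$--coframe, the $\fc$--component equals $(\alpha_{-1})^{-1}\circ(e^1,\dots,e^n)$; the $\fh$--component, being an $\fh$--valued one--form on the parallelized manifold $\fc$, expands uniquely as $\sum_i H_ie^i$ for functions $H_i:\fc\to\fh$ (viewed as maps $\R^n\to\fh$ via $\alpha_{-1}$) because $(e^1,\dots,e^n)$ is a coframe. Substituting and using linearity of $\alpha$ gives the stated formula. The only step that requires some care is the identification $\s^u=j\circ\tilde{\sf c}$ in the local trivialization; everything else is linearity of $\alpha$ and functoriality of pullback, so I do not anticipate a substantive obstacle.
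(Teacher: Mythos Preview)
Your argument is correct and follows essentially the same route as the paper's own (very compressed) proof: both obtain $\alpha$ by extending the $\alpha_{-1}$ underlying the $\fc$--coframe via Proposition~\ref{conf_ext}, invoke the defining property $j^*\om_\alpha=\alpha\circ\om_K$ from Proposition~\ref{gen-ext}, and then decompose $\tilde{\sf c}^*\om_K$ along $\fk=\fc\oplus\fh$. Your identification $\s^u=j\circ\tilde{\sf c}$ is exactly the step the paper leaves implicit when it appeals to left--invariance of $\om$ and $\om_K$; spelling it out as you do is the right way to make the pullback computation transparent.
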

\begin{proof}
The claim on the existence of $\alpha$ and the pullback is clear from the formula \eqref{Cartan_connection}, because both the Cartan connection $\omega$ and $\omega_K$ are left--invariant. In particular, this recovers the construction of $\alpha$ from $\alpha_{-1}$ from the proof of Proposition \ref{c-coframe}. So it remains to compare $(e^1, \dots, e^n)$ with $\omega_K$, which provides the claimed functions.
\end{proof}

\begin{cor} \label{localext-alg}
If $\fc$ is a complementary Lie subalgebra, then functions $H_1,\dots, H_n$ vanish and the parts of \eqref{Cartan_connection} coincide with the restriction of $\alpha$ to $\fc$ in the $\fc$--coframe.
\end{cor}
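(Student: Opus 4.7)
The plan is to trace through the formula from Proposition \ref{localext} and observe that the functions $H_i$ precisely measure the $\fh$--component of $\omega_K|_{T\fc}$ in the direct sum decomposition $\fk = \fc \oplus \fh$. So everything will reduce to showing that, under the hypothesis that $\fc$ is a Lie subalgebra, this $\fh$--component vanishes identically.

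First I would observe that when $\fc$ is a subalgebra, it integrates to a connected (immersed) subgroup $C \subset K$ with Lie algebra $\fc$. Since $\fc$ is closed under the bracket, each of the intersections $\fc \cap \fl$, $\fc \cap \fa$, $\fc \cap \fn$ is a subalgebra of $\fc$, and hence the individual factors $\exp(L_1), \dots, \exp(L_j), \exp(X_\fa), \exp(X_\fn)$ that appear in the definition of $\tilde{\sf c}(X)$ all lie in $C$. Consequently $\tilde{\sf c}: \fc \to K$ actually maps into the subgroup $C$.

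Next, for any smooth curve $\gamma(t)$ contained in $C$, the Maurer--Cartan form satisfies $\omega_K(\dot\gamma(t)) = (L_{\gamma(t)^{-1}})_*\dot\gamma(t) \in T_eC = \fc$. Applying this to curves $t \mapsto \tilde{\sf c}(X + tY)$ with $X,Y \in \fc$, I would conclude that $\tilde{\sf c}^*\omega_K$ takes values entirely in $\fc$. Comparing with the decomposition supplied by Proposition \ref{localext},
\[
\omega_K|_{T\fc} = (\alpha_{-1})^{-1} \circ (e^1, \dots, e^n) + \sum_i H_i e^i \in \fc \oplus \fh,
\]
uniqueness of the splitting then forces the second summand to vanish, so $H_i = 0$ for every $i$.

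Substituting $H_i = 0$ into the formula from Proposition \ref{localext} yields $(\s^u)^*\om = \alpha \circ (\alpha_{-1})^{-1}\circ (e^1, \dots, e^n)$, and reading off the block entries in the $\fc$--coframe recovers the components of \eqref{Cartan_connection} as the corresponding block entries of $\alpha|_\fc$. I do not anticipate a real obstacle here: the corollary is essentially bookkeeping once one identifies the $H_i$ as the reductivity defect of the pair $(\fc,\fh)$. The only minor point of care is to ensure the ordered product defining $\tilde{\sf c}$ stays in $C$ even when several of the summand subspaces of $\fc$ are simultaneously nontrivial, but this follows immediately from the fact that each factor individually lies in the subalgebra $\fc$.
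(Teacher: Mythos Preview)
Your argument is correct and is exactly the reasoning the paper leaves implicit: the corollary is stated without proof immediately after Proposition \ref{localext}, and your proposal supplies precisely the missing details, namely that when $\fc$ is a subalgebra the image of $\tilde{\sf c}$ lies in the integral subgroup $C$, so $\tilde{\sf c}^*\omega_K$ is $\fc$--valued and the $\fh$--components $H_i$ vanish. One small remark: you do not actually need that $\fc\cap\fl$, $\fc\cap\fa$, $\fc\cap\fn$ are subalgebras---it suffices that each element being exponentiated lies in $\fc$ (which is guaranteed by condition C1), so each factor of $\tilde{\sf c}(X)$ lies in $C$ and hence so does the product.
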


Thus if $\fc$ is a subalgebra of $\fk$, then $\alpha|_{\fc}$ can be directly observed from $(\s^u)^*\om$ and $\alpha|_{\fh}$ is uniquely determined by the conditions $\alpha\circ \Ad(h)=\Ad(\iota(h)) \circ \alpha$ and  $\alpha|_\fh=d\iota$. In general, one needs to compute the maps $H_i:\fc \to \fh$ to observe $\alpha$ from $(\s^u)^*\om$. 

\subsection{Cartan connection for G\" odel metrics.} \label{cc-godel}
In the case $M=\R^4$, we have a solvable Lie group $C$ with the Lie algebra $\fc$ acting simply transitively on $M$. In the case $M=S^1\times \R^3$, we have a Lie group $K=\R\times SO(2)\times Sl(2,\R)$ acting transitively on $M$ with stabilizer $H=\Delta(SO(2))$, that is the diagonal in product of $SO(2)=\R/4\sqrt{2}\pi\mathbb{Z}$ and the maximal compact subgroup of $Sl(2,\R)$. In both cases, we can use the conformal extension $\alpha$ from Lemma \ref{ext-godel} to describe the Cartan geometry using Proposition \ref{localext} and Corollary \ref{localext-alg}.

\begin{lem} \label{lem-cartan-godel}
On both $M=\R^4$ and $M=S^1\times \R^3$ with the conformal class of the G\"odel metrics, we have $\fc$--frame and $\fc$--coframe as follows
\begin{gather*}
e_1:=\partial_t+\partial_z,\ \ e_2:=\partial_x, \ \ e_3:=\sqrt{2}(e^{-x}\partial_y-\partial_t), \ \ e_4:=-\frac12\partial_t+\frac12\partial_z, \\
e^1:=\frac12 (dt +e^x dy+dz), \ \ e^2:=dx, \ \ e^3:=\frac{e^x}{\sqrt{2}} dy, \ \  e^4:=-dt-e^xdy+dz,
\end{gather*}
and the pullback $(\s^u)^*\om$ of the corresponding Cartan connection $\om$ from Proposition \ref{localext} takes form
$$
\left[ \begin {smallmatrix} 
0&-{\frac{1}{2}}{ e^1}+{\frac{1}{6}}{ e^4}&-{\frac{1}{12}}{ e^2}&-{\frac{1}{12}}{ e^3}&{\frac{1}{6}}{ e^1}-{\frac{1}{8}}{ e^4}&0\\ 
e^1&0&{\frac{\sqrt {2}}{4}}{ e^3}&-{\frac{\sqrt {2}}{4}}{ e^2}&0&-{\frac{1}{6}}{ e^1}+{\frac{1}{8}}{ e^4}\\
e^2&{\frac{\sqrt {2}}{2}}{ e^3}&0&{\frac{\sqrt {2}}{2}}{ e^1}-{ e^3}-{\frac{\sqrt {2}}{4}}{ e^4}&-{\frac{\sqrt {2}}{4}}{ e^3}&{\frac{1}{12}}{ e^2}\\
e^3&-{\frac{\sqrt {2}}{2}}{ e^2}&-{\frac{\sqrt {2}}{2}}{ e^1}+{ e^3}+{\frac{\sqrt {2}}{4}}{ e^4}&0&{\frac{\sqrt {2}}{4}}{ e^2}&{\frac{1}{12}}{ e^3}\\
e^4&0&-{\frac{\sqrt {2}}{2}}{ e^3}&{\frac{\sqrt {2}}{2}}{ e^2}&0&{\frac{1}{2}}{ e^1}-{\frac{1}{6}}{ e^4}\\
0&-e^4&-e^2&-e^3&-e^1&0\end {smallmatrix} \right] 
.
$$
In particular, the component of $(\s^u)^*\om$ in $\frak{co}(p,q)$ corresponds to the Levi--Civita connection of the G\"odel metric $g$ with Christoffel symbols
\[
\Gamma_{tx}^t= \Gamma_{xt}^t=1,\ \Gamma_{xy}^t=\Gamma_{yx}^t=\Gamma_{ty}^x=\Gamma_{yt}^x=\frac{e^x}2,\ \Gamma_{yy}^x=\frac{e^{2x}}2,
 \Gamma_{tx}^y=\Gamma_{xt}^y=\frac{-1}{e^x}
\]
and the component of $(\s^u)^*\om$ in $\R^{n*}$ corresponds to the $\Rho$--tensor
\[
-\frac{1}{24}(10dt^2 +10e^x(dtdy+dydt) +2dx^2+11e^{2x}dy^2+2dz^2).
\]
Moreover, the curvature of the Cartan connection $\omega$ is given by $\kappa$ from Lemma \ref{ext-godel} viewed as a constant function $M\to \wedge^2 \R^{n*}\otimes \sodc.$
\end{lem}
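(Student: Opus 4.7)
The strategy is to apply Corollary \ref{localext-alg} to the conformal extension $\alpha$ of Lemma \ref{ext-godel}; the entire matrix $(\s^u)^*\om$ then arises by substituting $x_i \mapsto e^i$ and $x_5 = 0$ in the matrix of $\alpha$, while the Cartan curvature is $\kappa$ from that lemma, viewed as a constant function $M \to \wedge^2 \R^{n*} \otimes \sodc$ via the $\fc$--coframe. The hypothesis of the corollary is that $\fc = \langle k_1, k_2, k_3, k_4 \rangle$ is a complementary Lie subalgebra of $\fk$. This I would verify directly: since $k_1$ and $k_4$ are linear combinations of the commuting coordinate fields $\partial_t, \partial_z$, all brackets involving them vanish, and the single remaining bracket $[k_2, k_3]$ was already computed in Section \ref{ext_godel} to equal $\frac{1}{\sqrt{2}} k_1 + k_3 - \sqrt{2} k_4$, which lies in $\fc$.

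Next I would check that the proposed $(e_i, e^i)$ is indeed a $\fc$--(co)frame on $M$. The cleanest route is to exhibit the local diffeomorphism $t = a_2$, $x = 2 a_3$, $y = a_4\, e^{-2 a_3}$, $z = a_1$ relating the exponential coordinates of Lemma \ref{lem-coord-godel} to the G\"odel coordinates; pulling the coframe of that lemma back through this diffeomorphism gives precisely the proposed $e^i$, which then determines $e_i$ by duality. As a sanity check one verifies mutual duality $e^i(e_j) = \delta^i_j$ and $g = e^* \nu_{1,3}\, (e^*)^t$ by direct computation, recovering the G\"odel metric $-(dt + e^x dy)^2 + dx^2 + \frac{1}{2} e^{2x} dy^2 + dz^2$.

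With the hypotheses verified, Corollary \ref{localext-alg} produces the claimed matrix for $(\s^u)^*\om$ by direct substitution. It remains to interpret its components. The $(1,1)$--entry $a = 0$ signals that the induced Weyl connection is the Levi--Civita connection of a specific representative in $[g]$, and converting the $(A, B, C, E)$--entries from the $e^i$--coframe back to $(t, x, y, z)$ yields the stated Christoffel symbols; equivalently, one checks directly that the resulting connection is torsion-free and $g$--metric. The $\R^{n*}$--row $(b_k, c_k, d_k)\, e^k$ reconstructs the Rho tensor, and substituting the $e^i$ in $(t, x, y, z)$ and symmetrizing produces the claimed expression $-\frac{1}{24}(10\, dt^2 + 10 e^x (dt\, dy + dy\, dt) + 2\, dx^2 + 11 e^{2x}\, dy^2 + 2\, dz^2)$. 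The main obstacle is this last bookkeeping step: translating between the frame-indexed data of the Cartan connection, where each block carries its specific $\frak{so}(p,q)$--index convention from \eqref{blockso}, and the coordinate-indexed Christoffel symbols and Rho tensor.
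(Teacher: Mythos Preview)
Your proposal is correct and follows essentially the same route as the paper: both arguments verify that $\fc$ is a subalgebra, identify the coordinate transition $(a_1,a_2,a_3,a_4)\mapsto(a_2,2a_3,a_4e^{-2a_3},a_1)$ from Lemma~\ref{lem-coord-godel} to the G\"odel coordinates via composition of flows, transport the $\fc$--(co)frame through it, and then invoke Corollary~\ref{localext-alg} to read off $(\s^u)^*\om$ directly from $\alpha$. The paper additionally flags that the $\fc$--frame is non--holonomic, so the Christoffel symbols in $(t,x,y,z)$ receive contributions from the frame's own structure functions; your closing remark about the frame-to-coordinate bookkeeping covers the same point.
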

\begin{proof}
The conformal Killing fields corresponding to the complement $\fc$ are para\-met\-rized by $(a_1,a_2,a_3,a_4)$ in the following way
\[
(\fc\cap \fa)\oplus (\fc\cap \fn)=\{a_1\partial_z +a_2 \partial_t+a_3 (2\partial_x-2y\partial_y)\}\oplus \{a_4\partial_y\}.
\]
Since the product of exponential maps corresponds to the composition of flows of the conformal Killing fields, we can compute that the transition from $(a_1,a_2,a_3,a_4)$--coordinates to $(t,x,y,z)$--coordinates on $M$ takes the form
\[
(a_1,a_2,a_3,a_4)\mapsto (a_2,  2a_3, a_4e^{-2a_3}, a_1).
\]

Thus the $\fc$--(co)frame is obtained from Lemma \ref{ext-godel} using the transition. Since $\fc$ is a subalgebra of $\fk$, the rest is then obtained according to Proposition \ref{localext} and Corollary \ref{localext-alg} using $\alpha$ and the $\fc$--(co)frame. Let us note that these are non--holonomic frames and thus there is a contribution of derivatives of vectors in the $\fc$--frame to the Christoffel symbols of the corresponding Weyl connection, that is the Levi--Civita for $g$ in this case. 
\end{proof}

\section{First BGG operators on homogeneous conformal geometries} \label{sec3}

\subsection{On tractorial approach to first BGG operators}

Let us summarize some details from \cite{H-srni} that we will need to prove the Theorem \ref{thm-intro}. The basic idea is to prolong the first BGG operator from a bundle $\mathcal{X}\to M$ with the standard fiber $\mathbb{X}$ to a linear (prolongation) connection on the tractor bundle $\mathcal{V}\to M$ with standard fiber $\mathbb{V}$, where  $\mathbb{X}$ can be identified with the $\frak{co}(p,q)$--module of the lowest weight in the representation $\rho: \so\to \frak{gl}(\mathbb{V}).$

The result of \cite{new-norm} states that the prolongation connection can be constructed from the representation $\rho$ using the conformal Cartan connection and that solutions of the first BGG operator are in bijective correspondence with parallel sections of the prolongation connection. Since on homogeneous conformal geometries, the prolongation connection is an invariant connection and the conformal Cartan connection is completely described as in Proposition \ref{gen-ext} by the conformal extension $\alpha$, the tractorial approach provides the correct setting for the computation of solutions of the first BGG operators. Before we start proving the Theorem \ref{thm-intro}, let us recall the representations $\rho$ corresponding to the most studied first BGG operators.
\noindent \\[2mm] {\bf (1)}
 The conformal class $[g]$ can be viewed as a section of a trivial line subbundle of $\bigodot^2T^*M[2]$ representing the inclusion $\mathcal{E}[-2] \hookrightarrow \bigodot^2T^*M$ provided by the conformal class.
The standard fiber of this line bundle is $\mathbb{X}=\R$. 
The corresponding tractor bundle has the fiber $\mathbb{V}=\R$ for trivial representation of $\rho$ and the section $[g]$ defines a conformal metric $\bg$ on the standard tractor bundle with the standard fiber $\mathbb{T}$. These allow to rise and lower indices at the price of adding the conformal density. 
\noindent \\[2mm]  {\bf (2)}
(Almost) Einstein scales are sections $\sigma$ of bundle $\mathcal{E}[1]$ such that $\sigma^{-2}\bg$ are Einstein metrics in $[g]$ (on open dense subsets on $M$ where sections $\sigma$ are non--vanishing), \cite{brinkman,gov,G-einstein}. The standard fiber is $\mathbb{X}=\R[1]$ in this case and the corresponding tractor bundle has fiber $\mathbb{V}=\mathbb{T}$. We show later on examples that the zero locus of $\sigma$ inherits a distinguished geometric structure.
\noindent \\[2mm]  {\bf (3)}
 Twistor spinors are sections of bundles where standard fibers $\mathbb{X}$ are tensor products of the spinor representations of $\frak{so}(p,q)$ with $\mathcal{E}[\frac12]$ satisfying the twistor equation, \cite{BTGK,bar,PR}. The standard fibers $\mathbb{V}$ of the corresponding tractor bundles are the spinor representations $\mathbb{D},\mathbb{D}^\pm$.
\noindent \\[2mm]  {\bf (4)}
 Conformal Killing vectors are sections of $TM$ whose Lie derivative preserves the conformal class and thus $\mathbb{X}=\R^n$. 
The corresponding tractor bundle has the standard fiber $\mathbb{V}=\so\cong \wedge^2 \mathbb{T}$.
\noindent \\[2mm]  {\bf (5)}
 Conformal Killing $k$--tensors are solutions of first BGG operators on $\bigodot^k_0 TM\cong \bigodot^k_0 T^*M[2k]$, \cite{black-holes} and references therein, and $\mathbb{X}=\bigodot^k_0 \R^n$. The corresponding tractor bundle $\mathbb{V}$ has the standard fiber $\boxtimes(\bigodot^k\so)$.
\noindent \\[2mm]  {\bf (6)}
 Conformal Killing--Yano $(k-1)$--forms are solutions of first BGG operators on $\wedge^{k-1} T^*M[k]$, \cite{black-holes,Kress,Y}, and $\mathbb{X}=\wedge^{k-1}\R^n[2-k]$. The corresponding tractor bundle $\mathbb{V}$ has the standard fiber $\wedge^{k}\mathbb{T}$.

\subsection{Local solutions of first BGG operators on homogeneous conformal geometries}

In this section, we work with the Cartan connections of type $(G,P)$ for a Lie group $G$ with Lie algebra $\so$ such that $\rho$ integrates to a representation  $\lambda$ of $G$. We assume $K,H$ and $\iota$ are such that we can use the description of the Cartan connections of type $(G,P)$ associated with the conformal extension $\alpha: \fk\to \so$ of $(\fk,\fh)$ from Section \ref{cartan}. In general, this cannot be done globally. Nevertheless, the local description of this Cartan geometry is always available using the $\fc$--(co)frame. Thus for the claim of the following local version of Theorem \ref{thm-intro}, we can consider this assumption without loss of generality. The Theorem \ref{thm-intro} then directly follows from this theorem.

\begin{thm} \label{thm-loc}
Let $(M,[g])$ be a homogeneous conformal geometry with associated conformal extension $\alpha: \fk\to \so$ of $(\fk,\fh)$. Let $\rho: \fg\to \frak{gl}(\mathbb{V})$ be  representation encoding first BGG operator on bundle $\mathcal{X}\to M$. Then there are representations $\Phi: \fk\to \frak{gl}(\mathbb{S})$ and $d\mu: \fh\to \frak{gl}(\mathbb{X})$ and an $\fh$--equivariant projection $\pi: \mathbb{S}\to \mathbb{X}$ describing the (local) solutions of the first BGG operator.

In particular, there is an inclusion $\mathbb{S}\subset \mathbb{V}$ such that the function $$\exp(X)\mapsto \exp(-\Phi(X))(s)\in \mathbb{V}$$ for $s\in \mathbb{S}$ and $X$ in some neighborhood of $0$ in $\fk$ induces a natural prolongation of the (local) solution of the first BGG operator to a section of a tractor bundle $\mathcal{V}$ over $M$ with standard fiber $\mathbb{V}$ that is parallel for prolongation connection on $\mathcal{V}$.
\end{thm}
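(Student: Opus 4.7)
The plan is to recognize the prolongation connection on the tractor bundle $\mathcal{V}$ as a $K$-invariant linear connection and to translate parallelism into an algebraic system whose solution set is $\mathbb{S}$. I would first recall from \cite{new-norm} that solutions of the first BGG operator on $\mathcal{X}$ correspond bijectively to sections of $\mathcal{V}=\ba\times_P\mathbb{V}$ parallel for a prolongation connection $\nabla^{\mathcal{V}}$ built from the normal tractor connection (encoded by $\om=\om_\al$) plus a curvature correction polynomial in $\rho$ and $\ka$. By Proposition \ref{gen-ext}, both $\om_\al$ and $\ka$ are left-invariant on $\ba=K\times_{\iota(H)}P$, so $\nabla^{\mathcal{V}}$ is $K$-invariant. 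Trivializing $\mathcal{V}$ along the section $\s^u$ and using $j^*\om_\al=\al\circ\om_K$, the connection form of $\nabla^{\mathcal{V}}$ pulled back to $K$ is $\Phi\circ\om_K$ for a linear map
\[
\Phi:\fk\to\frak{gl}(\mathbb{V}),\qquad \Phi(X)=\rho(\al(X))+\Psi_{\ka}(X),
\]
where $\Psi_\ka$ encodes the curvature correction from \cite{new-norm}.

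A section $s$ of $\mathcal{V}$ corresponds to an $H$-equivariant function $f:K\to\mathbb{V}$, and $s$ is parallel for $\nabla^{\mathcal{V}}$ if and only if $\tilde{X}(f)+\Phi(X)(f)=0$ for every left-invariant vector field $\tilde{X}$ generated by $X\in\fk$. Applying $\widetilde{[X,Y]}=[\tilde{X},\tilde{Y}]$ to this first-order system yields the pointwise integrability conditions
\[
\bigl([\Phi(X),\Phi(Y)]-\Phi([X,Y])\bigr)(f(k))=0 \quad \text{for all } X,Y\in\fk.
\]
I would then define $\mathbb{S}\subseteq\mathbb{V}$ as the largest subspace which is $\Phi(\fk)$-invariant and on which every bracket defect of $\Phi$ vanishes; it exists by iterated intersection. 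By construction $\Phi|_\mathbb{S}:\fk\to\frak{gl}(\mathbb{S})$ is a genuine Lie algebra representation, so for every $s\in\mathbb{S}$ standard ODE theory produces the unique local solution $f_s(\exp(X))=\exp(-\Phi(X))(s)$. The required $H$-equivariance of $f_s$ amounts to $\Phi|_\fh=\rho\circ d\iota$ on $\mathbb{S}$, which holds because $\al|_\fh=d\iota$ by Proposition \ref{conf_ext}(1) and because $\ka$ has both arguments in $\so/\fp$, forcing $\Psi_\ka|_\fh=0$.

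Finally, setting $d\mu:=\rho|_\fp\circ d\iota$ and letting $\pi:\mathbb{V}\to\mathbb{X}$ be the natural projection onto the projective slot (the lowest weight $\frak{co}(p,q)$-component in the filtration defined by $E$), the restriction $\pi:\mathbb{S}\to\mathbb{X}$ is automatically $\fh$-equivariant since $d\iota(\fh)\subset\fp$ preserves the filtration and descends to $d\mu$ on the bottom slot. Composing $f_s$ with $\pi$ recovers the local BGG solution via the bundle isomorphism $\mathcal{X}\cong K\times_{\mu(H)}\mathbb{X}$. I expect the main technical obstacle to be writing $\Psi_\ka$ explicitly from the splitting operator and the Kostant codifferential of the BGG machinery in \cite{new-norm}, so that $\Phi$ really coincides with the connection form of $\nabla^{\mathcal{V}}$; once that formula is in place, the algebraic characterization of $\mathbb{S}$ and the exponential expression for the parallel section follow from the invariant linear-connection calculus on $K\times_{\iota(H)}\mathbb{V}$.
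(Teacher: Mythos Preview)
Your proposal is correct and follows essentially the same approach as the paper: recognize the prolongation connection as a $K$--invariant connection $\nabla^\Phi=D^\fk+\Phi$ with $\Phi=\rho\circ\alpha+\Psi$, identify parallel sections with $H$--equivariant functions annihilated by the curvature $R^\Phi(X,Y)=[\Phi(X),\Phi(Y)]-\Phi([X,Y])$, and obtain $\mathbb{S}$ by iterated intersection (your ``largest $\Phi(\fk)$--invariant subspace on which the bracket defect vanishes'' is exactly the paper's descending chain $\mathbb{S}^0\supset\mathbb{S}^1\supset\cdots$, i.e., the annihilator of the infinitesimal holonomy as in \cite{KoNo}). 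The only presentational difference is that the paper unpacks the BGG machinery (Kostant codifferential, splitting operator, the characterization of $\Psi$ via $\partial^*R^{\rho\circ\alpha+\Psi}=0$) rather than deferring it to \cite{new-norm}, and writes the iterative formulas \eqref{holannih1}, \eqref{holannih2} explicitly.
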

\begin{proof}
Firstly, let us consider the tractor bundle
$$\mathcal{V}:=K\times_{\lambda\circ \iota(H)}\mathbb{V} \to K/H$$
and interpret its sections as  $H$--equivariant functions $s: K\to \mathbb{V}$ for the right multiplication on $K$ and the action $\lambda\circ \iota$ on $\mathbb{V}$. Let us use the notation $s\in \Gamma(\mathcal{V})^\ell$ for $H$--equivariant function $s: K\to \mathbb{V}^\ell$. This allows us to define fundamental derivative $D^\fk_t s:=\om_K^{-1}(t).s$
 for $H$--equivariant function $t: K\to \fk$ and $s\in  \Gamma(\mathcal{V})$, where $.$ is the directional derivative in the direction of  the vector field $\om_K^{-1}(t)$ on $K$.  Fundamental derivative $D^\fk$ provides a uniform description of $K$--invariant linear connections $ \Omega^0(\mathbb{V})\to \Omega^1(\mathbb{V})$, \cite{parabook}, where we use notation
\begin{align*}
\Omega^k(\mathbb{V}):=\Gamma(K\times_{(\Ad^k\otimes \lambda)\circ \iota(H)}\wedge^k\R^{n*}\otimes \mathbb{V})
\end{align*}
for the spaces of $\mathbb{V}$--valued k--forms on $K/H$. Precisely, each $K$--invariant linear connection is given as 
$$\nabla^\Phi:=D^\fk+\Phi,$$
where $\Phi: \fk\to \frak{gl}(\mathbb{V})$ is an $H$--equivariant map satisfying $\Phi(Y)=d\lambda\circ \alpha(Y)$ for all $Y\in \fh$. Its curvature $R^{\Phi}: K\to  \wedge^2\R^{n*}\otimes \frak{gl}(\mathbb{V})$ is given for $X_0,X_1\in \fk$ as
$$R^{\Phi}(\alpha(X_0)+\fp,\alpha(X_1)+\fp)=[\Phi(X_0),\Phi(X_1)]-\Phi([X_0,X_1]).$$
In particular, since the representation $\rho$ satisfies $\rho\circ \alpha(Y)=d\lambda\circ \alpha(Y)$ for all $Y\in \fh$, there is a $K$--invariant linear connection
\begin{align*}
\nabla^{\rho\circ \alpha}=D^\fk+\rho\circ \alpha
\end{align*}
on $\mathcal{V}$ that is the usual tractor connection with curvature $R^{\rho\circ \alpha}=\rho\circ \kappa.$

Moreover, there is the Kostant's codifferential
$\partial^*: \Omega^{k+1}(\mathbb{V})\to \Omega^{k}(\mathbb{V})$  defined pointwise via $\partial^*: \wedge^{k+1}\R^{n*}\otimes \mathbb{V}\to \wedge^{k}\R^{n*}\otimes \mathbb{V}$ as
 \begin{align*}
 \partial^*(Z_0&\wedge \dots  \wedge Z_k\otimes v)=\sum_{j} (-1)^{j+1}Z_0\wedge \dots  \wedge \hat Z_j\wedge \dots \wedge Z_k\otimes \rho(Z_j)(v),
 \end{align*}
and we denote by $\pi_i$ projections onto the cohomology spaces $$\mathcal{H}^i(\mathbb{V})=Ker(\partial^*)/Im(\partial^*).$$ In particular, $\mathcal{X}=\mathcal{H}^0(\mathbb{V})$, i.e., it holds $\mathbb{X}=Ker(\partial^*)/Im(\partial^*)=\mathbb{V}/Im(\partial^*)$ pointwise and this induces the representation $d\mu$.

The next ingredient is the splitting operator $\mathcal{L}_0: \mathcal{H}^0(\mathbb{V})\to Ker(\partial^*)= \Omega^{0}(\mathbb{V})$ defined as $\mathcal{L}_0=\id-Q \partial^*\nabla^{\rho\circ \alpha}$ for a particular operator $Q: Ker(\partial^*)\to Ker(\partial^*)$ that is polynomial in $\partial^* \nabla^{\rho\circ \alpha}$ with coefficients determined by the representation theory, \cite{casimir,relative}. Let us note that in the applications, these coefficients can be determined by the property that both $Q \partial^* \nabla^{\rho\circ \alpha}$ and $\partial^* \nabla^{\rho\circ \alpha} Q$ act as identity on $Im(\partial^*).$
Then the operator $$\mathcal{D}:=\pi_1\nabla^{\rho\circ \alpha}\mathcal{L}_0$$ for the tractor connection $\na^{\rho \circ \alpha}$ is the (standard) first BGG operator.

A difference of two $K$--invariant linear connections is given by an $H$--equivariant map $\psi: \fk \to \frak{gl}(\mathbb{V})$ satisfying $\psi(Y)=0$ for all $Y\in \fh$. If $\psi\in (\fk^*\otimes \frak{gl}(\mathbb{V}))^1$, $\psi(s)\in Im(\partial^*)$ for all $H$--equivariant functions $s: K\to \mathbb{V}$, then $\mathcal{D}=\pi_1 \nabla^{\rho\circ \alpha+\psi} \mathcal{L}_0$.
 Moreover, by result of \cite{new-norm,casimir},
there is a unique $\Psi \in (\fk^*\otimes \frak{gl}(\mathbb{V}))^2$ vanishing on $\fh$ such that $\partial^*R^{\rho\circ \alpha+\Psi}(s)=0$
 for all $s\in \mathbb{V}$.  This way we obtain $\Phi:=\rho\circ \alpha+\Psi$ and the corresponding connection $\nabla^{\Phi}$ is the prolongation connection.

The solutions of $\mathcal{D}$ are in bijective correspondence with parallel sections of the invariant connection $\nabla^{\Phi}$ and thus can be algebraically computed, \cite{KoNo}. In particular, one iteratively computes the sets
\begin{align}\label{holannih1} \tag{S0}
\mathbb{S}^0:=\{v\in \mathbb{V}: R^{\Phi}(\alpha(X_0)+\fp,\alpha(X_1)+\fp)(v)=0,\  X_0,X_1\in \fk\}
\end{align}
and
\begin{align}\label{holannih2} \tag{Sk}
\mathbb{S}^k:=\{v\in \mathbb{S}^{k-1}: \Phi(X)(v)\in \mathbb{S}^{k-1},\ X\in \fk\}.
\end{align}

Since $\mathbb{V}$ is finite--dimensional, we get $\mathbb{S}^k=\mathbb{S}^{k+1}=\dots =:\mathbb{S}$ for $k$ large enough as the set of local solutions. By the definition of $\mathbb{S}^0$, the map $\Phi: \fk\to \frak{gl}(\mathbb{V})$ restricts to the claimed representation $\Phi: \fk \to \frak{gl}(\mathbb{S})$. Then the claimed formula extends it locally to a section of $\mathbb{V}$ parallel for the connection $\nabla^{\Phi}$.  The $\fh$--equivariant projection $\pi: \mathbb{S}\to \mathbb{X}$ is induced by the inclusion $\mathbb{S}\subset \mathbb{V}$ and the projection $\pi_0: \mathbb{V}=Ker(\partial^*)\to \mathbb{X}$.
\end{proof}

Let us emphasize that the sets \eqref{holannih1},\eqref{holannih2} provide the infinitesimal holonomy of the connection $\na^\Phi$, \cite{KoNo}.

\subsection{Normal solutions and algorithm for computing all solutions
}\label{section3.3}

There is a special class of solutions of first BGG operators characterized in the homogeneous setting by the following property.

\begin{def*}
We call $s\in \mathbb{S}$ a \emph{normal solution} if $s$ belongs to a subrepresentation $\mathbb{N}$ of $\mathbb{S}$, where $\Phi|_\mathbb{N}=\rho\circ \alpha|_\mathbb{N}$.
\end{def*}

Let us verify that this coincides with the usual definition of normal solutions as parallel sections for the tractor connection.

\begin{prop}
Sections $s$ of the tractor bundle corresponding to normal solutions of first BGG operators are parallel sections for the tractor connection. In particular, they are annihilated by the action of the curvature of the Cartan connection.
\end{prop}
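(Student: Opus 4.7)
The plan is to unpack the two definitions of $\Phi$ and of normal solution, and observe that on the invariant subspace $\mathbb{N}$ the prolongation connection and the tractor connection literally coincide, so parallelism transfers from one to the other. Then the curvature annihilation is the standard consequence of parallelism.

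More precisely, recall from the proof of Theorem \ref{thm-loc} that every $K$--invariant linear connection on $\mathcal{V}=K\times_{\lambda\circ\iota(H)}\mathbb{V}$ has the form $\nabla^\Psi=D^\fk+\Psi$ for an $H$--equivariant map $\Psi:\fk\to\frak{gl}(\mathbb{V})$ agreeing with $d\lambda\circ\alpha$ on $\fh$, and that the prolongation connection corresponds to $\Phi=\rho\circ\alpha+\Psi$ while the tractor connection corresponds to $\rho\circ\alpha$. By Theorem \ref{thm-loc}, a (local) solution $s\in\mathbb{S}$ yields the $\mathcal{V}$--section realized by the $H$--equivariant function $\exp(X)\mapsto\exp(-\Phi(X))(s)$, and this section is by construction parallel for $\nabla^\Phi$.

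Now suppose $s$ is a normal solution, i.e. $s\in\mathbb{N}$ with $\Phi|_\mathbb{N}=\rho\circ\alpha|_\mathbb{N}$. Since $\mathbb{N}$ is a $\fk$--subrepresentation of $\mathbb{S}$ under $\Phi$, the parallel extension of $s$ stays inside $\mathbb{N}$ pointwise, so on this extension the operators $\Phi$ and $\rho\circ\alpha$ act identically. Therefore $\nabla^\Phi \tilde s=0$ reads $D^\fk\tilde s+(\rho\circ\alpha)\tilde s=0$, which is exactly parallelism of $\tilde s$ for the tractor connection $\nabla^{\rho\circ\alpha}$.

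Finally, for any linear connection the curvature annihilates parallel sections, so $R^{\rho\circ\alpha}(X,Y)\tilde s=0$ for all $X,Y$; since $R^{\rho\circ\alpha}=\rho\circ\kappa$, this reads $\rho(\kappa(X,Y))\tilde s=0$, i.e. the curvature of the Cartan connection annihilates $\tilde s$. The only point requiring care is that the argument really uses the invariance of $\mathbb{N}$ under all of $\Phi(\fk)$, not merely under $\Phi(\fh)$; this is guaranteed by the fact that $\mathbb{N}$ is a $\fk$--subrepresentation in the sense of Theorem \ref{thm-loc}, and is the only delicate step.
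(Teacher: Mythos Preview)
Your proof is correct and follows essentially the same approach as the paper's. Both arguments hinge on the observation that, since $\mathbb{N}$ is a $\Phi(\fk)$--invariant subspace on which $\Phi$ and $\rho\circ\alpha$ agree, the prolongation and tractor connections coincide on sections valued in $\mathbb{N}$; the paper phrases this via the coincidence of the infinitesimal holonomies on $\mathbb{N}$, while you argue it directly by noting that the parallel extension $\tilde s$ remains in $\mathbb{N}$ pointwise, but the content is the same.
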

\begin{proof}
Since the tractor connection is an invariant connection $\nabla^{\rho\circ \alpha}$ with the curvature $\rho \circ \kappa$, each parallel section is annihilated by the infinitesimal holonomy and in particular, by the action of the curvature of the Cartan connection. By construction of the prolongation connection, this implies that each parallel section for the tractor connection is a normal solution in $\mathbb{S}$. Conversely, if $s$ corresponds to a normal solution, then the condition $\Phi|_\mathbb{N}=\rho\circ \alpha|_\mathbb{N}$ implies that actions of infinitesimal holonomies of both the prolongation and tractor connection coincide on $\mathbb{N}$. Thus $s$ is a  parallel section for the tractor connection.
\end{proof}

Normal solutions have the following remarkable property. If $\Phi_1|_{\mathbb{N}_1}: \fk\to \frak{gl}(\mathbb{N}_1)$ and $\Phi_2|_{\mathbb{N}_2}: \fk\to \frak{gl}(\mathbb{N}_2)$ describe the normal solutions of first BGG operators corresponding to representations $\rho_1: \so\to \frak{gl}(\mathbb{V}_1)$ and $\rho_2:  \so\to \frak{gl}(\mathbb{V}_2)$, then $\Phi_1|_{\mathbb{N}_1}\otimes \Phi_2|_{\mathbb{N}_2}: \fk\to \frak{gl}(\mathbb{N}_1\otimes \mathbb{N}_2)$ describes the normal solutions of the first BGG operators corresponding to representation $\rho_1\otimes \rho_2:  \so\to \frak{gl}(\mathbb{V}_1\otimes \mathbb{V}_2).$
This is usually referred to as \emph{BGG coupling}, \cite{coupling}, and can be generalized to all the operations available from the theory of $\so$--representations.

Let us now summarize how we compute the representation $\Phi$ and the normal solutions in the practice. 
\noindent \\[2mm] {\bf (1)}
 The starting point is the normal conformal extension $\alpha: \fk\to \so$ of $(\fk,\fh)$ and a representation $\rho: \so\to \frak{gl}(\mathbb{V}).$
\noindent \\[2mm] {\bf (2)}
 We use the action of $\rho(E)$ to determine the grading of $\mathbb{V}$, the projective slot $\mathbb{X}$ and the projection $\pi_0: \mathbb{V}\to \mathbb{X}.$
\noindent \\[2mm] {\bf (3)}
 Then we consider $\Phi=\alpha$ and use the formulas used in definition of sets \eqref{holannih1} and \eqref{holannih2} to compute the infinitesimal holonomy $hol(\alpha)\subset \so$.
\noindent \\[2mm] {\bf (4)}
The normal solutions are elements of $\mathbb{V}$ annihilated by $\rho(hol(\alpha))$.
\noindent \\[2mm] {\bf (5)}
We compute the prolongation connection $\nabla^{\Phi}:=\nabla^{\rho\circ\alpha+\Psi}$ by induction with respect to the irreducible grading components of the map $\Psi$. 
\begin{itemize}
\item We start with $\psi_0=0$. 
\item In the induction step, we compute
$$\psi_k:=\psi_{k-1}-\frac{1}{c_k}q_i\big((\partial^*\otimes \id_{\mathbb{V}^*})R^{\rho\circ \alpha+\psi_{k-1}}\big)$$
for certain integers $c_k$, where $q_i$ denotes the projection to the lowest non--zero homogeneity $i$ that either can be determined by the representation theory, \cite{casimir}, or can be chosen directly so that it kills some component of $(\partial^*\otimes \id_{\mathbb{V}^*}) R^{\rho\circ \alpha+\psi_{k}}$ in the given homogeneity. 
\item Since the image of $\partial^*\otimes \id_{\mathbb{V}^*}$ does not lower the homogeneity and there is only a finite number of irreducible grading components in $\R^{n*}\otimes \frak{gl}(\mathbb{V})$, we get $\Psi:=\psi_k$ in finitely many steps. 
\end{itemize}
\noindent {\bf (6)}
We iteratively compute the sets  \eqref{holannih1} and \eqref{holannih2} and obtain the set $\mathbb{S}$ of all solutions. Then we restrict $\Phi$ and $\pi_0$ to $\mathbb{S}$ to obtain the description of the solutions of the first BGG operator from Theorem \ref{thm-loc}.

\subsection{Solutions of first BGG operators in local coordinates}\label{section3.4}

Let us show how to use the exponential coordinates and the $\fc$--(co)frame from Section \ref{loc_coord} to describe  the solutions of the first BGG operators in local coordinates.

\begin{thm} \label{thm-coord}
Let $(M,[g])$ be a homogeneous conformal geometry and $\pi: \mathbb{S}\to \mathbb{X}$ and $\Phi: \fk\to \frak{gl}(\mathbb{S})$ be maps describing (local) solutions of the BGG operator on the bundle $\mathcal{X}\to M$ according to Theorem \ref{thm-loc}. If 
$${\sf c}: \fc\to M, \ \ {\sf c}(X):= \exp(L_{1})\dots \exp(L_{j})\exp(X_{\fa})\exp(X_{\fn})o$$ are exponential coordinates compatible with some decomposition $\fk=\fl\oplus \fa\oplus \fn$, then for each $v\in \mathbb{S}$ 
\begin{align} \label{parsec} \tag{Sol}
\pi(\exp(-\Phi(X_{\fn}))\exp(-\Phi(X_{\fa}))\exp(-\Phi(L_{j}))\dots\exp(-\Phi(L_{1}))v)
\end{align}
is a (local) solution of the first BGG operator written in the local trivialization $\fc\times \mathbb{X}$ provided by a $\fc$--(co)frame from Proposition \ref{c-coframe}.
\end{thm}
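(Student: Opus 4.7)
The plan is to reduce the statement to iterating the single--exponential formula already established in Theorem \ref{thm-loc}. By that theorem, a solution of the first BGG operator on $\mathcal{X}$ lifts to a parallel section $s$ of the tractor bundle $\mathcal{V}=K\times_H\mathbb{V}$ for the prolongation connection $\na^\Phi$, which we treat as an $H$--equivariant function $s:K\to \mathbb{V}$ with $s(e)=v\in \mathbb{S}$. Under the local trivialization of $\mathcal{V}$ over $\fc$ induced by the lift $\tilde{\sf c}$, this section is represented by the function $X\mapsto s(\tilde{\sf c}(X))$, and the corresponding solution of the BGG operator in the trivialization $\fc\times \mathbb{X}$ is obtained by postcomposing with $\pi:\mathbb{S}\to \mathbb{X}$. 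It therefore suffices to evaluate $s(\tilde{\sf c}(X))$ for $X\in\fc$.

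The key ingredient is the parallelism ODE for $\na^\Phi=D^\fk+\Phi$. For each fixed $X\in \fk$ and $k\in K$, parallelism reads $\tfrac{d}{dt}s(k\exp(tX))=-\Phi(X)\,s(k\exp(tX))$ along the path $t\mapsto k\exp(tX)$, since $D^\fk_X s$ at $k$ is the derivative of $s$ along right multiplication by $\exp(tX)$. Integrating from $t=0$ to $t=1$ yields
\[
s(k\exp(X)) = \exp(-\Phi(X))\,s(k),
\]
which at $k=e$ recovers the formula stated in Theorem \ref{thm-loc}. Applying this identity repeatedly by peeling off the rightmost factor of $\tilde{\sf c}(X)=\exp(L_{1})\cdots\exp(L_{j})\exp(X_\fa)\exp(X_\fn)$, one obtains
\[
s(\tilde{\sf c}(X)) = \exp(-\Phi(X_\fn))\exp(-\Phi(X_\fa))\exp(-\Phi(L_j))\cdots \exp(-\Phi(L_1))\,v,
\]
and postcomposing with $\pi$ gives exactly \eqref{parsec}.

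I do not expect any significant obstacle once Theorem \ref{thm-loc} and Proposition \ref{c-coframe} are in place; the proof is essentially bookkeeping along the factors of the exponential product. The only point that deserves attention is the order of composition: each right multiplication by $\exp(Y)$ on $K$ multiplies the formula for $s$ by an additional factor $\exp(-\Phi(Y))$ on the \emph{left}, which reverses the order of the factors $(L_1,\dots,L_j,X_\fa,X_\fn)$ of $\tilde{\sf c}(X)$ into the order $(X_\fn,X_\fa,L_j,\dots,L_1)$ appearing in \eqref{parsec}. No convergence issue arises, because the whole computation is local in a neighborhood of $0\in\fc$ where ${\sf c}$ and $\tilde{\sf c}$ are defined.
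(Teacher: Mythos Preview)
Your proposal is correct and follows essentially the same approach as the paper, which simply states that the formula follows from Theorem \ref{thm-loc} applied in the chosen exponential coordinates. You have spelled out the mechanism the paper leaves implicit: the parallelism ODE $\tfrac{d}{dt}s(k\exp(tX))=-\Phi(X)s(k\exp(tX))$ integrates to $s(k\exp(X))=\exp(-\Phi(X))s(k)$, and iterating over the factors of $\tilde{\sf c}(X)$ produces the order reversal in \eqref{parsec}.
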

\begin{proof}
Clearly, $\fc$--(co)frame from Proposition \ref{c-coframe} induces local trivialization $\fc\times \mathbb{X}$ of the bundle $\mathcal{X}.$ The formula for the solution as a function valued in $\mathbb{X}$ follows for our choice of local coordinates  and from Theorem \ref{thm-loc}.
\end{proof}

Let us describe in more detail how the $\fc$--(co)frame provides the (local) trivialization $\fc\times \mathbb{X}$ of the bundle $\mathcal{X}\to M$ with standard fiber $\mathbb{X}$.
\noindent \\[1mm] {$\bullet$}
For $\mathbb{X}$ with trivial action of the grading element $E$, the trivialization provided by a $\fc$--(co)frame has the usual interpretation, i.e., a $\fc$--frame is a trivialization of the tangent bundle,  a $\fc$--coframe is a trivialization of the cotangent bundle and so on for their tensor products. 
\noindent \\[1mm] {$\bullet$}
In this article, we use the trivializations involving spin representations of $\frak{co}(p,q)$ as a description of the spin bundles and do not discuss how these relate with other descriptions of spin bundles.
\noindent \\[1mm] {$\bullet$}
For $\mathbb{X}=\R[w]$, the function $f: \fc\to \R[w]$ corresponds to the section $f\cdot \epsilon^{\frac{-w}{n}}$ of $\mathcal{E}[w]$ in the trivialization provided by the $\fc$--(co)frame $e^*=(e^1,\dots, e^n)$, where $\epsilon:=e^1\wedge \dots \wedge e^n$.
\noindent \\[1mm] {$\bullet$}
In general, the trivialization $\fc\times \mathbb{X}$ can be interpreted as a tensor product of these. 
\\[2mm]
Our exponential coordinates can provide a global covering $\fc\to M$. This leads to an alternative way to determine the set $\mathbb{G}$ of the global solutions. One just needs to check whether the formula \eqref{parsec} assigns the same value to points in $\fc$ covering the same points of $M$.

\subsection{Solutions of first BGG operators for  G\"odel metrics} \label{sol-godel}
Consider the manifolds $M$ with the conformal classes of the G\"odel metrics as described in Sections \ref{ext_godel} and \ref{coord-godel}.

\begin{prop}\label{prop-sol-godel}
\begin{enumerate}
\item There are no normal solutions for any first BGG operator. In particular, there are no Einstein scales nor twistor spinors.
\item The Lie algebra $\fk$ contains all of the conformal Killing fields.
\item There is a $1$--parameter family $$9v_1\epsilon^{\frac{-1}2}=9v_12^{\frac14}e^{\frac{-x}2}(dt\wedge dx \wedge dy \wedge dt)^{\frac{-1}2}$$ of solutions of the first BGG operator on $\mathcal{E}[2]$.
\item There is a $14$--parameter family of conformal Killing $2$--tensors that decomposes into the following $K$--invariant families. 
 \noindent \\ {$\bullet$}
The family of $K$--invariant Killing 2--tensors of the G\"odel metrics
\begin{gather*}
({\scriptstyle \frac14}v_3-3v_2+v_1)\partial_t^2+(2e^{-x}v_2)(\partial_t\partial_y+\partial_y\partial_t)-({\scriptstyle \frac14}v_3-v_1)(\partial_t\partial_z+\partial_z\partial_t)-
\\
v_2\partial_x^2-2v_2e^{-2x}\partial_y^2+({\scriptstyle \frac14}v_3+v_2+v_1)\partial_z^2.
\end{gather*}
{$\bullet$}
The family of Killing $2$--tensors of the G\"odel metrics
\begin{gather*}
-8\sqrt{2}e^{-x}(v_{11}+v_{14})(\partial_t\partial_z+\partial_z\partial_t)+2\sqrt{2}(2yv_{11}+2yv_{14}+v_{10}+v_{13})(\partial_x\partial_z+
\\ \partial_z\partial_x)-
2\sqrt{2}e^{-x}(e^{x}y^2v_{11}+e^{x}y^2v_{14}+ye^{x}v_{10}+ye^{x}v_{13}+e^{x}v_{12}+e^{x}v_9-
\\
2e^{-x}v_{11}-
2e^{-x}v_{14})(\partial_y\partial_z+\partial_z\partial_y).
\end{gather*}
{$\bullet$} The family that does not contain any Killing 2--tensors of the G\"odel metrics
\begin{gather*}
(6yv_7+12v_8y^2+2v_6+3e^{2x}v_4+396e^{-2x}v_8+3v_8e^{2x}y^4+3y^3v_7e^{2x}+3y^2e^{2x}v_6+
\\
3ye^{2x}v_5)\partial_t^2- 48 e^{-x}(4v_8y+v_7)(\partial_t\partial_x+\partial_x\partial_t)
-6e^{-x}(v_8e^{2x}y^4+y^3v_7e^{2x}+y^2e^{2x}v_6+
\\
ye^{2x}v_5+36e^{-2x}v_8-12v_8y^2+e^{2x}v_4-6yv_7-2v_6)(\partial_t\partial_y+\partial_y\partial_t)+(3v_8e^{2x}y^4+
\\
108v_8y^2+12e^{-2x}v_8+3y^3v_7e^{2x}+54yv_7+3y^2e^{2x}v_6+3ye^{2x}v_5+3e^{2x}v_4+18v_6)\partial_x^2-
\\
12e^{-x}(4y^3e^{x}v_8+3e^{x}y^2v_7-8y e^{-x}v_8+2ye^{x}v_6-2 e^{-x}v_7+e^{x}v_5)(\partial_x\partial_y+\partial_y\partial_x)+
\\
6e^{-2x}(5v_8e^{2x}y^4+5y^3v_7e^{2x}+5y^2e^{2x}v_6+5ye^{2x}v_5-12v_8y^2+5e^{2x}v_4+20e^{-2x}v_8-
\\
6yv_7-2v_6)\partial_y^2+(6yv_7+12v_8y^2+2v_6+3e^{2x}v_4+12e^{-2x}v_8+3v_8e^{2x}y^4+
\\
3y^3v_7e^{2x}+3y^2e^{2x}v_6+3ye^{2x}v_5)\partial_z^2.
\end{gather*}
\item There is a $2$--parametric family 
\begin{gather*}
\Big((\cos({\scriptstyle \frac{\sqrt{2}}{2}}z)v_1-\sin({\scriptstyle \frac{\sqrt{2}}{2}}z)v_2)(dtdz-dzdt)+{\scriptstyle \frac{\sqrt{2}}{2}}exp(x)(\cos({\scriptstyle \frac{\sqrt{2}}{2}}z)v_2+
\sin({\scriptstyle \frac{\sqrt{2}}{2}}z)v_1))
\cdot 
\\
(dxdy-
dydx)-e^{x}(sin({\scriptstyle \frac{\sqrt{2}}{2}}z)v_2-cos({\scriptstyle \frac{\sqrt{2}}{2}}z)v_1)(dydz-dzdy)\Big)\cdot 
\\
2^{\frac38}e^{\frac{-3x}4}(dt\wedge dx \wedge dy \wedge dt)^{\frac{-3}4}
\end{gather*} 
of conformal Killing--Yano $2$--forms. None of them is normal nor Killing--Yano $2$--form of the G\"odel metric.
\end{enumerate}
\end{prop}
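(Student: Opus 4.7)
The plan is to specialise the algorithm of Section \ref{section3.3} to the data of Lemma \ref{ext-godel}, and then read off the answers in the $(t,x,y,z)$ coordinates of Lemma \ref{lem-coord-godel} by means of Theorem \ref{thm-coord}. For part (1), I first compute the infinitesimal holonomy $hol(\alpha)\subset \sodc$ directly from Lemma \ref{ext-godel} by successively bracketing enough values of $\kappa$ with images of $\alpha$; because the curvature in Lemma \ref{ext-godel} is sufficiently generic, one checks that this generates all of $\sodc$, so for every non--trivial irreducible representation $\rho:\sodc\to \frak{gl}(\mathbb{V})$ encoding a first BGG operator the subspace of $\mathbb{V}$ annihilated by $\rho(hol(\alpha))$ is trivial. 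By the characterisation of normal solutions as parallel sections for the tractor connection, this rules out normal solutions of every first BGG operator, in particular of those governing Einstein scales ($\mathbb{V}=\mathbb{T}$) and twistor spinors ($\mathbb{V}=\mathbb{D}^{\pm}$).

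For parts (2)--(5), I run step (5) of the algorithm of Section \ref{section3.3} on the appropriate tractor representations: $\mathbb{V}=\wedge^{2}\mathbb{T}$ for conformal Killing fields, the representation with projective slot $\R[2]$ for scales in $\mathcal{E}[2]$, $\mathbb{V}=\boxtimes(\bigodot^{2}\sodc)$ for conformal Killing $2$--tensors, and $\mathbb{V}=\wedge^{3}\mathbb{T}$ for conformal Killing--Yano $2$--forms. Once the prolongation map $\Phi$ is produced in each case, the sets $\mathbb{S}^{0}\subset \mathbb{S}^{1}\subset\cdots\subset\mathbb{S}$ defined in \eqref{holannih1}, \eqref{holannih2} are simultaneous kernels of explicit linear maps on $\mathbb{V}$, and solving these linear systems yields the stated dimensions $5$, $1$, $14$ and $2$ of $\mathbb{S}$. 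The verification that $\fk$ already contains every conformal Killing field amounts to matching a basis of the $5$--dimensional $\mathbb{S}$ with $k_1,\dots,k_4,h_1$ from Section \ref{ext_godel}.

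To convert each solution $v\in\mathbb{S}$ into an explicit section, I substitute into formula \eqref{parsec} of Theorem \ref{thm-coord}; since $\fc\cap\fl=0$ by Lemma \ref{lem-coord-godel}, this reduces to $X\mapsto \pi(\exp(-\Phi(X_{\fn}))\exp(-\Phi(X_{\fa}))v)$. Composing with the coordinate change $(a_1,a_2,a_3,a_4)\mapsto(a_2,2a_3,a_4 e^{-2a_3},a_1)$ from the proof of Lemma \ref{lem-cartan-godel}, and multiplying by the density factor $\epsilon^{-w/4}$ determined by the weight $w$ of the projective slot, produces the closed--form expressions displayed in (3)--(5). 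The decomposition of the $14$--dimensional family in (4) into three $K$--invariant subfamilies is read off as the decomposition of $\mathbb{S}$ into $\fk$--subrepresentations under $\Phi$, and whether a given subfamily contains genuine Killing tensors of the chosen representative $g$ is checked by direct substitution into the Killing equation.

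The main obstacle is computational rather than conceptual: the representation $\boxtimes(\bigodot^{2}\sodc)$ is $84$--dimensional, so both the iterative construction of $\Psi$ in step (5) of Section \ref{section3.3} and the iteration of \eqref{holannih2} must be carried out by a computer algebra system, with the grading and $\fh$--isotypic decomposition exploited to keep the intermediate expressions manageable and the final polynomial formulas in $e^{\pm x}$ and $y$ recognisable as the displayed families.
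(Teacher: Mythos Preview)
Your plan is essentially the paper's own proof: compute $hol(\alpha)=\sodc$ from Lemma \ref{ext-godel} for part (1), then run the algorithm of Section \ref{section3.3} in a computer algebra system on the four tractor modules and translate via Theorem \ref{thm-coord} and the coordinate change of Lemma \ref{lem-cartan-godel}. Two minor points: your chain of inclusions is reversed (by \eqref{holannih2} one has $\mathbb{S}^{0}\supset\mathbb{S}^{1}\supset\cdots\supset\mathbb{S}$, not $\subset$); and the paper shortcuts part (2) by invoking the known formula $\Psi(X)=-\kappa(\alpha(X),\cdot)$ for the adjoint tractor bundle rather than running step (5), and it additionally checks periodicity in $t$ with period $4\sqrt{2}\pi$ to confirm that the solutions in (3)--(5) are global on $S^{1}\times\R^{3}$, which you should also do.
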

\begin{proof}
Let us recall that we computed in Lemma \ref{ext-godel} the normal conformal extension $\alpha:\fk \to \sodc$ corresponding to the conformal class of G\"odel metrics as $\alpha(x_1,x_2,x_3,x_4,x_5)=$
$$
 \left[ \begin {smallmatrix} 0&-\frac{1}{2}{ x_1}+\frac{1}{6}{ x_4}&-\frac{1}{12}{
 x_2}&-\frac{1}{12}{ x_3}&\frac{1}{6}{ x_1}-\frac{1}{8}{ x_4}&0
\\ 
{ x_1}&0&\frac{\sqrt {2}}{4}{ x_3}&-\frac{\sqrt {2}}{4}
{ x_2}&0&-\frac{1}{6}{ x_1}+\frac{1}{8}{ x_4}\\ { x_2}&\frac{\sqrt {2}}{2}{ x_3}&0&\frac{\sqrt {2}}{2}{ x_1}-{ x_3}-\frac{\sqrt {2}}{4}
{ x_4}+\sqrt {2}{ x_5}&-\frac{\sqrt {2}}{4}{ x_3}&\frac{1}{12}{ x_2}
\\ 
{ x_3}&-\frac{\sqrt {2}}{2}{ x_2}&-\frac{\sqrt {2}}{2}{
 x_1}+{ x_3}+\frac{\sqrt {2}}{4}{ x_4}-\sqrt {2}{ x_5}&0&\frac{\sqrt {2}}{4}
{ x_2}&\frac{1}{12}{ x_3}\\ { x_4}&0&-\frac{1}{2}
\sqrt {2}{ x_3}&\frac{\sqrt {2}}{2}{ x_2}&0&\frac{1}{2}{ x_1}-\frac{1}{6}{ x_4}
\\ 
0&-{ x_4}&-{ x_2}&-{ x_3}&-{ x_1}&0
\end {smallmatrix} \right] 
$$
with the curvature $\kappa(\alpha(x_1,x_2,x_3,x_4,x_5),\alpha(y_1,y_2,y_3,y_4,y_5))=$
$$
 \left[ \begin {smallmatrix} 
0&-\frac{\sqrt {2}}{2} z_{23} &
- \frac{\sqrt {2}}{4}z_{13}
-\frac{\sqrt {2}}{8}z_{34}
&\frac{\sqrt{2}}{4}z_{12}+\frac{\sqrt{2}}{8}z_{24} &\frac{\sqrt {2}}{4} z_{23} &0
\\ 
0&\frac13 z_{14}&-\frac{1}{6}z_{12}&
-\frac{1}{6}z_{13}&0&-\frac{\sqrt {2}}{4}
 z_{23}
\\
0&-\frac{1}{6}z_{24}&0&\frac13z_{23}&\frac{1}{6}z_{12}&\frac{\sqrt {2}}{8}z_{34}+\frac{\sqrt {2}}{4}
z_{13}
\\ 
0&-\frac{1}{6}z_{34}&\frac13z_{23}&0&\frac{1}{6}z_{13}&-\frac{\sqrt {2}}{8}z_{24}-\frac{\sqrt {2}}{4}z_{12}
\\ 
0&0&\frac{1}{6}z_{24}&\frac{1}{6}z_{34}&-\frac13z_{14}&\frac{\sqrt {2}}{2} z_{23}
\\ 
0&0&0&0&0&0
\end {smallmatrix} \right],
$$
where we write $z_{ij}=x_iy_j-x_jy_i.$ 

The formulas for $\alpha$ and $\kappa$ allow us to compute directly the infinitesimal holonomy
$$
hol(\alpha)=\sodc
$$
and therefore, there are no normal solutions and claim (1) follows.

To show the claim (2), let us start with the conformal Killing vectors, i.e., $\mathbb{V}=\sodc$, $\mathbb{X}=\R^4$. It is well known that the tensor $\Psi$ determining the prolongation connection is the insertion $X\mapsto -\kappa(\alpha(X),.)$ into curvature, \cite{deform}. Thus, $\Phi(X)(\alpha(Y)+W)=\{\alpha(X),W\}+\alpha([X,Y])$ for $X,Y\in \fk,W\in \fp$ and the curvature of the prolongation connection simplifies as \begin{gather*}
R^{\Phi}(\alpha(X),\alpha(Y))(\alpha(Z)+W)=
\{\kappa(\alpha(X),\alpha(Y)),W\}-
\\
\kappa(\alpha(X),\{\alpha(Y),W\})+\kappa(\alpha(Y),\{\alpha(X),W\})
\end{gather*}
using the Jacobi identity. In particular, this ensures that all elements of the image of $\alpha$ correspond to conformal Killing fields and it is easy to compute that there is no nonzero $W$ annihilated by $R^{\Phi}$ and the claim (2) follows.

We compute the remaining claims (3), (4) and (5) in the following steps. Firstly,  we realize in Maple the algorithm from Section \ref{section3.3} and describe the solutions of the first BGG operators according to Theorem \ref{thm-loc}. Next, in order to present the result obtained according to Theorem \ref{thm-coord} in the exponential coordinates $\fc\to M$ from Lemma \ref{lem-coord-godel} in the original coordinates, we used the transition $t=a_2,  x=2a_3, y=a_4e^{-2a_3}, z=a_1$ between these two coordinates from Lemma \ref{lem-cartan-godel}. This translates the functions $\fc\to \mathbb{X}$ to functions $M\to \mathbb{X}$ and provides the solutions using the following $\fc$--(co)frame we computed in Lemma \ref{lem-cartan-godel}
\begin{gather*}
e_1:=\partial_t+\partial_z,\ \ e_2:=\partial_x, \ \ e_3:=\sqrt{2}(e^{-x}\partial_y-\partial_t), \ \ e_4:=-\frac12\partial_t+\frac12\partial_z, \\
e^1:=\frac12 (dt +e^x dy+dz), \ \ e^2:=dx, \ \ e^3:=\frac{e^x}{\sqrt{2}} dy, \ \  e^4:=-dt-e^xdy+dz.
\end{gather*}
Let us emphasize that for the global existence of the solutions in the case $M=S^1\times \R^3$, we checked whether the solutions are periodic in $t$ with the period $4\sqrt{2}\pi$ and indeed, this is the case for the solutions we computed.

For the claim (3), we consider $\mathbb{V}=\bigodot^2_0\mathbb{T}$ which does not correspond to a well--known BGG operator. Therefore in Appendices \ref{bgg-const} and \ref{apendix}, we provide some more details about this BGG operator and the prolongation connection. We do not give such details for the remaining cases, because it would be even more complicated.

Altogether, we compute that $\mathbb{S}$ is trivial representation $\Phi$ of $\fk$ and consist of the following elements of $\mathbb{V}$
\[
\left[ \begin {smallmatrix} {\frac {7}{16}}{ v_1}&0&0&0&0&{ \frac{1}{4}}
{ v_1}\\ 0&{\frac {9}{8}}{ v_1}&0&0&-{ \frac{5}{4}}{
 v_1}&0\\ 0&0&{ v_1}&0&0&0\\ 0
&0&0&{ v_1}&0&0\\ 0&-{ \frac{5}{4}}{ v_1}&0&0&{ \frac{9}{2}}{ 
v_1}&0\\ { \frac{1}{4}}{ v_1}&0&0&0&0&9{ v_1}
\end {smallmatrix} \right]\subset \left[ \begin{smallmatrix}
\mathbb{V}_{2}& *& *\\ \mathbb{V}_1 & \mathbb{V}_0 & *\\ *& \mathbb{V}_{-1}& \mathbb{X}=\mathbb{V}_{-2}\end{smallmatrix} \right].
\]

Therefore, the solutions are constant functions $\fc\to \R[2]$, which have the claimed form as a section of $\mathcal{E}[2]$.

For the claim (4), we consider $\mathbb{V}=\boxtimes(\bigodot^2\sodc)$ and fix the parametrization $(s_1,\dots,s_9)$ of the projective slot given by the $\fc$--frame as
\begin{gather*}
\bigodot{}^2_0TM=\{ s_1 e_1^2+s_2e_1e_2+s_2e_2e_1+s_3e_1e_3+s_3e_3e_1+s_4e_1e_4+s_4e_4e_1\\
+s_5e_2^2+s_6e_2e_3+s_6e_3e_2+s_7e_2e_4+s_7e_4e_2\\
- (2s_4+s_5)e_3^2+s_8e_3e_4+s_8e_4e_3+s_9e_4^2 \}.
\end{gather*}
We compute that the radical acts trivially in the representation $\Phi$ on the solutions and they exist globally in both cases $M=\R^4$ and $M=S^1\times \R^3$. Therefore, we can express the results of our computations as representations $\Phi$ of $\frak{sl}(2,\R)$, which we analyze using the standard procedures from the representation theory. Altogether, $dim(\mathbb{S})=14$ and
\begin{enumerate}
\item there is a $3$--dimensional trivial $\frak{sl}(2,\R)$--representation $\R^3=\langle v_1,v_2,v_3 \rangle$ with the projection 
$$(v_1,0,0,v_2,-v_2,0,0,0,v_3)$$
that correspond to the first family of solutions,
\item there is a $5$--dimensional $\frak{sl}(2,\R)$--representation $\bigodot^4 \R^{2*}$ parametrized as $v_4 y_1^4+v_5 y_1^3y_2+v_6y_1^2y_2^2+v_7y_1y_2^3+v_8y_2^4$ with the projection
\begin{gather*}
(24v_8+4v_6+6v_4,
 -12v_7-6v_5,
 -24\sqrt{2}v_8+6\sqrt{2}v_4,
 -36v_8-6v_6-9v_4, 
12v_8+
\\
18v_6+3v_4, 
12\sqrt{2}v_7-6\sqrt{2}v_5, 
24v_7+12v_5, 
48\sqrt{2}v_8-12\sqrt{2}v_4, 
96v_8+16v_6+24v_4)
\end{gather*}
that corresponds to the last family of solutions, and
\item  there is a $6$--dimensional $\frak{sl}(2,\R)$--representation consisting of two copies of $\bigodot^2 \R^{2*}$ that we parametrize as $v_9 y_1^2+v_{10} y_1y_2+v_{11}y_2^2$ and $v_{12} y_1^2+v_{13} y_1y_2+v_{14}y_2^2$ with the projection
\begin{gather*}
\big(-\sqrt{2}(2v_{11}+v_{12}+2v_{14}+v_9),
 \sqrt{2}(v_{10}+v_{13}), 
2v_{14}-v_{12}+2v_{11}-v_9, 
0, 
0, 
0, 
\\
2\sqrt{2}(v_{10}+v_{13}), 
4v_{14}-2v_{12}+4v_{11}-2v_9, 
4\sqrt{2}(2v_{11}+v_{12}+2v_{14}+v_9)\big)
\end{gather*}
that corresponds to the second family of solutions.
\end{enumerate}

For the claim (5), the situation with our computations is analogous to the proof of claim (4). We fix the parametrization $(s_1,\dots,s_6)$ of the projective slot using the $\fc$--coframe as
\begin{gather*}
\wedge^2T^*M[3]=\{ \epsilon^{\frac{-3}4}(s_1(e^1e^2-e^2e^1)+ s_2(e^1e^3-e^3e^1)+ s_3(e^1e^4-e^4e^1)+ 
\\
s_4(e^2e^3-e^3e^2)+ s_5(e^2e^4-e^4e^2)+ s_6(e^3e^4-e^4e^3)) \}.
\end{gather*}
The coordinate $a_1$ in the radical is the only part with nontrivial action $\Phi$ on $\mathbb{S}$ and thus solutions 
exist globally in both cases $M=\R^4$ and $M=S^1\times \R^3$. Altogether, $dim(\mathbb{S})=2$ and there is a $2$--dimensional representation with projection 
$$(0,0,v_1,v_2,0,0),$$
where 
\begin{gather*}
\exp\big(-\Phi(a_1,a_2,a_3,a_4,a_5)\big)(0,0,v_1,v_2,0,0)=
\\
(0,0,\cos(\frac{\sqrt{2}}{2}a_1)v_1-
\sin(\frac{\sqrt{2}}{2}a_1)v_2 , \sin(\frac{\sqrt{2}}{2}a_1)v_1+\cos(\frac{\sqrt{2}}{2}a_1)v_2,0,0)
\end{gather*}
and the claim follows.
\end{proof}

To provide some more insight into the algorithm from Section \ref{section3.3}, let us describe how the claim `There are no Einstein scales nor twistor spinors on $M$ with the conformal class of the G\"odel metrics' can be proved using steps (1),(2),(5),(6) of the algorithm. Of course, the prolongation connection coincides with the tractor connection in these two cases and thus step (5) is trivial and $\Psi=0$.

Let us first consider the standard representation $\mathbb{T}=\R^{6}$, where $\rho$ is just multiplication by the given matrix in $\sodc$. Thus the eigenspaces of the action $\rho(E)$ have $(1,4,1)$--block structure $$
\left[ \begin{smallmatrix}\mathbb{T}_1\\ \mathbb{T}_0\\\mathbb{T}_{-1}\end{smallmatrix} \right]$$ with $\mathbb{X}=\mathbb{T}_{-1}$ being the projective slot. Then we immediately see that the curvature $\rho \circ \kappa$ annihilates only $\mathbb{T}_1$ and thus $\mathbb{S}^0=\mathbb{T}_1$. Since the image of $\alpha$ does not preserve $\mathbb{T}_1$, we conclude that $\mathbb{S}^1=0$ and there are no normal solutions. 

Further, let us consider the spin representation $\sodc\to \frak{su}(2,2)\subset \frak{gl}(4,\C)$ (acting on $\mathbb{D}=\C^4$) as
\begin{gather*}
 \left[ \begin {smallmatrix} a&b&c_1&c_2&d&0\\
u&a_{11}&e_{11}&e_{12}&0&-d
\\ v_1&c_{11}&0&b_{12}&-e_{11}&-c_1
\\ v_2&c_{21}&-b_{12}&0&-e_{12}&-c_2
\\ w&0&-c_{11}&-c_{21}&-a_{11}&-b
\\ 0&-w&-v_1&-v_2&-u&-a
\end{smallmatrix} \right]\mapsto 
\\
 \left[ \begin {smallmatrix} 
{ \frac12}(a+a_{11}- ib_{12})&-2\sqrt {2}(e_{11}+ie_{12})&-{ \frac{\sqrt {2}}{2}}(c_1+i
c_2)&4d\\ 
- { \frac{\sqrt {2}}{8}} (c_{11}-ic_{21})&\frac{1}{2}(a-a_{11}+ib_{12})&- { \frac14} b&- { \frac{\sqrt {2}}{2}} (c_1-ic_2)\\ 
-\frac{\sqrt {2}}{2}(v_1-i v_2)&-4
u&-({ \frac{1}{2}}a- a_{11}-ib_{12})&2\sqrt {
2}(e_{11}-ie_{12})\\ 
{ \frac14} w&- { \frac{\sqrt {2}}{2}} (v_1+iv_2)& { \frac{\sqrt {2}}{8}}( c_{11}
+i c_{21})&- { \frac12} (a+ a_{11}+ib_{12})\\ 
\end {smallmatrix} \right],
\end{gather*}
where $\frak{su}(2,2)$ corresponds to the pseudo--Hermitian form $$((z_0,z_1,z_2,z_3),(w_0,w_1,w_2,w_3))=z_0\bar{w}_3+z_3\bar{w}_0+z_1\bar{w}_2+z_2\bar{w}_1.$$ Thus $\rho\circ \alpha(x_1,x_2,x_3,x_4,x_5)$ equals to
\[
 \left[ \begin {smallmatrix} 
-iz& -x_3+i x_2 & { \frac{\sqrt{2}}{24}}(x_2+ix_3)& { \frac{2}{3}}x_1-{ \frac{1}{2}}x_4\\
- { \frac{1}{8}}(x_3+ix_2)&iz& { \frac{1}{8}}x_1-{ \frac{1}{24}}x_4& { \frac{\sqrt{2}}{24}}(x_2-ix_3)\\
  -{ \frac{\sqrt{2}}{2}}(x_2-ix_3)& -4x_1&i z & x_3+i x_2\\
  { \frac{1}{4}}x_4&  -{ \frac{\sqrt{2}}{2}}(x_2+i x_3)& { \frac{1}{8}}(x_3-i x_2)&-iz
\end {smallmatrix} \right],
\]
 where $z= { \frac{\sqrt{2}}{4}}x_1-{ \frac{1}{2}}x_3-{ \frac{\sqrt{2}}{8}}x_4+{ \frac{\sqrt{2}}{2}}x_5$, and writing $z_{ij}=x_iy_j-x_jy_i$, $\rho\circ \kappa$ equals to
\[
\left[ \begin {smallmatrix} 
\frac{1}{6}(z_{14}-iz_{23})&
\frac{\sqrt{2}}{3}(z_{12}+iz_{13}) &
\frac{1}{4}(-z_{13}+iz_{12})+\frac{1}{8}(-z_{34}+iz_{24})&
-\sqrt {2} z_{23} 
\\ 
\frac{\sqrt {2}}{48} (z_{24}-iz_{34}) &
\frac{1}{6}(-z_{14}+iz_{23})&
-\frac{\sqrt {2}}{8}
z_{23} &
-\frac{1}{4}(z_{13}+iz_{12})-\frac{1}{8}(z_{34}+iz_{24})
\\ 
0&0&\frac{1}{6}(z_{14}+iz_{23})&
\frac{\sqrt {2}}{3}  (-z_{12}+iz_{13}) 
\\ 
0&0&
-\frac{ \sqrt {2}}{48} (z_{24}+iz_{34}) &
-\frac{1}{6}z_{14}
\end {smallmatrix}
 \right] 
.
\]
We again see that there are no normal solutions.

\section{Applications of normal solutions and holonomy reductions} \label{section4}

\subsection{Holonomy reductions}

Let $G$ be a Lie group with the Lie algebra $\so$ such that the representation $\rho: \so\to \frak{gl}(\mathbb{V})$ integrates to a representation $\lambda:G\to Gl(\mathbb{V}).$ Consider a homogeneous conformal geometry $(K/H,[g])$ with an associated conformal extension $\alpha$ of $(\fk,\fh)$ such that $\alpha$ restricted to $\fh$ integrates to a Lie group homomorphism $\iota: H\to P$. Then connections on the tractor bundle $\mathcal{V}:=K\times_{\lambda \circ\iota(H)} \mathbb{V}$ are in one--to--one correspondence with $Gl(\mathbb{V})$--principal connections on the bundle $K\times_{\lambda \circ\iota(H)} Gl(\mathbb{V}).$ The tractor connection $\nabla^{\rho\circ \alpha}$ provides a reduction $K\times_{\lambda \circ\iota(H)} \lambda(G) \subset K\times_{\lambda \circ\iota(H)} Gl(\mathbb{V})$. For this reduction, the tractor bundle admits a non--linear decomposition into $G$--orbits $K\times_{\lambda \circ\iota(H)} \mathcal{O}_{[v]}\subset \mathcal{V}$ of type $\mathcal{O}_{[v]}=G/G_v=\lambda(G)v\subset \mathbb{V}$, where $G_v$ is the stabilizer of $v$.

\begin{def*}
We say that an $H$--equivariant function $s: K\to \mathcal{O}_{[v]}\subset \mathbb{V}$ parallel with respect to the induced (non--linear) connection on $K\times_{\rho\circ \iota(H)}\mathcal{O}_{[v]}$ is a \emph{holonomy reduction of $G$--type} $\mathcal{O}_{[v]}$.
\end{def*}

If we consider the Cartan geometry $(\ba=K\times_{\iota(H)}P,\omega_\alpha)$ of type $(G,P)$ on $M=K/H$ as described in Section \ref{cartan} by the maps $\alpha$, $\iota$ and element $u\in \ba$, then the holonomy reduction of $G$--type $\mathcal{O}_{[v]}$ can be naturally extended to $P$--equivariant function $s: \ba\to \mathcal{O}_{[v]}$. Then for any coset $\beta$ in $P\backslash G/G_v$ with the representative $w\in \mathbb{V}$, there is  \begin{itemize}
\item an initial submanifold $M_\beta$ of $M$ consisting of $kH$ such that $\lambda(p)^{-1}(s(k))=w$ for some $p\in P$, and 
\item a $P_w:=P\cap G_w$--bundle $\ba_w$ over $M_\beta$ consisting of all $kup\in \ba$ such that $\lambda(p)^{-1}(s(k))=w.$ 
\end{itemize}
This defines a \emph{curved orbit decomposition} $M=\bigcup_{\beta\in P\backslash G/G_v}M_\beta$ to $P$--types $\beta$. The basic results on the holonomy reductions \cite[Section 2.3]{hol} and \cite[Theorem 2.6]{hol} can be reformulated  in the homogeneous setting as follows.

\begin{prop}\label{hol-red}
Let $s: K\to \mathbb{V}$ be an $H$--equivariant section corresponding to a normal solution $v\in \mathbb{N}$. Then $s$ restricts to a holonomy reduction of type $\mathcal{O}_{[v]}$. Moreover, for any representative $w\in \mathbb{V}$ of $P$--type $\beta$, there is Cartan geometry $(\ba_{w},\omega_w)$ of type $(G_w,P_w)$ uniquely determined by the property $\omega_w:=\omega_\alpha\circ Tj$ for the natural inclusion $j: \ba_{w}\to K\times_{\iota(H)} P$. For $kup\in \ba_w$, $\Ad_p^{-1}(\alpha(\fk))\cap \fg_w$ is the Lie algebra of the Lie subgroup of $K$ preserving the Cartan geometry $(\ba_{w},\omega_w).$

Conversely,  the inclusion $K\times_{\lambda \circ\iota(H)} \mathcal{O}_{[v]}\subset \mathcal{V}$ induced by a holonomy reduction $s$ of type $\mathcal{O}_{[v]}$ provides a section of the tractor bundle parallel for the tractor connection and thus a normal solution $v=s(e)\in \mathbb{N}$ of the corresponding first BGG operator.
\end{prop}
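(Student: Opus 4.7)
The plan is to reduce everything to the single observation that for $v\in\mathbb{N}$ the prolongation connection $\nabla^{\Phi}$ agrees with the tractor connection $\nabla^{\rho\circ\alpha}$ on the subrepresentation $\mathbb{N}$, so that the $H$--equivariant function $s\colon K\to\mathbb{V}$ attached to $v$ is parallel for $\nabla^{\rho\circ\alpha}$. This was already established in the proposition preceding the definition of normal solutions. Since the tractor connection is induced from a $G$--principal connection on the reduction $K\times_{\lambda\circ\iota(H)}\lambda(G)\subset K\times_{\lambda\circ\iota(H)}Gl(\mathbb{V})$, parallel transport acts by elements of $\lambda(G)$ and therefore preserves every $G$--orbit $\mathcal{O}_{[v]}\subset\mathbb{V}$. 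As $s$ is parallel with initial value $v$, its image lies in $\mathcal{O}_{[v]}$, which is precisely the definition of a holonomy reduction of type $\mathcal{O}_{[v]}$.

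For the Cartan--geometric data $(\ba_w,\omega_w)$, I would extend $s$ to the $P$--equivariant function $\tilde s\colon\ba\to\mathbb{V}$, $kup\mapsto \lambda(p)^{-1}s(k)$, and recognise $\ba_w$ as the preimage $\tilde s^{-1}(w)$. The $P$--action permutes fibres of $\tilde s$ along the $P$--orbit of $w$, and the stabiliser of $w$ in $P$ is $P_w=P\cap G_w$, so $\ba_w\to M_\beta$ acquires the structure of a principal $P_w$--bundle. To see that the pullback $\omega_w=\omega_\alpha\circ Tj$ takes values in $\fg_w$, one differentiates the defining equation $\tilde s=w$ along a tangent direction of $\ba_w$; parallelism of $\tilde s$ converts this into the condition $\rho\circ\omega_\alpha(\xi)\cdot w=0$, i.e.\ $\omega_\alpha(\xi)\in\fg_w$. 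The remaining axioms of a Cartan connection — $P_w$--equivariance, reproduction of fundamental vector fields, and the absolute parallelism property — are inherited directly from the corresponding properties of $\omega_\alpha$ restricted to $\fg_w$; uniqueness is immediate from the prescription $\omega_w=\omega_\alpha\circ Tj$. This is the step where I would most carefully cross-check against \cite[Section 2.3]{hol} to confirm that smoothness of $\ba_w$ and initiality of $M_\beta$ transfer correctly into the homogeneous setting.

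For the infinitesimal--symmetry statement, fix $kup\in\ba_w$ and note that every $X\in\fk$ generates on $\ba=K\times_{\iota(H)}P$ a vector field preserving $\omega_\alpha$, whose value at $kup$ equals $\omega_\alpha^{-1}\bigl(\Ad_{p}^{-1}\alpha(X)\bigr)$. Preservation of $(\ba_w,\omega_w)$ is then equivalent to tangency to $\ba_w$, and differentiating $\tilde s=w$ shows this holds iff $\Ad_{p}^{-1}\alpha(X)\in\fg_w$; intersecting with $\alpha(\fk)$ yields the Lie algebra $\Ad_p^{-1}(\alpha(\fk))\cap\fg_w$. The converse direction is then formal: a holonomy reduction $s$ of type $\mathcal{O}_{[v]}$ is by definition parallel for the non-linear connection on $K\times_{\lambda\circ\iota(H)}\mathcal{O}_{[v]}$, and the inclusion into $\mathcal{V}$ promotes it to a parallel section of the tractor bundle; the earlier characterisation of normal solutions as parallel tractors identifies it with a normal solution $v=s(e)\in\mathbb{N}$. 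The main obstacle is not mathematical depth but the bookkeeping needed to transfer the general framework of \cite{hol} into the language of the conformal extension $\alpha$ and the homomorphism $\iota$.
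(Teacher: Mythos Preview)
Your approach is correct and coincides with the paper's, which simply invokes \cite[Section~2.3 and Theorem~2.6]{hol} for the bijection between normal solutions and holonomy reductions and for the induced Cartan geometries $(\ba_w,\omega_w)$, and then appeals to homogeneity; you have essentially unpacked those citations in the language of $\alpha$ and $\iota$. One minor imprecision: the value at $kup$ of the infinitesimal automorphism generated by $X\in\fk$ is $\omega_\alpha^{-1}\bigl(\Ad_p^{-1}\alpha(\Ad_{k^{-1}}X)\bigr)$ rather than $\omega_\alpha^{-1}\bigl(\Ad_p^{-1}\alpha(X)\bigr)$, but since $\Ad_{k^{-1}}$ permutes $\fk$ this does not affect the identification of the symmetry Lie algebra with $\Ad_p^{-1}(\alpha(\fk))\cap\fg_w$.
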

\begin{proof}
We conclude from \cite[Section 2.3]{hol} that normal solutions of first BGG operators are in one--to--one correspondence with holonomy reductions and the description $\ba=K\times_{\iota(H)}P$ then implies using \cite[Theorem 2.6]{hol} the claimed construction of Cartan geometries $(\ba_w\to M_\beta,\omega_w)$ of type $(G_w,P_w)$. Thus the remaining claims follow from homogeneity and construction of the bundles $\ba_w.$
\end{proof}

Let us summarize how we find and interpret the holonomy reductions in practice.
\noindent \\[1mm] {\bf (1)}
 We start with the normal solution $v\in \mathbb{N}\subset \mathbb{V}$ and extend it to the holonomy reduction $s: \fc \to \mathcal{O}_{[v]}$ using the exponential coordinates ${\sf c}:\fc \to M$.
\noindent \\[1mm] {\bf (2)}
We determine  $P$--types of points $X$ of $\fc$ 
and for the fixed representative $w\in \mathbb{V}$ of $P$--type, we find a representative $p_{X,w}\in P$ such that $\lambda(p_{X,w})^{-1}(s(X))=w.$
\noindent \\[1mm] {\bf (3)}
 Let $G_0$ be the maximal subgroup of $P$ with the Lie algebra $\frak{co}(p,q)$ and let $G_{+,w}$ and $G_{0,w}$ be the kernel and the image of $P_w$ for the projection $P_w\to P/\exp(\R^{n*})\cong G_0$. The coframe of $TM$  obtained by the adjoint action of $p_{X,w}^{-1}$ on the $\fc$--coframe provides an underlying $G_{0,w}$--structure on the curved orbit.
\noindent \\[1mm] {\bf (4)}
 Moreover, the set of smooth functions $\fc\to G_{+,w}\subset \exp(\R^{n*})$ corresponds to a distinguished set of Weyl connections that are connections on the $G_{0,w}$--structure. To compute this set explicitly, one starts with the Weyl connection provided by the description \eqref{Cartan_connection} of the corresponding Cartan connection from Proposition \ref{localext} and interprets the smooth function $\fc \to \R^{n*}$ as the change of the Weyl connection in the usual way, \cite[Section 1.6]{parabook}. 
\noindent \\[1mm] {\bf (5)}
 Let us emphasize that the normality of the original conformal geometry has a consequence that from the viewpoint of $G_{0,w}$--structures, the connections have special curvature.

In particular, there are the following interesting cases \cite{HS-twistor, CG-Fefferman, hol}.
\noindent \\[1mm]
{$\bullet$} Einstein scales with $v\in \mathbb{N}$ such that $\bg(v,v)\neq 0$ provide decompositions to points of three $P$--types determined by positivity, negativity or vanishing of the scalar product $\bg(v,s)$ for the value of the corresponding function $s: K\to \mathcal{O}_{[v]}$. If we denote $M^+\cup M^-\cup M^0$ the corresponding curved orbits, then on the open orbits $M^+\cup M^-$, there is the Einstein metric having Einstein constant with opposite sign than $\bg(v,v)$. The closed orbit $M^0$ is a hypersurface separating $M^+$ and $M^-$ carrying a conformal structure.
\noindent \\[2mm] {$\bullet$}
  Einstein scales with $v\in \mathbb{N}$ such that $\bg(v,v)= 0$ provide decompositions to points of five  $P$--types such that the value of $s$ is positive or negative multiple of $v$, or the value of $s$ belongs to the orthocomplement of $v$ or $\bg(v,s)$ is positive or negative for the value of $s$. If we denote $M^{0,+}\cup M^{0,-}\cup M^{0,\perp} \cup M^+\cup M^-$ the corresponding curved orbits, then on the open orbits $M^+\cup M^-$, there is a Ricci flat metric. The closed orbits $M^{0,+}\cup M^{0,-}\cup M^{0,\perp}$ consist (if $p\neq 0$) of smooth embedded hypersurface $M^{0,\perp}$ with (point) edges $M^{0,+}\cup M^{0,-}$ or (if $p=0$)  isolated points.
\noindent \\[2mm] {$\bullet$}
 In signatures $(2,3)$ and $(3,3)$, generic twistor spinors provide a curved orbit decomposition such that the open orbits carry a generic rank two, or three null--distribution on $5$-- or $6$--manifold, respectively.
\noindent \\[2mm] {$\bullet$}
 Normal conformal Killing fields $v\in \mathbb{N}$ such that $v$ is a non--degenerate tractor 2--form (i.e., $n$ is even)  provide a curved orbit decomposition such that the open orbits are locally Fefferman spaces of almost CR manifolds.
\noindent \\[2mm]
Since the conformal class of the G\"odel metrics does not admit any normal solutions, we need to consider different conformal geometries to provide non--trivial examples of holonomy reductions.

\subsection{Holonomy reductions for submaximal pp--wave} \label{pp-hol}
Let us consider the conformal class of the submaximally symmetric pp--wave of signature $(1,3)$, \cite{submax,kundt},
\[
g=x^2dt^2+2dtdz+dx^2+dy^2
\]
on $M=(t,x,y,z)=\R^4$.
Among the seven conformal Killing fields, we pick the following $4$--dimensional solvable Lie algebra $\fk$ generated by
\[
k_1:=\partial_t,\ \ k_2:=e^{-t}(\partial_x+x\partial_z),\ \ k_3:=\partial_y,\ \ k_4:=\partial_z.
\]
Since this is an orthonormal frame of $g$ at the origin $o=(0,0,0,0)$, we can directly compute the associated conformal extension.

\begin{lem}
Suppose $(x_1,x_2,x_3,x_4)$ is the parametrization of $\fk$ via the frame $k_1,k_2,k_3,k_4$. Then $$\alpha(x_1,x_2,x_3,x_4)= \left[ \begin {smallmatrix} 0&{ \frac{1}{2}}{ x_1}&0&0&0&0
\\ { x_1}&0&0&0&0&0\\ { x_2}&-
{ x_2}&0&0&0&0\\ { x_3}&0&0&0&0&0
\\ { x_4}&0&{ x_2}&0&0&-{ \frac{1}{2}}{ x_1}
\\ 0&-{ x_4}&-{ x_2}&-{ x_3}&-{ x_1}&0
\end {smallmatrix} \right]
$$
is the normal conformal extension associated with the conformal class of the pp--wave $g$ with curvature $\kappa(\alpha(x_1,x_2,x_3,x_4),\alpha(y_1,y_2,y_3,y_4))$ of the form
$$ \left[ \begin {smallmatrix} 0&0&0&0&0&0\\ 0&0&0&0
&0&0\\ 0&{ \frac{1}{2}}({ x_1}{ y_2}-{ x_2}{
 y_1})&0&0&0&0\\ 0&{ \frac{1}{2}}({ x_3}{ y_1}-{
 x_1}{ y_3})&0&0&0&0\\ 0&0&{ \frac{1}{2}}({ x_2}{ 
y_1}-{ x_1}{ y_2})&{ \frac{1}{2}}({ x_1}{ y_3}-{ x_3}{
 y_1})&0&0\\ 0&0&0&0&0&0\end {smallmatrix} \right]
$$
\end{lem}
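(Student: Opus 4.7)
The plan is to follow the construction in Proposition \ref{conf_ext}, exploiting that here the stabilizer subalgebra is trivial: the four vector fields $k_1,\dots,k_4$ are linearly independent everywhere on $M=\R^4$, so $\fh=0$ and we may take $\fc=\fk$. Since $k_1,\dots,k_4$ is an orthonormal frame for $g$ at the origin, the map $\alpha_{-1}\colon\fk\to\R^4$ sending $x_1k_1+\cdots+x_4k_4$ to $(x_1,x_2,x_3,x_4)^t$ satisfies $\alpha_{-1}^*\nu_{1,3}=g_o$, giving the $(u,v,w)$--blocks of the matrix to be verified. Because $\fh=0$, conditions (1) and (2) of Proposition \ref{conf_ext} reduce to the statement that $\alpha_{-1}$ has the prescribed form, so only the normalization \eqref{norm} and the curvature identity remain to be checked.

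The first concrete step is to compute the bracket structure of $\fk$. Using the convention that the bracket on $\fk$ is minus the vector field commutator, a direct calculation gives
\begin{align*}
[k_1,k_2]=k_2,\qquad [k_i,k_j]=0\ \text{otherwise.}
\end{align*}
In particular, for $X=\sum x_ik_i$ and $Y=\sum y_ik_i$ one has $\alpha([X,Y])=(x_1y_2-x_2y_1)\,\alpha(k_2)$, so that only a single pattern has to be tracked through the curvature formula.

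Next I would determine the $\frak{co}(1,3)$--component $\alpha_0$ and the $\R^{4*}$--component $\alpha_1$ by successively imposing the $\frak{co}(1,3)$-- and $\R^{4*}$--parts of \eqref{norm}. Since $\fh=0$, the equivariance constraints in the proof of Proposition \ref{conf_ext} are vacuous, so only \eqref{norm} restricts $\alpha_0$; after fixing the $a$--part of $\alpha_0$ to zero (the allowed gauge freedom noted in the proof of Proposition \ref{conf_ext}), the resulting linear system forces $C=(-x_2,0)^t$ with all other $\frak{co}(1,3)$--blocks vanishing. The analogous procedure on $\R^{4*}$ then uniquely yields $b=\tfrac12 x_1$ and $c=0,\ d=0$, which matches the stated matrix.

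Finally I would substitute the resulting $\alpha$ into $\kappa(\alpha(X),\alpha(Y))=\{\alpha(X),\alpha(Y)\}-\alpha([X,Y])$ and read off the entries of the claimed curvature matrix. The genuine obstacle is purely organizational: one must carefully expand the matrix commutator of two generic elements of $\sodc$ in block form and project onto the blocks of \eqref{blockso}. Since $\alpha$ has very few nonzero entries and only the single bracket $[k_1,k_2]$ contributes to $\alpha([X,Y])$, however, each entry of $\kappa$ is a short explicit evaluation rather than a conceptual difficulty.
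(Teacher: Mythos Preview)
Your approach is correct and essentially the same as the paper's: both exploit that $\fh=0$ so $\fk=\fc$, reduce the construction of Proposition \ref{conf_ext} to solving the normalization conditions \eqref{norm} for $\alpha_0$ and $\alpha_1$ with the $a$--part fixed to zero, and then compute $\kappa$ directly from its definition. Your explicit identification of the single nonzero bracket $[k_1,k_2]=k_2$ and the resulting $C=(-x_2,0)^t$, $b=\tfrac12 x_1$ simply spells out what the paper's one--line proof leaves implicit.
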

\begin{proof}
Since $\fk=\fc$, the construction from Proposition \ref{conf_ext} simplifies to finding the maps $\alpha_0,\alpha_1$, that are uniquely determined in the given form by the normalization conditions \eqref{norm} and vanishing of the $a$--part in \eqref{blockso} of $\alpha_0$.
\end{proof}

Let us show that this conformal geometry admits normal solutions that we can use for finding  holonomy reductions.

\begin{prop} \label{pp-sol}
Suppose $\rho=w_1\lambda_1+w_2\lambda_2+w_3\lambda_3$  for the  fundamental weights $\lambda_i$ of the complexification of $\sodc$. 
\begin{itemize}
\item If $w_2=w_3$, then $\Phi|_{\mathbb{N}}$ is obtained by branching the representation $w_1\lambda_1$ of $\frak{sl}(2,\R)$ to $\left[ \begin {smallmatrix} 0& -\frac12 x_1\\ -x_1& 0 \end {smallmatrix} \right].$ 
\item If $w_2\neq w_3$, then $\Phi|_{\mathbb{N}}$ is obtained by branching the complexification of the representation $w_1\lambda_1$ of $\frak{sl}(2,\R)$ to $\left[ \begin {smallmatrix} 0& -\frac12 x_1\\- x_1& 0 \end {smallmatrix} \right].$ 
\item In both cases, the projection $\pi: \mathbb{N}\to \mathbb{X}$ is induced by the identification of $\mathbb{N}$ with $\frak{sl}(2,\R)\oplus \frak{so}(2)$--orbit of the lowest weight vector, where $\frak{sl}(2,\R)\oplus \frak{so}(2)\subset \sodc$ corresponds to the diagonal in the decomposition of $\sodc$ into $2\times 2$--blocks.
\end{itemize}
In particular, there is 
\begin{enumerate}
\item[(Es) ]a $2$--parameter family of Einstein scales with $$\Phi(x_1,x_2,x_3,x_4)=\left[ \begin {smallmatrix} 0& -\frac12 x_1\\- x_1& 0 \end {smallmatrix} \right]\subset \frak{gl}(2,\R)$$ and $\pi((v_1,v_2)^t)=v_2,$ 
\item[(ts)] a $2$--parameter family of twistor spinors corresponding to constant function $(0,v_1+iv_2)^t$, and
\item[(cKf)] a $1$--parameter family of  normal conformal Killing fields corresponding to constant functions valued in $\sodc$ decomposed as \eqref{blockso}  with $w=v_1$ and 
remaining elements vanish.
\end{enumerate}
\end{prop}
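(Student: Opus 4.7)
My plan is to apply the algorithm of Section \ref{section3.3} to the conformal extension $\alpha$ and curvature $\kappa$ just computed. The crucial intermediate object is the infinitesimal holonomy $hol(\alpha) \subset \sodc$: the curvature image lies entirely in the $C$--block of \eqref{blockso}, which is a $2$--dimensional abelian subspace, and I then close it under iterated brackets $\{\alpha(k_i),\cdot\}$. I expect the result to be a nilpotent subalgebra whose complement in $\sodc$ is precisely the $\frak{sl}(2,\R)\oplus\frak{so}(2)$ sitting as the block--diagonal of the $2\times 2$--block decomposition of the $6\times 6$ standard representation.

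For an irreducible $\rho=w_1\lambda_1+w_2\lambda_2+w_3\lambda_3$, the space $\mathbb{N}$ of normal solutions is then the joint annihilator of $\rho(hol(\alpha))$. Since the complementary $\frak{sl}(2,\R)\oplus\frak{so}(2)$ normalizes $hol(\alpha)$, $\mathbb{N}$ inherits a representation obtained by branching $\rho$: the $\frak{so}(2)$ factor acts trivially when $w_2=w_3$ and by complex multiplication otherwise, while the $\frak{sl}(2,\R)$--factor carries the $w_1\lambda_1$ representation. By definition of normality $\Phi|_{\mathbb{N}}=\rho\circ\alpha|_{\mathbb{N}}$, and reading off the upper--left $2\times 2$ corner of $\alpha(x_1,x_2,x_3,x_4)$ modulo $hol(\alpha)$ yields the $\frak{sl}(2,\R)$--matrix $\left[\begin{smallmatrix} 0 & -x_1/2 \\ -x_1 & 0 \end{smallmatrix}\right]$ (sign convention depending on which of the three diagonal $2\times 2$ blocks is taken as the standard realization). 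The projection $\pi:\mathbb{N}\to\mathbb{X}$ is the restriction to $\mathbb{N}$ of the projection onto the lowest weight $\frak{co}(p,q)$--component of $\mathbb{V}$, and meets $\mathbb{N}$ in the $\frak{sl}(2,\R)\oplus\frak{so}(2)$--orbit of the lowest weight vector, as claimed.

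The three listed families then follow by specialization. For (Es), $\rho=\lambda_1$ acts on $\mathbb{T}$ and $\mathbb{N}$ is the $2$--dimensional subspace annihilated by the $C$--block (acting by matrix multiplication); the grading identifies $\pi$ with $(v_1,v_2)\mapsto v_2$ and reproduces the Einstein scales with the stated matrix $\Phi$. For (ts), $\rho$ is a half--spin representation with $w_2\neq w_3$, so $\mathbb{N}$ is complex $2$--dimensional and yields the constant function $(0,v_1+iv_2)^t$. For (cKf), $\rho=\wedge^2\lambda_1\cong\ad$ and $\mathbb{N}$ turns out to be $1$--dimensional, producing the constant function valued in $\sodc$ with $w=v_1$ and all other blocks vanishing.

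The hard part will be the explicit computation of $hol(\alpha)$: checking that iterated brackets $\{\alpha(k_i),\kappa(\alpha(k_j),\alpha(k_l))\}$ and their further iterations remain confined to a nilpotent subalgebra complementary to the diagonal $\frak{sl}(2,\R)\oplus\frak{so}(2)$. Once this purely Lie--algebraic step is carried out, the remainder of the proof is a routine application of the branching rules together with the tracking of the grading to identify $\mathbb{X}$.
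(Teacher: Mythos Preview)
Your approach is essentially that of the paper: compute $hol(\alpha)$ from the curvature by iterated bracketing with $\alpha(\fk)$, identify it as a nilpotent subalgebra, take $\mathbb{N}$ as its annihilator in $\mathbb{V}$, and read off the residual $\frak{sl}(2,\R)\oplus\frak{so}(2)$--action. The paper makes this precise by observing that $hol(\alpha)$ is the $5$--dimensional Heisenberg algebra $\fg_{-2}\oplus\fg_{-1}$ of the \emph{contact} grading of $\sodc$, so that $\mathbb{N}$ is exactly the lowest eigenspace of the corresponding grading element $E_2$, an irreducible $\frak{sl}(2,\R)\oplus\frak{so}(2)$--module of the stated type.

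Two inaccuracies to fix. First, $\frak{sl}(2,\R)\oplus\frak{so}(2)$ is \emph{not} a complement of $hol(\alpha)$ in $\sodc$ (the dimensions are $4+5\neq 15$); it is the semisimple part of the Levi factor $\fg_0$ of the contact parabolic, which normalizes $hol(\alpha)$ and hence acts on $\mathbb{N}$. Second, the relevant $2\times 2$ block of $\alpha$ is the \emph{last} diagonal block (where the lowest $E_2$--weight lives), not the upper--left one; this is why the sign is $\left[\begin{smallmatrix}0&-x_1/2\\-x_1&0\end{smallmatrix}\right]$. Also, for the half--spin representation $w_1=0$, so $\mathbb{N}$ is the complexification of the trivial $\frak{sl}(2,\R)$--module, hence real $2$--dimensional (complex $1$--dimensional), not complex $2$--dimensional.
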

\begin{proof}
It is a simple observation that the image of $\kappa$ is $2$--dimensional and the bracket of the image of $\alpha$ with it is $3$--dimensional subspace of $\R^4$. Doing further bracketing, we get nothing new and 
\[
hol(\alpha)= \left[ \begin {smallmatrix} 0&0&0&0&0&0\\ 0&0&0&0
&0&0\\ { h_3}&{ h_1}&0&0&0&0
\\ { h_4}&{ h_2}&0&0&0&0\\ {
 h_5}&0&-{ h_1}&-{ h_2}&0&0\\ 0&-{ h_5}&-{
 h_3}&-{ h_4}&0&0\end {smallmatrix} \right].
\]
Thus $hol(\alpha)$ is a Heisenberg Lie algebra corresponding to the negative part of the contact grading of $\sodc$. If we denote by $E_2$ the corresponding grading element, the normal solutions belong to the eigenspace of $E_2$ of the lowest weight vector that is an irreducible $\frak{sl}(2,\R)\oplus \frak{so}(2)$--module. For $w_2=w_3$, it is $\frak{sl}(2,\R)$--module with the highest weight $w_1\lambda_1$ and trivial $\frak{so}(2)$--module. For $w_2\neq w_3,$ it is the complexification of $\frak{sl}(2,\R)$--module with the highest weight $w_1\lambda_1$ and $\frak{so}(2)$ acts as the imaginary part of $\mathbb{C}$. The branching is then just the restriction to the last block in the diagonal in the decomposition of the image of $\alpha$ into $2\times 2$--blocks. For the particular weights $\lambda_1$ of the standard representation, $\lambda_2$ of the spinor representation and $\lambda_2+\lambda_3$ of the adjoint representation, we obtain the claimed normal solutions.
\end{proof}

As in the case of the G\"odel metric, we present all the results in the original coordinates rather than the exponential coordinates ${\sf c}: \fc\to M$. So let us compute the $\fc$--coframe in the original coordinates.

\begin{lem}
There is the following $\fc$--(co)frame on $M$
\begin{gather*}
e^1=dt,\ \  e^2=xdt+dx,\ \ e^3=dy,\ \ e ^4=-xdx+dz,\\
 e_1=\partial_t-x\partial_x-x^2\partial_z, \ \ e_2=\partial_x+x\partial_z,\ \ e_3=\partial_y,\ \ e_4=\partial_z.
\end{gather*}
In particular, the Einstein scales take form  \begin{gather*}
(\sinh({ \frac{\sqrt{2}}{2}}t)\sqrt{2}v_1+\cosh({ \frac{\sqrt{2}}{2}}t)v_2) e^{\frac{t}4} (dt \wedge dx \wedge dy \wedge dz)^{\frac{-1}{4}}
\end{gather*}
 and the normal conformal Killing vectors take form $v_1\partial_{z}$.
\end{lem}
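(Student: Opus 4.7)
The plan is to use the general machinery from Sections \ref{loc_coord} and \ref{section3.4} applied to the solvable Lie algebra $\fk$ from Section \ref{pp-hol}. Since $\fk$ is solvable with only the nontrivial bracket $[k_1,k_2]_\fk=k_2$, I take $\fl=0$, $\fa=\langle k_1\rangle$, $\fn=\langle k_2,k_3,k_4\rangle$; because $K$ acts simply transitively on $M$ I set $\fc=\fk$. Composing the flows of $k_1,k_2,k_3,k_4$ in the prescribed order (using that $k_2,k_3,k_4$ commute and that $k_2=e^{-t}(\partial_x+x\partial_z)$ preserves $t$) yields the transition $(t,x,y,z)=(a_1,a_2,a_3,a_4+a_2^2/2)$ between the exponential and original coordinates.

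The $\fc$--(co)frame then follows from Proposition \ref{c-coframe} by computing the Maurer--Cartan pullback, which is the main technical step. Since $\fk$ has only one nontrivial bracket, the adjoint action collapses to a single linear term:
\[
\tilde{\sf c}(a)^{-1}\partial_{a_1}\tilde{\sf c}(a)=\Ad_{\exp(-a_2 k_2-a_3 k_3-a_4 k_4)}(k_1)=k_1+a_2 k_2,
\]
using $[k_2,k_1]=-k_2$ and $\ad_{k_2}^2(k_1)=0$; the remaining derivatives $\tilde{\sf c}(a)^{-1}\partial_{a_i}\tilde{\sf c}(a)=k_i$ for $i=2,3,4$ are immediate since $\fn$ is abelian. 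As $\alpha_{-1}$ is the identity in the chosen basis, this produces $e^1=da_1$, $e^2=a_2\,da_1+da_2$, $e^3=da_3$, $e^4=da_4$; translating through $a\mapsto(t,x,y,z)$ gives the claimed coframe, and the frame follows by dualization (equivalently, by pushing $k_i(o)$ forward along $\tilde{\sf c}(a)$).

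For the Einstein scales I apply formula \eqref{parsec} with $\mathbb{V}=\mathbb{T}$ and $\Phi|_\mathbb{N}$ from Proposition \ref{pp-sol}(Es). A direct check using the explicit form of $\alpha$ shows that $\rho\circ\alpha(k_i)$ annihilates $\mathbb{N}\subset\mathbb{T}$ for $i=2,3,4$, so $\exp(-\Phi(X_\fn))$ is trivial and only $\exp(-a_1\Phi(k_1))$ contributes. The matrix $-\Phi(k_1)=\left(\begin{smallmatrix}0&1/2\\1&0\end{smallmatrix}\right)$ squares to $\tfrac{1}{2}I$, so its exponential is $\cosh(a_1/\sqrt 2)\,I+\sqrt 2\sinh(a_1/\sqrt 2)\left(\begin{smallmatrix}0&1/2\\1&0\end{smallmatrix}\right)$; acting on $(v_1,v_2)^\top$ and projecting to the second component via $\pi$ yields the scalar factor $\sqrt 2\sinh(t/\sqrt 2)v_1+\cosh(t/\sqrt 2)v_2$, which paired with the density trivialization of $\mathcal{E}[1]$ (expressing $\epsilon$ in the original coordinates) recovers the stated form. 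For the normal conformal Killing vectors, Proposition \ref{pp-sol}(cKf) identifies $\mathbb{N}$ with the line spanned by the tractor $W$ with only $w=v_1$ nonzero; a direct check shows $[\alpha(k_i),W]=0$ for all $i$, so $\Phi|_\mathbb{N}=0$ and the section is the constant $v_1 W$, whose projection to $\mathbb{X}=\R^4$ picks out the fourth basis vector $e_4=\partial_z$, giving $v_1\partial_z$.
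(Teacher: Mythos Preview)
Your proof is correct and follows essentially the same route as the paper: compute the transition $(a_1,a_2,a_3,a_4)\mapsto(a_1,a_2,a_3,\tfrac12 a_2^2+a_4)$ via flows, pull back the Maurer--Cartan form to obtain the $\fc$--coframe, and then translate the normal solutions from Proposition~\ref{pp-sol} through these coordinates via~\eqref{parsec}. The only stylistic difference is that the paper computes $\tilde{\sf c}^*\omega_K$ by writing down an explicit faithful matrix realisation of $\fk$ and reading off the Maurer--Cartan form, whereas you compute it intrinsically via $\Ad_{\exp(-X_\fn)}(k_1)=k_1+a_2k_2$ using the single nontrivial bracket; your way is arguably cleaner here since the bracket structure is so sparse, while the paper's matrix approach scales better to more complicated $\fk$. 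You are also more explicit than the paper about the exponentiation of $-\Phi(k_1)$ and the verification that $\rho\circ\alpha(k_i)$ kills $\mathbb{N}$ for $i\geq 2$, which the paper leaves implicit in the phrase ``translate the results computed according to Theorem~\ref{thm-coord}''.
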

\begin{proof}
Since the composition of exponential maps corresponds to composition of flows of the conformal Killing fields, we compute  ${\sf c}: \fc\to M$, $(a_1,a_2,a_3,a_4)\mapsto (a_1,a_2,a_3,\frac12 a_2^2+a_4)$. If we consider the matrix representation 
$$ \left[ \begin {smallmatrix} { \frac{1}{2}}{ a_1}&{ a_2}&0&0
\\ 0&-{ \frac{1}{2}}{ a_1}&0&0\\ 0&0&{
 a_3}&0\\ 0&0&0&{ a_4}\end {smallmatrix} \right] 
$$
of $\fk$, then the Maurer--Cartan form in the exponential coordinates takes form
$$ \left[ \begin {smallmatrix} { \frac{1}{2}}{ da_1}&{ a_2da_1+da_2}&0&0
\\ 0&-{ \frac{1}{2}}{ da_1}&0&0\\ 0&0&{
 da_3}&0\\ 0&0&0&{ da_4}\end {smallmatrix} \right].
$$
Then we can push-pull the corresponding $\fc$--(co)frame to $M$ and obtain the claim of the lemma.

Using the explicit formula for the coordinates, we can translate the results computed according to Theorem \ref{thm-coord} in the exponential coordinates to the original coordinates and obtain the claimed form for the normal solutions.
\end{proof}

Let us discuss the holonomy reductions induced by Einstein scales, twistor spinors, and normal conformal Killing fields from Proposition \ref{pp-sol}. Let us start with the Einstein scales.

\begin{prop}
All the Einstein scales have the $G$--type corresponding to a null--vector. 
\begin{itemize}
\item If $\frac{\sqrt{2}v_1-v_2}{\sqrt{2}v_1+v_2}>0$, then there are curved orbits $M^{0,\perp}\cup M^+\cup M^-$ determined by 
$t=\frac1{\sqrt{2}}ln(\frac{\sqrt{2}v_1-v_2}{\sqrt{2}v_1+v_2})$, $t>\frac1{\sqrt{2}}ln(\frac{\sqrt{2}v_1-v_2}{\sqrt{2}v_1+v_2}) $ and $t< \frac1{\sqrt{2}}ln(\frac{\sqrt{2}v_1-v_2}{\sqrt{2}v_1+v_2})$, respectively. 
\item If $\frac{\sqrt{2}v_1-v_2}{\sqrt{2}v_1+v_2}\leq 0$ then all of the points of $M$ have the $P$--type corresponding to open orbit. 
\end{itemize}
The open orbits carry a Ricci flat metric \begin{gather*}
\Big({ \frac{\sqrt{2}}{2}}(e^{-{ \frac{\sqrt{2}}{2}}t}-e^{{ \frac{\sqrt{2}}{2}}t})v_1+{ \frac{1}{2}}(e^{-{ \frac{\sqrt{2}}{2}}t}+e^{{ \frac{\sqrt{2}}{2}}t})v_2\Big)^{-2}g
\end{gather*} 
in the conformal class and the closed orbit carries a Cartan geometry of type $(SO(1,3)\rtimes \R^4,P_1\rtimes \R^3),$ where $\R^4$ decomposes as the standard tractor bundle for $G=SO(1,3)$  into a null--line preserved by $P_1$ and its orthocomplement $\R^3$. The underlying geometric structure on the closed orbit consists of
\begin{enumerate}
\item a $1$--dimensional distribution $v_1\partial_z$ with projection $$(\frac1{\sqrt{2}}ln(\frac{\sqrt{2}v_1-v_2}{\sqrt{2}v_1+v_2}),x,y,z)\mapsto (x,y)$$ on the leaf space, and
\item the flat conformal class $[dx^2+dy^2]$ on the leaf space.
\end{enumerate}
\end{prop}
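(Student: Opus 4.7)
The overall plan is to unpack Proposition \ref{hol-red} for the Einstein-scale normal solution from Proposition \ref{pp-sol}(Es). Writing $v=(v_1,v_2)^t\in \mathbb{N}\subset \mathbb{T}=\R^6$ for the parallel tractor and $s(X)=\exp(-\Phi(X))v$ for the associated holonomy-reduction function on $\fc$, the proof rests on two explicit computations: the scalar product $\bg(v,v)$, which fixes the $G$-type, and the $t$-dependence of the projective slot $\pi(s(X))$, which detects the curved-orbit decomposition.

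The first step is to verify that $\bg$ vanishes identically on $\mathbb{N}$. The branching described in Proposition \ref{pp-sol} embeds $\mathbb{N}$ along two null directions of $\mathbb{T}$ in the block form \eqref{blockso}, and a direct linear-algebra check shows that $\mathbb{N}$ is totally null; consequently $\bg(v,v)=0$ for every $v \in \mathbb{N}$, $G_w=SO(1,3)\rtimes\R^4$, and we are in the bulleted case of an Einstein scale with $\bg(v,v)=0$ listed just above this proposition. Because $\Phi$ depends only on $x_1=t$, writing out $\exp(-\Phi(t,0,0,0))v$ reproduces exactly the scale $\sigma(t)$ already exhibited in the preceding lemma; solving $\sigma(t)=0$ produces a real $t$-value precisely when $(\sqrt{2}v_1-v_2)/(\sqrt{2}v_1+v_2)>0$, which gives the case split in the statement. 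The Ricci-flat metric $\sigma^{-2}g$ on the open orbits is then the standard consequence of $v$ being a null parallel tractor: its Einstein constant is proportional to $-\bg(v,v)=0$, and substituting the explicit $\sigma$ yields the formula in the statement.

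For the closed orbit $M^{0,\perp}$, Proposition \ref{hol-red} produces the reduced Cartan geometry of type $(G_w,P_w)$, and the main technical hurdle is identifying $P_w$ together with the associated underlying $G_{0,w}$-structure. A representative $w$ of the closed orbit's $P$-type must be simultaneously null and orthogonal to the distinguished null line fixed by $P$ (because the projective slot vanishes there). Using the explicit Levi decomposition of $P$ into the $\frak{co}(p,q)$-part and the abelian nilpotent $\R^{n*}$-part, and testing which elements fix such a $w$, one isolates $P_w=P_1\rtimes \R^3$ with $P_1\subset SO(1,3)$ the stabilizer of a null line. Finally, to read off the underlying geometric structure I would use the trivialization from the $\fc$-coframe: the $P_w$-reduction distinguishes a null $1$-dimensional distribution on $M^{0,\perp}$, and pulling back via the coframe identifies it with $\partial_z$, which is also the direction of the normal conformal Killing field $v_1\partial_z$ from Proposition \ref{pp-sol}(cKf). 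The induced conformal class on the $2$-dimensional leaf space comes from restricting $g$ to the transverse coordinates $(x,y)$ on $M^{0,\perp}$, giving the visibly flat class $[dx^2+dy^2]$.
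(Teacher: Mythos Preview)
Your proposal is correct and follows essentially the same route as the paper's proof: both compute the parallel tractor $s(X)=\exp(-\Phi(X))v$ (the paper writes it out as the explicit vector $(0,0,0,0,\cosh(\tfrac{\sqrt{2}}{2}t)v_1+\tfrac{\sqrt{2}}{2}\sinh(\tfrac{\sqrt{2}}{2}t)v_2,\sqrt{2}\sinh(\tfrac{\sqrt{2}}{2}t)v_1+\cosh(\tfrac{\sqrt{2}}{2}t)v_2)^t$), read off the null $G$--type and the vanishing locus of the projective slot, and then identify $G_w$, $P_w$ and the underlying structure via the $\fc$--coframe (with $e_4$ giving the distribution and $e_2,e_3$ the conformal class). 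The only minor point you leave implicit is ruling out the $P$--types $M^{0,\pm}$: since $\mathbb{N}$ sits in the last two slots of $\mathbb{T}$ and is $\Phi$--invariant, the tractor can never be a multiple of the $P$--fixed first basis vector, which the paper makes explicit by displaying the constant value $(0,0,0,0,\tfrac{\sqrt{2}}{2}\sqrt{2v_1^2-v_2^2},0)^t$ on the zero locus.
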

\begin{proof}
We can deduce from the data in Proposition \ref{pp-sol} that the holonomy reduction corresponding to the Einstein scale $(v_1,v_2)^t\in \mathbb{N}$ is 
\begin{gather*}(t,x,y,z)\mapsto \big(0,0,0,0,
\\
\cosh({ \frac{\sqrt{2}}{2}}t)v_1+{ \frac{\sqrt{2}}{2}}\sinh({ \frac{\sqrt{2}}{2}}t)v_2 ,\sqrt{2}\sinh({ \frac{\sqrt{2}}{2}}t)v_1+\cosh({ \frac{\sqrt{2}}{2}}t)v_2\big)^t
\end{gather*}
 which are all null--vectors. The projective slot vanishes for the claimed $t$ and along the zero locus, we get constant function with value 
$(0,0,0,0,{ \frac{\sqrt{2}}{2}}\sqrt{2v_1^2-v_2^2} ,0)^t$ and thus the closed orbit is of type $M^{0,\perp}$. On the open orbits, we get the claimed Ricci flat metrics. On the closed orbit, it is easy to observe that the annihilator of 
$(0,0,0,0,{ \frac{\sqrt{2}}{2}}\sqrt{2v_1^2-v_2^2} ,0)^t$ is isomorphic to $SO(1,3)\rtimes \R^4$ and that $SO(1,3)\rtimes \R^4\cap P=P_1\rtimes \R^3.$ We consider $w=
(0,0,0,0,{ \frac{\sqrt{2}}{2}}\sqrt{2v_1^2-v_2^2} ,0)^t$ for the geometric interpretation and thus we can use the $\fc$--(co)frame to deduce the underlying geometric structure, where the distribution is given by $e_4$ and the conformal class by the conformal basis $e_2,e_3$.
\end{proof}

Since the normal conformal Killing field can be obtained as a tensor product of two twistor spinors in 
our case, we discuss them together. For the twistor spinors $v\in \mathbb{N}$, we know that $G$--types correspond to the length of $v$ w.r.t. the Hermitian metric on the spin tractor bundle preserved by $SU(2,2)$. In the case of normal conformal Killing fields, the $G$--types coincide with the classification of adjoint orbits in $\so$,  \cite[Table III, 13--16]{Djokovic}.

\begin{prop}\label{pp-spin}
The normal conformal Killing field $u$ from Proposition \ref{pp-sol} has the 
$G$--type corresponding to translations and the Lie algebra $\fg_u$ of $G_u$ consisting of elements 
\begin{align*} 
 \left[ \begin {smallmatrix}h_8&h_6&0&0&0 &0
\\ h_7&-h_8&0 & 0 &0&0
\\ h_3&h_1&0&h_9&0&0
\\ h_4&h_2&-h_9&0&0&0
\\ h_5&0&-h_1&-h_2&h_8&-h_6
\\ 0&-h_5&-h_3&-h_4&-h_7&-h_8
\end {smallmatrix} \right].
\end{align*}
The twistor spinor $v$ from Proposition \ref{pp-sol} has the null $G$--type 
 and the Lie algebra $\fg_v$ of $G_v$ is subalgebra of $\fg_u$ for $h_9=0$.

All points of $M$ have the same $P$--types and there are homogeneous Cartan geometries of type $(G_v,P_v)$ and $(G_u,P_u)$ on $M$, where $P_v$ has the Lie algebra generated by $h_1,h_2,h_6,h_8$--parts of $\fg_v$ and $P_u$ has the Lie algebra generated by $h_1,h_2,h_6,h_8,h_9$--parts of $\fg_u$. In particular, the $\fc$--(co)frame and the restriction of the map $\alpha$ to $\fg_v$ and $\fg_u$ provides the underlying geometric structure consisting of
\begin{enumerate}
\item a distinguished vector field $e_4=\partial_z$, that is a reduction of $CO(1,3)$ to $P_u\cap CO(1,3)=(P_v\cap CO(1,3))\rtimes SO(2)$,
\item an orthogonal decomposition of $e_4^\perp/\langle e_4\rangle=\langle e_2\rangle\oplus \langle e_3 \rangle$ into two distributions of rank $1$, that is a reduction of $(P_v\cap CO(1,3))\rtimes SO(2)$ to $(P_v\cap CO(1,3))$, and
\item a subclass of the class of Weyl connections that preserve $e_4$ and the two distributions of rank $1$ induced by the Levi--Civita connection of $g$, and one--forms proportional to $e^1$ via the usual formula for the change of Weyl connection, \cite[Section 1.6]{parabook}.
\end{enumerate}
\end{prop}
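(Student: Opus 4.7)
The proof combines direct linear-algebra computation of the stabilizers with the general construction of reduced Cartan geometries from Proposition \ref{hol-red}. First I would compute $\fg_u$ by solving $\{X,u\}=0$ for $X\in\sodc$, where $u$ is the element of \eqref{blockso} with $w=v_1$ and all other parts zero. The resulting 9-parameter family is the centralizer of $u$; since $u$ acts nilpotently in the defining representation $\mathbb{T}$, this identifies the $G$-type with the translation-type adjoint orbit from \cite[Table III, 13--16]{Djokovic}. The spinor stabilizer $\fg_v$ is computed analogously from $\rho(X)v=0$ using the explicit spin representation given at the end of Section \ref{sol-godel}; the null $G$-type follows because the $SU(2,2)$-invariant Hermitian form vanishes on the constant spinor $(0,v_1+iv_2)^t$. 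The inclusion $\fg_v\subset\fg_u$ with $h_9=0$ is a direct check; conceptually it reflects the BGG coupling discussed after Proposition \ref{hol-red}, which constructs normal conformal Killing fields from twistor spinors and guarantees that the stabilizer of the spinor lies in the stabilizer of the resulting Killing field, while the extra generator $h_9\in\frak{so}(2)$ is the circle in the spinor representation that fixes the underlying Killing field.

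Second, I would verify that $M$ carries a single $P$-type for each reduction by checking $\fp+\fg_u=\fp+\fg_v=\sodc$. This is a dimension count: among the nine generators of $\fg_u$, the four parameters $h_3,h_4,h_5,h_7$ lie in the $(u,v,w)$-part of \eqref{blockso}, which is a complement of $\fp$, so $(\fg_u+\fp)/\fp$ has the full dimension $n=4$. Because $K$ acts transitively and the $\alpha$-image of $\fk$ lies in $\fp+\fg_u$, the $K$-orbit of the section value at $o$ stays inside a single $P$-orbit of $\mathcal{O}_{[u]}$, so the $P$-type is constant on $M$; the same argument works for $\fg_v$.

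Third, Proposition \ref{hol-red} now supplies the Cartan geometries $(\ba_v,\omega_v)$ and $(\ba_u,\omega_u)$ on all of $M$, with structure-group Lie algebras $\fp_v=\fg_v\cap\fp$ and $\fp_u=\fg_u\cap\fp$ generated by the stated $h_i$-parameters. The underlying geometric structure is read off from the $\fc$-(co)frame of Section \ref{pp-hol}: the single $w$-slot $h_5$ of $\fg_u$ singles out the direction $e_4=\partial_z$; the extra $h_9$-generator of $P_u\cap CO(1,3)$ over $P_v\cap CO(1,3)$ is precisely the $\frak{so}(2)$ rotating $e_2,e_3$, so its removal in passing to $P_v$ refines the structure by the orthogonal splitting $\langle e_2\rangle\oplus\langle e_3\rangle$. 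Finally, the admissible Weyl connections correspond to functions $\fc\to P_v^+:=P_v\cap\exp(\R^{n*})$; since $\fg_v\cap\R^{n*}$ is the one-dimensional $h_6$-line, which pairs under the Killing form with the $u$-direction of \eqref{blockso} spanning $e_1$, the freedom reduces to one-forms proportional to $e^1$, and applying the change-of-Weyl formula from \cite[Section 1.6]{parabook} to the Levi--Civita connection of $g$ produces the claimed subclass.

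The most delicate step will be the last one: making explicit the identification of the $P_v$-compatible subclass of Weyl connections. This requires combining the restriction of $\alpha$ to $\fg_v\cap\fp$ with the change-of-Weyl formula and checking that the only remaining freedom is the $h_6 e^1$-direction, which together with the normality of the original Cartan connection pins down the subclass uniquely.
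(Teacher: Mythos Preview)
Your approach is essentially the same as the paper's, but your argument for the single $P$-type has a gap. You claim that $\fp+\fg_u=\sodc$ together with $\alpha(\fk)\subset\fp+\fg_u$ forces the section to stay in one $P$-orbit; but the second inclusion is vacuous once you have established the first, and $\fp+\fg_u=\sodc$ only says the $P$-orbit through $u$ is \emph{open} in $\mathcal{O}_{[u]}$, not that it is the only orbit the section can meet. The paper's argument is simpler and is already implicit in your data: by Proposition~\ref{pp-sol} the parallel sections corresponding to both $u$ and $v$ are \emph{constant} functions on $M$ (equivalently, $\alpha(\fk)\subset\fg_v\subset\fg_u$, which you can read off directly by comparing the matrix form of $\alpha$ with that of $\fg_u$). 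A constant section trivially has a single $P$-type, and one may take $w=u$ (resp.\ $w=v$) and $p=\id$ in Proposition~\ref{hol-red}; then $\Ad_p^{-1}(\alpha(\fk))\cap\fg_w=\alpha(\fk)$, so the full group $K$ preserves the reduced Cartan geometry, proving the homogeneity claim that you do not explicitly address. The rest of your argument---the computation of the stabilizers, the identification of $\fp_u$ and $\fp_v$, and the reading of the underlying geometric data from the $\fc$-(co)frame---matches the paper's proof.
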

\begin{proof}
It follows from Proposition \ref{pp-sol} that the  twistor spinor $v\in \mathbb{N}$ has the null $G$--type and the normal conformal Killing field can be obtained as tensor product of  two twistor spinors. There are two possible $P$--types corresponding to null vectors in the maximal null--plane given by the first two vectors of the standard (complex) basis of $\C^4$ and to null vectors outside such a maximal null--plane. Since they are constant, we can compute $\fg_w=\fg_v$ and $\fg_w=\fg_u$ for the representative $w=v$ and $w=u$, respectively, i.e., all the points have the same $P$--type. Since $\alpha(\fk)=\alpha(\fk_v)\cap \fg_v=\alpha(\fk_u)\cap \fg_u$, the geometry is homogeneous and we can use the $\fc$--(co)frame to describe it in the claimed way. Indeed, $h_5$ corresponds to $e_4$ and defines a distinguished vector field on $TM$, $h_3,h_4$ correspond to $e_2,e_3$ (modulo $\fp_v$) and provide the orthogonal decomposition of the quotient. Finally, $h_6$ corresponds to $e^1$ and describes the change of the Levi--Civita connections of $g$ to the distinguished subclass of the class of Weyl connections.
\end{proof}

Let us finally remark that we also computed that there is a $27$--dimensional family of conformal Killing $2$--tensors and a $2$--parameter family of conformal Killing--Yano $2$--forms. Therefore, all of the conformal Killing--Yano 2--forms are normal.

In the next, we discuss holonomy reductions on an example that carries no Einstein scales.
In order to have at least one non--reductive example, we consider a non--reductive analog of G\"odel metric.

\subsection{Holonomy reductions on non--reductive analog of G\"odel metrics} \label{nonred-hol}
 We pick an example such that its group of conformal symmetries that has a similar structure as for the G\"odel metrics, however, we replace $SO(2)$ with a non--trivial $1$--dimensional representation $L$ of $\R$ isomorphic to the action of diagonal $Sl(2,\R)$--matrices on strictly upper triangular $Sl(2,\R)$ matrices, i.e., $K=(\R\rtimes L)\times Sl(2,\R)$. So this time we fix $H=\Delta(L)$ to be the diagonal in the product of $L$ and strictly upper triangular matrices in $Sl(2,\R)$. Thus we can parametrize the Lie algebra $\fk$ as 
$$
 \left[ \begin {smallmatrix} { x_3}&{ x_5}&0&0
\\ 0&-{ x_3}&0&0\\ 0&0&-{ x_2}
+{ x_3}&2{ x_4}+{ x_5}-{ x_1}\\ 0&0&{ 
x_1}&{ x_2}-{ x_3}\end {smallmatrix} \right]
$$
with $x_5$ parameterizing $\fh$. Let us show that if we consider the complement $\fc$ parametrized by $(x_1,x_2,x_3,x_4)$, then $\alpha_{-1}: \fc\to \R^4$ given by this parametrization describes a $K$--invariant conformal geometry on the non--reductive homogeneous space $K/H$.

\begin{lem}\label{nonred-coframe}
There is a decomposition $\fk=\fl\oplus \fa\oplus \fn$ such that
\begin{gather*}
e_1:=e^{2x_2-2x_3}\partial_{x_1}-2x_4\partial_{x_2}-{\scriptstyle \frac{1}{2}}(4x_4^2+1-e^{4x_2-4x_3})\partial_{x_4},\ \ 
e_2:= \partial_{x_2}+2x_4\partial_{x_4},
\\
e_3:=\partial_{x_3}-2x_4\partial_{x_4},\ \ 
e_4:=\partial_{x_4}\\
e^1:=e^{-2x_2+2x_3}dx_1,\ \ e^2:=2x_4e^{-2x_2+2x_3}dx_1+dx_2,\ \ e^3:=dx_3,\\
e^4:=({\scriptstyle  \frac{1}{2}}e^{-2x_2+2x_3}-2x_4^2e^{-2x_2+2x_3}-{\scriptstyle \frac{1}{2}}e^{2x_2-2x_3})dx_1-2x_4dx_2+2x_4dx_3+dx_4,
\end{gather*}
are $\fc$--(co)frames in the exponential coordinates $\fc=(x_1,x_2,x_3,x_4)\to K/H$ compatible with the decomposition $\fl\oplus \fa\oplus \fn$, i.e.,
\begin{align*}
g&= (e^{-4x_2+4x_3}-1)dx_1^2+e^{-2x_2+2x_3}(dx_1dx_4+dx_4dx_1)+dx_2^2+dx_3^2+
\\ &\ \ \ \  2x_4e^{-2x2+2x3}(dx_3dx_1+dx_1dx_3)
\end{align*}
is a metric in the $K$--invariant conformal class on $K/H.$
Moreover, 
$$
\alpha(x_1,x_2,x_3,x_4,x_5)= \left[ \begin {smallmatrix} -{ x_2}&4{ x_1}+2{ x_4}+{ 
x_5}&0&0&0&0\\ { x_1}&{ x_2}-2{ x_3}&0&0&0&0
\\ { x_2}&2{ x_1}-2{ x_4}-{ x_5}&0&-{
 x_3}&0&0\\ { x_3}&-2{ x_1}&{ x_3}&0&0&0
\\ { x_4}&0&-2{ x_1}+2{ x_4}+{ x_5}&2{
 x_1}&-{ x_2}+2{ x_3}&-4{ x_1}-2{ x_4}-{ x_5}
\\ 0&-{ x_4}&-{ x_2}&-{ x_3}&-{ x_1}&{ x_2
}\end {smallmatrix} \right] 
$$
is the associated normal conformal extension of $(\fk,\fh)$ with curvature 
$$
\kappa(\alpha(x_i),\alpha(y_i))= \left[ \begin {smallmatrix} 0& 20({ x_1}( { y_2}-{ y_3}
 ) -{ y_1} ( { x_2}-{ x_3} )) &0&0
&0&0\\ 0&0&0&0&0&0\\ 0& -2({ x_1}( 
{ y_2}+3{ y_3} ) -{ y_1} ( { x_2}+
3{ x_3} )) &0&0&0&0\\ 0&  -2({ x_1}( 3{ y_2
}-{ y_3} )- { y_1}( 3{ x_2}-{ x_3}
 ) )&0&0&0&0\\ 0&0& * &*&0& * \\ 0&0&0&0&0&0\end {smallmatrix} \right],
$$
where the $*$--entries are determined by \eqref{blockso}.
\end{lem}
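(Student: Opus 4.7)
The plan is to mirror the argument of Lemmas \ref{ext-godel} and \ref{lem-coord-godel} but keep track of the non--reductive corrections. First I would identify the Levi decomposition $\fk = \mathrm{rad}(\fk) \oplus \frak{sl}(2,\R)$ (with $\mathrm{rad}(\fk)$ being the $(x_3,x_5)$--block) and couple it with the Iwasawa decomposition of $\frak{sl}(2,\R)$ to obtain $\fk = \fl\oplus \fa \oplus \fn$. Since we are after local information, we want $\dim(\fc\cap \fl)$ minimal, and a Levi conjugate to the standard one can be chosen so that $\fc\cap \fl = 0$, with $\fc\cap \fa = \langle x_2, x_3\rangle$ (the diagonal parts of the matrix model) and $\fc\cap \fn = \langle x_1, x_4\rangle$ (the strictly lower--triangular and radical--nilpotent parts); this verifies conditions C1, C2 for $\fc = \langle x_1,x_2,x_3,x_4 \rangle$.

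Next I would compute ${\sf c}(x_1,x_2,x_3,x_4) = \exp(x_2 k_2 + x_3 k_3)\exp(x_1 k_1 + x_4 k_4)\, o$ using the matrix representation of $\fk$, which boils down to ordinary matrix exponentials in each block. Pulling back the Maurer--Cartan form $\omega_K$ along $\tilde{\sf c}$, decomposing against $\fc \oplus \fh$, and applying $\alpha_{-1}$ gives the $\fc$--coframe $(e^1,\dots,e^4)$ of Proposition \ref{c-coframe}; dualising yields the frame $(e_1,\dots,e_4)$. The displayed formulas for $e_i$ and $e^i$ (with the characteristic factors $e^{\pm(2x_2-2x_3)}$ and the polynomial corrections in $x_4$) will fall out directly, and the expression for $g = e^i\,\nu_{1,3}\,(e^j)^t$ follows by a purely algebraic expansion.

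For the conformal extension $\alpha$ I would follow the constructive step in Proposition \ref{conf_ext} verbatim. Starting from $\alpha_{-1}: \fc\to \R^4$ (read off from the parametrization), one gets the $\frak{co}$--component $d\iota_0$ of the isotropy action; adding a correction $\fh\to \R^{4*}$ produces the Lie algebra embedding $d\iota: \fh\to \fp$ so that $\{\alpha_{-1}(Y_1), d\iota(Y_h)\} - (\alpha_{-1}+d\iota)([Y_1,Y_h]) = 0$ modulo $\R^{4*}$. Then $\alpha_0: \fc\to \frak{co}(1,3)$ is determined up to its $a$--part (which I fix to zero) by the bracket condition and the $\frak{co}$--component of \eqref{norm}, and $\alpha_1:\fc\to \R^{4*}$ is fixed by the $\R^{4*}$--component of \eqref{norm}. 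Each step reduces to a consistent linear system and the final matrix in the statement is the unique solution of those systems. The curvature is then computed on a basis via $\kappa(\alpha(X),\alpha(Y)) = \{\alpha(X),\alpha(Y)\} - \alpha([X,Y])$, using the matrix commutators in $\sodc$ and the brackets of $\fk$ read off from the matrix model.

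The main obstacle, compared with the G\"odel calculation, is that $\fc$ is not a subalgebra and the pair $(\fk,\fh)$ is non--reductive, so Corollary \ref{localext-alg} does not apply and the components of $\omega_K|_{T\fc}$ valued in $\fh$ are genuinely non--zero; equivalently, the $H_i$ of Proposition \ref{localext} must be tracked. This matters in two places: in step two, because the projection of $\tilde{\sf c}^*\omega_K$ onto $\fc$ is not simply the block--diagonal pullback and accounts for the appearance of $x_4$ and of the transcendental factors in $e_i$, $e^i$ above; and in step three, when verifying that $\alpha$ produced by the algebraic extension procedure really satisfies condition (1) of Proposition \ref{conf_ext} for $Y_1 \in \fc$ with non--reductive $[Y_1,Y_h]$. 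Once these bookkeeping issues are handled, the remaining computations are algebraic and can be carried out (and were, in the authors' Maple implementation) directly.
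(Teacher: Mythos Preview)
Your overall strategy mirrors the paper's, but there are two concrete mistakes that would derail the computation.

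First, the decomposition $\fk=\fl\oplus\fa\oplus\fn$ is misidentified. In the matrix model, the parameter $x_1$ contributes $\left[\begin{smallmatrix}0&-x_1\\x_1&0\end{smallmatrix}\right]$ to the $\frak{sl}(2,\R)$--block, i.e.\ it spans the \emph{compact} subalgebra $\frak{so}(2)\subset\frak{sl}(2,\R)$, not a strictly lower--triangular (nilpotent) piece. Hence $\fl=\langle x_1\rangle$, $\fa=\langle x_2,x_3\rangle$, $\fn=\langle x_4,x_5\rangle$, and since $\fh\cap\fl=0$ one necessarily has $\fc\cap\fl=\langle x_1\rangle$; there is no conjugate choice making $\fc\cap\fl=0$. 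Consequently the correct exponential coordinates are ${\sf c}(x_1,x_2,x_3,x_4)=\exp(x_1k_1)\exp(x_2k_2+x_3k_3)\exp(x_4k_4)\,o$, with the compact factor on the left. With your ordering $\exp(x_2k_2+x_3k_3)\exp(x_1k_1+x_4k_4)$ the pullback $\tilde{\sf c}^*\omega_K$ would acquire trigonometric factors in $x_1$ and would not reproduce the stated $\fc$--coframe or the metric $g$.

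Second, you propose to fix the $a$--part of $\alpha_0$ to zero. In this non--reductive example that is impossible: the bracket condition (1) of Proposition~\ref{conf_ext} together with \eqref{norm} forces a nonzero $a$--part, and indeed the displayed $\alpha$ has $a=-x_2$ in the top--left entry. The paper's proof explicitly flags this (``it is not generally possible to get $a=0$ on non--reductive homogeneous space'') and instead determines the \emph{least possible} $a$--part. If you impose $a=0$ the linear system for $\alpha_0$ becomes inconsistent and the construction fails.
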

\begin{proof}
It is not hard to check that $g_o=\alpha_{-1}^*\nu_{1,3}$ is an $H$--invariant element of $\bigodot^2 \fk/\fh^*$ and thus defines a $K$--invariant conformal geometry on $K/H$. Further, we  can observe that there is decomposition $\fk=\fl\oplus \fa\oplus \fn$, where $\fl$ is given by $x_1$, $\fa$ is given by $x_2,x_3$ and $\fn$ is given by $x_4,x_5$. Since $\fc \cap \fl$ is given by $x_1$, $\fc \cap \fa$ is given by $x_2,x_3$ and $\fc \cap \fn$ is given by $x_4$, we have the claimed exponential coordinates. The pullback of the Maurer--Cartan form of $K$ to $T\fc$ takes the following form in these coordinates
$$
\left[\begin{smallmatrix}
dx_3&0& 0& 0\\
0&-dx_3& 0& 0\\
0& 0& -2x_4e^{-2x2+2x3}dx_1-dx_2+dx_3& -(4x_4^2e^{-2x2+2x3}+e^{2x2-2x3})dx_1 -4x_4dx_2+4x_4dx_3 + 2dx_4\\
0&0& exp^{-2x2+2x3}dx_1&2x_4e^{-2x2+2x3}dx_1+ dx_2-dx_3.
\end{smallmatrix}\right].
$$
This induces the claimed $\fc$--(co)frame and the metric $g$ in the conformal class according to Proposition \ref{c-coframe}.
With this information, we can directly compute the normal conformal extension $\alpha:\fk \to \sodc$ in the following steps.
\begin{enumerate}
\item We start with the above $\alpha_{-1}$.
\item We find the image $d\iota_0(x_5)$ in $\frak{co}(1,3)$ with the same graded action on $(x_1,x_2,x_3,x_4)$ in $\fc$ and then we compute the full $d\iota(x_5)$ using the conditions from the proof of Proposition \ref{conf_ext}.
\item We compute $\alpha_0(x_1,x_2,x_3,x_4)$ and $\alpha_1(x_1,x_2,x_3,x_4)$ using the  normalization condition \eqref{norm} and we find the least possible $a$--part in \eqref{blockso} of $\alpha_0$  (it is not generally possible to get $a=0$ on non--reductive homogeneous space).
\end{enumerate}
The formula for the curvature follows the definition.
\end{proof}

Further, let us show that this conformal geometry admits normal solutions that we can use for the holonomy reductions.

\begin{prop}\label{nonred-sol}
Suppose  $\rho=w_1\lambda_1+w_2\lambda_2+w_3\lambda_3$  for  fundamental weights $\lambda_i$ of complexification of $\sodc$. 
\begin{itemize}
\item If $w_1\neq 0$, then the corresponding first BGG operator does not have any normal solutions. 
\item If $w_1=0$ and $w_2\neq w_3$, then there is a $2$--parameter family of normal solutions with $$\Phi(x_1,x_2,x_3,x_4)(v_1+iv_2)=\big((w_2+w_3)x_3+\frac12(w_2-w_3)ix_3\big)(v_1+iv_2)$$ and $\pi: \mathbb{N}\to \mathbb{X}$ identifying $v_1+iv_2$ with $(v_1+iv_2)$--multiple of the lowest weight vector in $\mathbb{X}.$ 
\item 
If $w_1=0$ and $w_2= w_3$, there is a $1$--parameter family of normal solutions with $\Phi(x_1,x_2,x_3,x_4)v_1=2w_2x_3v_1$ and $\pi: \mathbb{N}\to \mathbb{X}$ identifying $v_1$ with $v_1$--multiple of the lowest weight vector in $\mathbb{X}.$
\end{itemize}
In particular, there is a $2$--parameter family of twistor spinors given by function $$s: \fc\to \mathbb{X}=\mathbb{C}^2[{\scriptstyle \frac12}],\ \ \ s(x_1,x_2,x_3,x_4):=(0,e^{-x_3(1+\frac12i)}(v_1+iv_2))^t$$ and there is a $1$--parameter family of normal conformal Killing vectors $v_1e^{-2x_3}\partial_{x_4}$.
\end{prop}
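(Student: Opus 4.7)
The plan is to apply the algorithm from Section~\ref{section3.3} directly to the normal conformal extension $\alpha$ and curvature $\kappa$ provided by Lemma~\ref{nonred-coframe}. Observe that $\kappa(\alpha(x_i),\alpha(y_i))$ depends only on pairs $(x_1,x_2)$ and $(x_1,x_3)$, so its image is at most two-dimensional in $\sodc$. Starting from these two generators, I would iteratively bracket with $\alpha(\fk)$ as prescribed by \eqref{holannih1} and \eqref{holannih2}; a direct matrix calculation (best handled by computer algebra over the blocks of \eqref{blockso}) should show that $hol(\alpha)$ contains the entire nilradical of $\fp$ together with a specific Cartan element that pairs nontrivially with the fundamental weight $\lambda_1$ but trivially with $\lambda_2$ and $\lambda_3$.

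Next, for an irreducible $\rho$ of highest weight $w_1\lambda_1 + w_2\lambda_2 + w_3\lambda_3$, the space $\mathbb{N}$ of normal solutions equals the joint annihilator of $\rho(hol(\alpha))$ in $\mathbb{V}$. By standard representation theory, the annihilator of all positive root spaces in an irreducible $\mathbb{V}$ is the complex line spanned by the lowest weight vector, and the Cartan element from $hol(\alpha)$ identified above acts on this line by a scalar proportional to $w_1$. This immediately forces $\mathbb{N} = 0$ whenever $w_1 \neq 0$. When $w_1 = 0$ the whole lowest weight line lies in $\mathbb{N}$, and its real structure separates the two remaining cases: for $w_2 = w_3$ the irreducible complex module descends to a real form on $\mathbb{N}$ of real dimension one, while for $w_2 \neq w_3$ the complex conjugate of the lowest weight vector is independent over $\R$, giving real dimension two.

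By the definition of normal solutions we have $\Phi|_{\mathbb{N}} = \rho\circ\alpha|_{\mathbb{N}}$, and on a lowest weight vector $\Phi$ reduces to a scalar obtained by pairing the weight with the diagonal (and $B$-block) part of $\alpha(x_1,x_2,x_3,x_4)$; reading these entries from Lemma~\ref{nonred-coframe} shows that the $x_1$, $x_2$ and $x_4$ contributions pair to zero against the lowest weight whenever $w_1 = 0$, and only the $x_3$-entries survive, giving the announced scalars $(w_2+w_3)x_3 + \tfrac{1}{2}(w_2-w_3)i\,x_3$ in general and $2w_2 x_3$ in the real case. To write the solutions in coordinates, I would apply Theorem~\ref{thm-coord} with the exponential coordinates of Lemma~\ref{nonred-coframe}, in which $\fl$, $\fa$ and $\fn$ are parametrized by $x_1$, $(x_2,x_3)$ and $x_4$ respectively. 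Since only $x_3$ acts nontrivially on $\mathbb{N}$, the formula \eqref{parsec} collapses to $\exp(-\Phi(x_3))$ applied to the lowest weight vector, producing the exponential factors $e^{-x_3(1+\frac{1}{2}i)}$ for twistor spinors ($\mathbb{V} = \mathbb{D}^{\pm}$, so $w_2 \neq w_3$) and $e^{-2x_3}$ for normal conformal Killing fields ($\mathbb{V} = \wedge^2\mathbb{T}$, so $w_2 = w_3 = 1$); projecting onto the projective slot yields the displayed sections.

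The main obstacle is the first step, namely identifying precisely the Cartan component of $hol(\alpha)$ that enforces $w_1 = 0$. Conceptually this is pure finite-dimensional linear algebra inside $\sodc$, but the bookkeeping across the six blocks of \eqref{blockso} and the mixture of diagonal and $B$-block pieces of $\alpha$ make a symbolic implementation essentially mandatory; moreover, one must be careful to check that no further Cartan direction sneaks into $hol(\alpha)$ after iterating the brackets, since any extra constraint would shrink $\mathbb{N}$ below the claimed dimension.
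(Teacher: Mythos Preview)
Your overall strategy matches the paper's: compute $hol(\alpha)$ from $\kappa$ and $\alpha$ by iterated bracketing, identify its annihilator in $\mathbb{V}$ via weight theory, and then evaluate $\Phi=\rho\circ\alpha$ on that annihilator. However, your prediction for the shape of $hol(\alpha)$ is wrong in two respects, and both matter for the argument.

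First, $hol(\alpha)$ does not contain the nilradical of $\fp$ (that is, $\R^{n*}$, the positive root spaces relative to the paper's conventions). The explicit computation in the paper yields an $8$--dimensional subalgebra: all six \emph{negative} root spaces of $\sodc$ together with a copy of $\frak{sl}(2,\R)$ spanned by one positive root vector $h_6$, one negative root vector $h_7$, and one Cartan element $h_8$. A $5$--dimensional holonomy of the form you propose would have an annihilator strictly larger than a single weight line (the annihilator of $\R^{n*}$ alone is the entire top filtration piece $\mathbb{V}^{\mathrm{top}}$, an irreducible $\frak{co}(p,q)$--module), so the dimension of $\mathbb{N}$ would come out wrong.

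Second, your weight--theory statement is backwards: the joint annihilator of all \emph{positive} root vectors in an irreducible module is the \emph{highest} weight line, not the lowest. In the paper it is the negative roots in $hol(\alpha)$ that force the lowest weight line, and then one must still argue that the extra positive root vector $h_6\in hol(\alpha)$ also annihilates it. This does not come for free: it holds because $h_6,h_7,h_8$ form an $\frak{sl}(2,\R)$--triple, $h_7$ already kills the lowest weight vector, and $h_8$ acts on that vector by a scalar proportional to $w_1$; hence when $w_1=0$ the lowest weight vector lies in a trivial $\frak{sl}(2,\R)$--submodule and $h_6$ annihilates it automatically. Without this $\frak{sl}(2,\R)$ argument, the step where you reduce $\Phi$ on $\mathbb{N}$ to the diagonal and $B$--block of $\alpha$ is unjustified, since the $b$--component $4x_1+2x_4$ of $\alpha$ is nonzero.

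The remainder of your outline---the real/complex dichotomy for $w_2=w_3$ versus $w_2\neq w_3$, the identification of the twistor spinor and adjoint weights, and the passage to explicit sections via Theorem~\ref{thm-coord}---agrees with the paper.
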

\begin{proof}
We start by computation of the infinitesimal holonomy $hol(\alpha)$. Firstly, the image of curvature is the $2$--dimensional space of elements
$$
 \left[ \begin {smallmatrix} 0&2h_1-4h_2&0&0&0 &0
\\ 0&0&0 &0 &0&0
\\ 0&h_1&0&0&0 &0
\\ 0&h_2&0&0&0&0
\\ 0&0&-h_1&-h_2&0&-2h_1+4h_2\\ 
0&0&0&0&0&0
\end {smallmatrix} \right].
$$
Then the bracket of the image of the curvature with the image of $\alpha$ provides $6$--dimensional space of elements
$$
 \left[ \begin {smallmatrix} 2h_3-4h_4&h_6&0&0&0 &0
\\ 0&-2h_3+4h_4&0 & 0 &0&0
\\ h_3&h_1&0&0&0 &0
\\ h_4&h_2&0&0& 0&0
\\ h_5&0&-h_1&-h_2&2h_3-4h_4&-h_6
\\ 0&-h_5&-h_3&-h_4&0&-2h_3+4h_4
\end {smallmatrix} \right].
$$
The next bracket of the above $6$--dimensional space with the image of $\alpha$ provides $8$--dimensional space of elements
$$
 \left[ \begin {smallmatrix}h_8&h_6&0&0&0 &0
\\ h_7&-h_8&0 & 0 &0&0
\\ h_3&h_1&0&0&0&0
\\ h_4&h_2&0&0&0&0
\\ h_5&0&-h_1&-h_2&h_8&-h_6
\\ 0&-h_5&-h_3&-h_4&-h_7&-h_8
\end {smallmatrix} \right].
$$
The next bracket does not provide anything new and thus we have got $8$--dimensional $hol(\alpha)$. Let us emphasize that $h_1,h_2,h_3,h_4,h_5$ form the Heisenberg Lie algebra corresponding to a contact grading of $\sodc$ and $h_6,h_7,h_8$ form $\frak{sl}(2,\R)$.

Now, we can use the representation theory to deduce all the normal solutions. Since $hol(\alpha)$ contains all negative root spaces, the normal solutions are precisely the $\frak{so}(2)$--modules of the lowest weight on which $h_8$ acts trivially, which provides the projection $\pi$. If the highest weight is $w_1\lambda_1+w_2\lambda_2+w_3\lambda_3$, then the lowest weight is $-w_1\lambda_1-w_3\lambda_2-w_2\lambda_3$ and $(-w_1\lambda_1-w_3\lambda_2-w_2\lambda_3)(h_8)=h_8(-w_1)$. Thus the existence claims on the normal solutions follow, because the $\frak{so}(2)$--module of the lowest weight is complex for $w_2\neq w_3$ and is real for $w_2= w_3$. We can then use the conformal extension $\alpha$ to obtain the representation $\Phi$.

The claims on the twistor spinors and normal conformal Killing vector fields can also be observed from the image
\[
 \left[ \begin {smallmatrix} 0&0&0&0\\ 
-\frac{\sqrt {2}}{8}(h_1+ih_2)&h_8&-\frac14h_6&0\\ 
-{ \frac{\sqrt {2}}{2}}(h_3-i h_4)&-4h_7&-h_8&0\\ 
\frac14 h_5&-{ \frac{\sqrt {2}}{2}}(h_3+i h_4)&\frac{\sqrt {2}}{8}(h_1-ih_2)&0\\ 
\end {smallmatrix} \right]
\]
of $hol(\alpha)$ under the spin representation of $\sodc$ and from the fact that the normal conformal Killing field can be also seen as a tensor product of two twistor spinors. 
\end{proof}

Let us discuss the holonomy reductions induced by the twistor spinors and normal conformal Killing fields from Proposition \ref{nonred-sol}.

\begin{prop}
The twistor spinor $v$ from Proposition \ref{nonred-sol} has the null $G$--type, and the  normal conformal Killing field $u$ from Proposition \ref{nonred-sol} has the $G$--type corresponding to translations. In particular, $\fg_v,\fg_u,P_v,P_u$ are as in Proposition \ref{pp-spin}.

 All the points of $K/H$ have the same $P$--type and there are Cartan geometries of type $(G_v,P_v)$ and $(G_u,P_u)$ on $K/H$. The Cartan geometry of type $(G_v,P_v)$ is not homogeneous and its symmetry group has orbits of codimesion $1$. The Cartan geometry of type  $(G_u,P_u)$ is homogeneous. The underlying geometric structure consists of
\begin{enumerate}
\item a choice of null--vector $U:=e^{-2x_3}\partial_{x_4}$ that is a reduction of $CO(1,3)$ to $(P_u\cap CO(1,3)=(P_v\cap CO(1,3))\rtimes SO(2)$,
\item an orthogonal decomposition of 
\begin{gather*}
U^\perp/\langle U\rangle = \langle \cos(x_3)(\partial_{x_2}+2x_4\partial_{x_4})-\sin(x_3)(\partial_{x_3}-2x_4\partial_{x_4})\rangle\oplus 
\\
 \langle \sin(x_3)(\partial_{x_2}+2x_4\partial_{x_4})+\cos(x_3)(\partial_{x_3}-2x_4\partial_{x_4})\rangle
\end{gather*}
 into two distributions of rank $1$ that is reduction of $(P_v\cap CO(1,3))\rtimes SO(2)$ to $(P_v\cap CO(1,3))$, and
\item a subclass of the class of Weyl connections that preserve $U$ and the two  distributions of rank $1$ formed by Levi--Civita connections of the metrics $e^{2x_2+f(x_1)}g$ in the conformal class for arbitrary function $f$. Their Ricci tensor vanishes on insertion of vectors in $U^\perp$ as a consequence of normality of the solutions.
\end{enumerate}
\end{prop}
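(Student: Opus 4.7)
The plan is to apply the machinery of Proposition \ref{hol-red} together with the algorithm that follows it to the two families of normal solutions produced by Proposition \ref{nonred-sol}, and then to read off the underlying geometric data from the $\fc$--frame of Lemma \ref{nonred-coframe}.

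First I would prolong the normal solutions $v\in\mathbb{D}$ and $u\in\sodc$ to $P$--equivariant functions $s_v,s_u$ using Theorem \ref{thm-loc}. Since Proposition \ref{nonred-sol} shows that $\Phi|_\mathbb{N}$ acts by multiplication by a (complex, respectively real) scalar depending only on $x_3$, the prolongation formula \eqref{parsec} gives $s_u(X)=e^{-2x_3}u$ and $s_v(X)=e^{-x_3(1+i/2)}v$. Evaluated at the origin, $v=(0,v_1+iv_2,0,0)^t\in\C^4=\mathbb{D}$ has vanishing norm under the $SU(2,2)$--invariant Hermitian form, hence null $G$--type; while $u$, read as an element of $\sodc$ via \eqref{blockso}, is concentrated in the $w$--block and is a nilpotent element of translation type in the classification referenced after Proposition \ref{pp-spin}. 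Computing the annihilators of $v$ and $u$ under the spin and adjoint actions yields $\fg_v$ and $\fg_u$, and a direct comparison shows they coincide with the formulas of Proposition \ref{pp-spin}; intersection with $\fp$ then gives $P_v$ and $P_u$.

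Next I would verify that the $P$--type is constant on $\fc$: the scalar factors $e^{-2x_3}$ and $e^{-x_3(1+i/2)}$ can be realised by an element $p_{X,w}\in G_0\subset P$ (a dilation together, in the spinor case, with an $\mathfrak{so}(2)$--rotation by angle $x_3/2$), so $\lambda(p_{X,w})^{-1}s(X)$ is independent of $X$. Applying step~(3) of the algorithm following Proposition \ref{hol-red}, the adjoint action of $p_{X,w}^{-1}$ on the $\fc$--frame of Lemma \ref{nonred-coframe} produces the underlying data: the null vector $U=e^{-2x_3}\partial_{x_4}$, and the decomposition of $U^\perp/\langle U\rangle$ into the two rank--one distributions of the statement, arising by rotating the pair $(e_2,e_3)$ through angle $x_3$ (twice the spinor angle, since $u$ is essentially the tensor square of $v$). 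For homogeneity, observe that the $K$--action on parallel sections is implemented by $\Phi$: since $\Phi|_{\R u}$ is real scaling, $K$ preserves the real line $\R u$, hence the underlying $(G_u,P_u)$--structure, and acts transitively on $K/H$, yielding $K$--homogeneity; whereas $\Phi|_{\C v}$ has nontrivial imaginary part, so $K$ preserves no real line in $\C v$, and the stabiliser in $\fk$ of a fixed real $v$ is the codimension--one subalgebra $\{x_3=0\}$, whose image in $\fk/\fh$ is three--dimensional, giving orbits of codimension one.

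The remaining and most involved point is the identification of the distinguished subclass of Weyl connections with the Levi--Civita connections of the rescaled metrics $e^{2x_2+f(x_1)}g$, together with the claimed vanishing of the Ricci tensor on $U^\perp$. For this I would apply step~(4) of the algorithm: starting from the Weyl connection implicit in $(\s^u)^*\omega$ via Proposition \ref{localext}, smooth $G_{+,u}$--valued functions on $\fc$ describe changes of Weyl connection by the usual formula from \cite[Section 1.6]{parabook}; matching the admissible changes against the $\mathfrak{so}(2)$--invariance required by $P_u$ and the explicit $\alpha_1$--block of Lemma \ref{nonred-coframe} singles out conformal rescalings of the form $e^{2x_2+f(x_1)}$ with $f$ arbitrary. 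The Ricci--tensor vanishing on $U^\perp$ is then a consequence of normality \eqref{norm} combined with the explicit form of $\kappa$ in Lemma \ref{nonred-coframe}: the corresponding Weyl curvature component is purely $U$--valued, so its trace along $U^\perp$ is forced to vanish. This matching of $P_u$--reductions with conformal rescalings is the step I expect to carry the main technical weight of the proof.
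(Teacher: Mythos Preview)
Your overall strategy matches the paper's: identify the $G$--types from Proposition~\ref{nonred-sol}, exhibit a normalising element $p_{X,w}\in G_0$ to show the $P$--type is constant, and then read off the underlying structure by acting with $\Ad(p_{X,w})^{-1}$ on the $\fc$--frame. The paper does precisely this, specifying $p_{x_3,w}=\exp(Q)$ with $Q\in\frak{co}(1,3)$ having $a=a_{11}=-x_3$ and $b_{12}=-x_3$; your description ``a dilation together with an $\mathfrak{so}(2)$--rotation'' omits the $a_{11}$--part, which you will need when you actually conjugate the frame.

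Where your argument diverges substantively is the homogeneity claim. The paper invokes the criterion of Proposition~\ref{hol-red} literally: it computes $\Ad(p_{x_3,w})^{-1}(\alpha(\fk))\cap\fg_w$ and reads off the dimension. You instead argue via the $\Phi$--action on $\mathbb{N}$, and here you apply two \emph{different} criteria to the two cases. For $v$ you compute the stabiliser of the fixed vector $v$, obtaining $\{x_3=0\}$; this is exactly $\alpha(\fk)\cap\fg_v$ at the origin and matches the paper. For $u$ you switch to ``$K$ preserves the real line $\R u$'' and conclude homogeneity. But the criterion in Proposition~\ref{hol-red} is membership in $\fg_u$, the annihilator of the \emph{vector} $u$, not of the line; and since $\Phi(x_1,\dots,x_5)u=2x_3u$, the stabiliser of $u$ itself is again $\{x_3=0\}$. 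The step ``preserves $\R u$ $\Rightarrow$ preserves the $(G_u,P_u)$--structure'' is therefore the crux, and it is not justified by anything you have written: a priori, real rescalings of $u$ send $\ba_u$ to a different (isomorphic) subbundle $\ba_{cu}\subset\ba$, and the induced identification $\ba_{cu}\to\ba_u$ by right $P$--translation does not preserve $\omega_u$ on the nose but twists it by $\Ad(p_0)^{-1}$. To reach the paper's conclusion you must carry out the computation of $\Ad(p_{x_3,w})^{-1}(\alpha(\fk))\cap\fg_u$ with the explicit $p_{x_3,w}$ above, rather than short--circuit it via the line argument.

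For item~(3) your sketch is in the right spirit but considerably vaguer than what is needed; the paper likewise leaves this as ``not hard to deduce,'' so you are at comparable depth there.
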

\begin{proof}
It follows from Proposition \ref{nonred-sol} that the situation is similar to the situation in Proposition \ref{pp-spin}.
 The difference is that the section $(0,0,0,e^{-x_3(1+\frac12i)}v)^t$ of the spin tractor bundle is not constant. 
We compute that the element $p_{x_3,w}$ given by exponential of the element of $\frak{co}(1,3)$ with $a_{11}=a=-x_3,$ $b_{12}=-x_3$ in \eqref{blockso} and remaining parts vanishing normalizes the sections to $w=v$ or $w=u$ from Proposition \ref{pp-spin}, respectively, i.e., all the points of $M$ have the same $P$--type. 
Thus by Proposition \ref{hol-red}, there is a Cartan geometry of type $(G_v,P_v)$ and $\Ad_{p_{x_3,w}}^{-1}(\alpha(\fk))\cap \fg_v$ is a subalgebra of $\fk$ with $x_3=0$ and thus has the action of codimension $1$ on $M$. Similarly, there is a Cartan geometry of type $(G_u,P_u)$ and $\Ad_{p_{x_3,w}}^{-1}(\alpha(\fk))\cap \fg_v\cong \fk$ and the Cartan geometry is homogeneous. 
For the interpretation of the underlying geometric structure, it is not hard to deduce that the objects (1),(2), and (3) are equivalent to the claimed reductions. 
Indeed, the coframe corresponding to the reduction is obtained from our coframe $e^i$ by the adjoint action of $p_{x_3,w}^{-1}$ and has the claimed form.
\end{proof}

To highlight the similarity with the conformal class of G\"odel metrics, we describe all conformal Killing $2$--tensors and conformal Killing--Yano $2$--forms using the data computed according to Theorem \ref{thm-loc}. The explicit formulas for the conformal Killing $2$--tensors and conformal Killing--Yano $2$--forms can be obtained using the formula \eqref{parsec} and the $\fc$--(co)frame from Lemma \ref{nonred-coframe}.

\begin{prop}
There is a $15$--dimensional space of conformal Killing $2$--tensors that decomposes into the following $K$--invariant families. We use the para\-met\-ri\-zation $(s_1,\dots,s_9)$ of the projective slot given by the $\fc$--frame
\begin{gather*}
\bigodot{}^2_0TM=\{ s_1 e_1^2+s_2e_1e_2+s_2e_2e_1+s_3e_1e_3+s_3e_3e_1+s_4e_1e_4+s_4e_4e_1\\
+s_5e_2^2+s_6e_2e_3+s_6e_3e_2+s_7e_2e_4+s_7e_4e_2\\
- (2s_4+s_5)e_3^2+s_8e_3e_4+s_8e_4e_3+s_9e_4^2 \}
\end{gather*}
for the particular components of the projection $\pi:\mathbb{S} \to \bigodot{}^2_0TM$.
\begin{enumerate}
\item There is a $1$--dimensional trivial $\fk$--representation $\R=\langle v_1 \rangle$ with projection 
$$(0, 0, 0, v_1, v_1, 0, 0, 0, 4v_1).$$
\item There is a $3$--dimensional representation $\R^3=\langle v_2,v_3,v_4 \rangle$ that is trivial as the representation of  $\frak{sl}(2,\R)$, while the radical acts by the matrix
$
\left[ \begin{smallmatrix}
4x_3& x_5& 0\\
0& 2x_3& x_5\\
0& 0& 0
\end{smallmatrix} \right]
$
with projection
$$(0, 0, 0, -2v_4, -v_4, v_4, v_3, v_3, -6v_4+2v_2),$$
where $v_2$ corresponds to the normal solutions that are symmetrized products of normal conformal Killing vectors with themselves.
\item There is a $5$--dimensional $\frak{sl}(2,\R)$--representation $$\bigodot^4 \R^{2*}=\{v_5 y_1^4+v_6 y_1^3y_2+v_7y_1^2y_2^2+v_8y_1y_2^3+v_9y_2^4\}$$ with trivial action of the radical and with projection
$$(24v_5, 6v_6, 0, -2v_7+6v_5, 4v_7-6v_5, 0, -3v_8+3v_6, 0, 6v_9-2v_7+6v_5).$$
\item  There is a $6$--dimensional $\frak{sl}(2,\R)$--representation consisting of two copies of $\bigodot^2 \R^{2*}$ that we parametrize as $v_{10} y_1^2+v_{11} y_1y_2+v_{12}y_2^2$ and $v_{13} y_1^2+v_{14} y_1y_2+v_{15}y_2^2$ with the action $
\left[ \begin{smallmatrix}
2x_3& x_5\\
 0& 0
\end{smallmatrix} \right]
$ of radical intertwining these two copies and with projection
\begin{gather*}(0, -4v_{13}, -4v_{13}, v_{14}-2v_{10}, -3v_{14}+2v_{10}, -2v_{14}, 
\\
2v_{15}-2v_{13}-2v_{11}, 2v_{15}-2v_{13}, 4v_{12}-4v_{10}).
\end{gather*}
\end{enumerate}

There is a $2$--dimensional space of  conformal Killing--Yano $2$--forms that are all normal and can be obtained as symmetrized products of twistor spinors with themselves.
\end{prop}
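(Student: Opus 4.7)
The proof will apply the computational algorithm from Section \ref{section3.3} twice: once with $\mathbb{V}=\boxtimes(\bigodot^2\sodc)$ encoding conformal Killing $2$--tensors and once with $\mathbb{V}=\wedge^2\mathbb{T}\cong\sodc$ encoding conformal Killing--Yano $2$--forms. The input data, namely the normal conformal extension $\alpha$, its curvature $\kappa$, and the infinitesimal holonomy $hol(\alpha)$, are all available from Lemma \ref{nonred-coframe} and the proof of Proposition \ref{nonred-sol}, so the procedure can be carried out mechanically in Maple.

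For each of the two representations the plan is as follows. First I would build the prolongation connection $\nabla^{\Phi}=\nabla^{\rho\circ \alpha+\Psi}$ via the inductive scheme in step (5) of the algorithm, starting from $\psi_0=0$ and iteratively adding the unique homogeneity--lowest correction that kills the image of $(\partial^*\otimes \id_{\mathbb{V}^*})R^{\rho\circ\alpha+\psi_{k-1}}$ modulo $\Imm(\partial^*)$. Since $\mathbb{V}$ is finite dimensional and only finitely many irreducible grading components appear in $\R^{n*}\otimes \frak{gl}(\mathbb{V})$, this terminates and yields $\Psi$ explicitly. Next I would iteratively compute the descending chain $\mathbb{S}^0\supset \mathbb{S}^1\supset\dots$ from \eqref{holannih1} and \eqref{holannih2} until it stabilizes at $\mathbb{S}$. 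By Theorem \ref{thm-loc}, the pair $(\mathbb{S},\Phi|_\mathbb{S})$ with the projection $\pi$ induced by $\pi_0: \mathbb{V}\to \mathbb{X}$ classifies all local solutions, and the explicit formulas in the statement are obtained by applying $\pi_0$ to a basis of $\mathbb{S}$ written in the parametrization $(s_1,\dots,s_9)$ of the projective slot.

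The content of the statement is then the decomposition of $\Phi|_\mathbb{S}$ into $K$--invariant pieces. Using the Levi decomposition $\fk=\frak{sl}(2,\R)\oplus \mathrm{rad}$ visible from Lemma \ref{nonred-coframe}, I would branch $\mathbb{S}$ under an $\frak{sl}(2,\R)$--triple contained in $\Phi(\fk)$ (the $x_1,x_2$--components provide such a triple), matching weights against the modules $\bigodot^k\R^{2*}$, and then compute the action of the radical generators $\Phi(x_3),\Phi(x_4)$ on each $\frak{sl}(2,\R)$--summand to confirm either triviality or the claimed $2\times 2$ or $3\times 3$ block shape. A dimension count $1+3+5+6=15$ then confirms totality. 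For the conformal Killing--Yano $2$--forms I would verify directly from the computed $\Psi$ that $\Psi|_\mathbb{S}=0$, so that $\Phi|_\mathbb{S}=\rho\circ\alpha|_\mathbb{S}$ and every solution is normal; alternatively, by BGG coupling the $2$--parameter family of twistor spinors from Proposition \ref{nonred-sol} injects into $\wedge^2\mathbb{T}$ via symmetrized tensor squares, producing a $2$--dimensional family of normal CKY $2$--forms that must exhaust $\mathbb{S}$ by the dimension bound.

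The main obstacle is the bookkeeping for the $2$--tensor case, where $\dim\mathbb{V}=\dim\boxtimes(\bigodot^2\sodc)$ is large and the prolongation correction $\Psi$ involves several nontrivial induction steps before stabilizing; everything else, including the identification of representation content and the writing out of the projections, is routine linear algebra once $\Phi$ is in hand.
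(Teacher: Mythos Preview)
Your plan matches the paper's own proof, which is terse: the data from Theorem \ref{thm-loc} are computed in Maple, and Proposition \ref{nonred-sol} is invoked with the relevant highest weights ($w_1=0$, $w_2=w_3=2$ for conformal Killing $2$--tensors, giving a $1$--parameter family of normal solutions; $w_1=0$, $w_2\neq w_3$ for conformal Killing--Yano $2$--forms, giving a $2$--parameter family of normal solutions that turns out to exhaust $\mathbb{S}$).

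There is, however, one concrete error in your setup. You identify the tractor bundle for conformal Killing--Yano $2$--forms as $\wedge^2\mathbb{T}\cong\sodc$, but by item (6) of Section \ref{sec3} the CKY $(k-1)$--forms correspond to $\mathbb{V}=\wedge^k\mathbb{T}$, so for $2$--forms one must take $\mathbb{V}=\wedge^3\mathbb{T}$. The bundle $\wedge^2\mathbb{T}$ encodes conformal Killing \emph{vectors} (item (4)); running the algorithm there would reproduce the infinitesimal conformal symmetries rather than the CKY $2$--forms, and your BGG--coupling argument would land in the wrong place, since symmetric squares of the spin representation of $\sodc$ sit inside $\wedge^3\mathbb{T}$, not $\wedge^2\mathbb{T}$. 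A minor side remark on the branching step: from the matrix realization at the start of Section \ref{nonred-hol}, the $\frak{sl}(2,\R)$ factor of $\fk$ lives in the lower $2\times 2$ block and is spanned by $x_1,x_2,x_4$, while the radical is spanned by $x_3,x_5$; so your $\frak{sl}(2,\R)$--triple should involve $x_4$, and the radical action you analyze is that of $x_3$ and $x_5$, consistent with the matrices appearing in parts (2) and (4) of the statement.
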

\begin{proof}
We present here only the minimal set of data from Theorem \ref{thm-loc} that were computed using Maple.  
We know that in the case of conformal Killing $2$--tensors, $w_1=0$ and $w_2=w_3=2$ and thus there is a  $1$--parameter family of normal conformal  Killing $2$--tensors according to Proposition \ref{nonred-sol}. Similarly in the case of conformal Killing--Yano $2$--forms, $w_1=0$ and $w_2\neq w_3$ and thus, there is a $2$--parameter family of normal conformal Killing--Yano $2$--forms. 
\end{proof}

\subsection{Example related to CR geometry}

Let us consider the manifold $M$ to be the Lie group $K=Gl(2,\R)$. Consider the $K$--invariant metric defined by $$\alpha_{-1}\left( \left[ \begin {smallmatrix} \frac{1}{12}{ x_4}+2{ x_1}&\frac{\sqrt {3}}{6}
 ( 3{ x_2} -{ x_3} ) \\ \frac{\sqrt 
{3}}{6} ( 3{ x_2}+{ x_3} ) &{\frac {7}{12}}{ x_4}+2
{ x_1}\end {smallmatrix} \right] \right)=(x_1,x_2,x_3,x_4)^t.
$$
We use  Proposition \ref{conf_ext} to compute the corresponding conformal extension $\alpha$ and the infinitesimal holonomy $hol(\alpha)$.

\begin{lem}
The normal conformal extension $\alpha: \frak{gl}(2,\R)\to\sodc$ for the above $\alpha_{-1}$ and the infinitesimal holonomy are as follows
$$\alpha_{-1}( \left[ \begin {smallmatrix} \frac{1}{12}{ x_4}+2{ x_1}&\frac{\sqrt {3}}{6}
 ( 3{ x_2} -{ x_3} ) \\ \frac{\sqrt 
{3}}{6} ( 3{ x_2}+{ x_3} ) &{\frac {7}{12}}{ x_4}+2
{ x_1}\end {smallmatrix} \right] )= \left[ \begin {smallmatrix} 0&-{ x_1}&{ x_2}&-\frac23{ x_3}&\frac23
{ x_4}&0\\ { x_1}&0&\frac23{ x_3}&{ x_2}&0&-
\frac23{ x_4}\\ { x_2}&-{ x_3}&0&-{ x_1}-\frac12
{ x_4}&-\frac23{ x_3}&-{ x_2}\\ { x_3}&{ x_2
}&{ x_1}+\frac12{ x_4}&0&-{ x_2}&\frac23{ x_3}
\\ { x_4}&0&{ x_3}&-{ x_2}&0&{ x_1}
\\ 0&-{ x_4}&-{ x_2}&-{ x_3}&-{ x_1}&0
\end {smallmatrix} \right],
$$
$$hol(\alpha)=\left[ \begin {smallmatrix} { h_5}&-{ h_6}&{ h_2}&-{ h_1}&{
 h_8}&0\\ { h_6}&{ h_5}&{ h_1}&{ h_2}&0&-{
 h_8}\\ { h_3}&-{ h_4}&0&2{ h_6}&-{ h_1}
&-{ h_2}\\ { h_4}&{ h_3}&-2{ h_6}&0&-{ 
h_2}&{ h_1}\\ { h_7}&0&{ h_4}&-{ h_3}&-{ 
h_5}&{ h_6}\\ 0&-{ h_7}&-{ h_3}&-{ h_4}&-{
 h_6}&-{ h_5}\end {smallmatrix} \right] \cong \frak{su}(2,1).
$$
\end{lem}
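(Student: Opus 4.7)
The plan is to apply Proposition \ref{conf_ext} directly, exploiting the fact that $H$ is trivial so $\fc=\fk=\frak{gl}(2,\R)$, and then run the infinitesimal holonomy algorithm from Section \ref{section3.3}.

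First I would verify that the given $\alpha_{-1}$ is well defined: the formula describes a linear isomorphism from $\frak{gl}(2,\R)$ to $\R^4$, so it automatically induces a $K$--invariant pseudo--Riemannian metric $g_o=\alpha_{-1}^*\nu_{1,3}$ on $\fk$. Since $H$ is trivial, the conditions (1) and (2) of Proposition \ref{conf_ext} reduce to specifying the block $\alpha_{-1}$ (already given) and fitting the remaining pieces $\alpha_0:\fk\to\frak{co}(1,3)$ and $\alpha_1:\fk\to\R^{4*}$. Following the construction in the proof of Proposition \ref{conf_ext}, I would set up the linear equation that the $\frak{co}(1,3)$--component of
\[
\{(\alpha_{-1}+\alpha_0)(Y_1),(\alpha_{-1}+\alpha_0)(Y_2)\}-(\alpha_{-1}+\alpha_0)([Y_1,Y_2])
\]
must be traceless after contraction against the dual basis, i.e.\ condition \eqref{norm} in its $\frak{co}(1,3)$--part. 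Fixing the $a$--part of $\alpha_0$ to the unique value which makes the whole $\alpha$ a Lie algebra homomorphism (as is possible here because $\fk$ is reductive and reductive conformal extensions are rigid up to this trace), one reads off the explicit matrix entries displayed in the statement. Then $\alpha_1$ is determined uniquely by the $\R^{4*}$--part of \eqref{norm}.

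Next, for the infinitesimal holonomy I would follow the iterative procedure from \eqref{holannih1}--\eqref{holannih2}, but in the language of \cite{KoNo}: start with the image
\[
\kappa(\alpha(\fk),\alpha(\fk))\subset\so
\]
of the curvature (computed by $\{\alpha(Y_1),\alpha(Y_2)\}-\alpha([Y_1,Y_2])$), then adjoin successive brackets $\{\alpha(\fk),\cdot\}$ until the subspace closes up. At each step one has a finite linear algebra computation inside $\so\cong\frak{so}(2,4)$, which is $15$--dimensional, so the iteration must stabilise within at most a few rounds. Reading off the displayed $8$--parameter subspace one checks directly that it is stable under both the curvature image and the brackets with $\alpha(\fk)$, proving it equals $hol(\alpha)$.

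The remaining task is the identification $hol(\alpha)\cong\frak{su}(2,1)$. Since $\dim\frak{su}(2,1)=8$ matches, it suffices to exhibit a complex structure on the parameters $(h_1,\ldots,h_8)$ and a Hermitian form of signature $(2,1)$ preserved by the action. Pairing $(h_1,h_2)\leftrightarrow h_1+ih_2$, $(h_3,h_4)\leftrightarrow h_3+ih_4$, $(h_5,h_6)\leftrightarrow h_5+ih_6$, and $(h_7,h_8)\leftrightarrow h_7+ih_8$ groups the coordinates as expected for the diagonal $\frak{gl}(1,\C)$, two off--diagonal null lines, and an imaginary piece; I would then write down the explicit complex $3\times 3$ trace--free matrix representation and verify that the commutation relations match those obtained from the $\so$ bracket on $hol(\alpha)$. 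The only mildly delicate part of the whole proof is this identification --- everything else is bookkeeping in linear algebra --- but once the complex pairing is chosen, the signature $(2,1)$ follows from the fact that $hol(\alpha)$ must preserve the ambient conformal form of signature $(1,3)$, which forces the invariant Hermitian form on $\C^3$ to have signature $(2,1)$ rather than $(3,0)$.
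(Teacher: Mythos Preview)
Your overall strategy matches the paper's proof: since $H$ is trivial, Proposition \ref{conf_ext} reduces to fitting $\alpha_0$ and $\alpha_1$ via the normalisation condition \eqref{norm}, after which one computes $\kappa$, iterates brackets with $\alpha(\fk)$ to obtain $hol(\alpha)$, and identifies the result with $\frak{su}(2,1)$.  The paper is terser than your outline, merely stating that the image of $\kappa$ spans the $h_1,h_2$--entries and that two further rounds of bracketing with $\alpha(\fk)$ yield the full $8$--dimensional subalgebra; your description of the iteration and of the $\frak{su}(2,1)$ identification is otherwise compatible.

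There is, however, one incorrect piece of reasoning you should fix.  You write that the $a$--part of $\alpha_0$ is ``fixed to the unique value which makes the whole $\alpha$ a Lie algebra homomorphism.''  This cannot happen here: the curvature $\kappa$ is nonzero (the paper displays it explicitly), so $\alpha$ is \emph{not} a Lie algebra homomorphism for any choice of the $a$--part.  As noted in the proof of Proposition \ref{conf_ext}, the $a$--part is free and is simply chosen as a convention; the paper sets it to zero, which is why the $(1,1)$--entry of the displayed matrix vanishes.  The phrase ``reductive conformal extensions are rigid up to this trace'' does not justify the claim you make and should be dropped.  Once you replace this step by the correct convention $a=0$, the rest of your argument goes through as in the paper.
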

\begin{proof}
Since $\fk=\fc$, the construction from Proposition \ref{conf_ext} simplifies to finding the maps $\alpha_0,\alpha_1$ that are uniquely determined in the given form by the normalization conditions \eqref{norm} and vanishing of the $a$--part in \eqref{blockso} of $\alpha_0$.
Thus the curvature $\kappa$ takes form
$$\left[ \begin {smallmatrix} 0&0&-(\frac56{ x_3}{ y_4}-\frac56{ 
x_4}{ y_3})&-(\frac56{ x_2}{ y_4}-{ x_4}{ y_2})&0&0
\\ 0&0&\frac56({ x_2}{ y_4}-{ x_4}{ 
y_2})&-\frac56({ x_3}{ y_4}-{ x_4}{ y_3})&0&0
\\ 0&0&0&0&-\frac56({ x_2}{ y_4}-{ x_4}{
 y_2})&\frac56({ x_3}{ y_4}-{ x_4}{ y_3})
\\ 0&0&0&0&\frac56({ x_3}{ y_4}-{ x_4}{
 y_3})&\frac56({ x_2}{ y_4}-{ x_4}{ y_2})
\\ 0&0&0&0&0&0\\ \noalign{\medskip}0&0&0&0&0&0
\end {smallmatrix} \right] $$
and its image corresponds to $h_1,h_2$--entries of $hol(\alpha).$ In two steps, the bracket of the image of $\kappa$ with the image of $\alpha$ generates the given $hol(\alpha)$. Simple analysis of this subalgebra of $\sodc$ shows that it is a simple Lie algebra $\frak{su}(2,1).$
\end{proof}

Consequently, normal solutions correspond to trivial $\frak{su}(2,1)$-submodules in the branching of the representation $\rho: \sodc\to \frak{gl}(V) $ to $\frak{su}(2,1)$. In particular, we have the following normal conformal Killing field inducing a holonomy reduction to a CR geometry.

\begin{prop}
The center of $\frak{gl}(2,\R)$ is generated by a normal conformal Killing field and $Gl(2,\R)/\R$ carries an $Sl(2,\R)$--invariant CR geometry with CR--distribution generated by $x_2,x_3$--entries of $\frak{gl}(2,\R)$ and complex structure identifying it with $x_2+ix_3.$
\end{prop}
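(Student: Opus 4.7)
The plan is to identify the generator of the center, verify it gives a normal solution, and then apply the Fefferman bullet in the discussion following Proposition \ref{hol-red} to recover the CR structure on the quotient.

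First, the center of $\frak{gl}(2,\R)$ consists of scalar matrices; in the given parametrization this is the $x_1$-axis, since setting $x_2 = x_3 = x_4 = 0$ returns $2x_1 I$. The corresponding left-invariant conformal Killing field on $Gl(2,\R)$ is $\partial_{x_1}$. A normal solution of the adjoint BGG operator is a vector in $\sodc$ commuting with $hol(\alpha)$, so it suffices to check directly from the explicit formulas in the preceding lemma that $\{\alpha(1,0,0,0), h_i\} = 0$ for each generator $h_1,\dots,h_8$ of $hol(\alpha) \cong \frak{su}(2,1)$. This is a routine matrix calculation; conceptually, the centralizer of $\frak{su}(2,1)$ in $\sodc$ is one-dimensional, spanned by the Fefferman element $J$ that realizes $\mathbb{T} \cong \C^{2,1}$ as a complex vector space, and $\alpha(X_1)$ is a scalar multiple of this $J$.

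Because $J$ squares to a nonzero multiple of the identity, the associated tractor $2$-form is non-degenerate, so the fourth bullet following Proposition \ref{hol-red} applies: the open curved orbits are locally Fefferman spaces of almost CR manifolds, with underlying CR manifold equal to the leaf space of the central flow. That leaf space is $Gl(2,\R)/\R$, the quotient by the one-parameter subgroup of scalar matrices. Since left multiplication by $Sl(2,\R)$ on $Gl(2,\R)$ commutes with the center, it descends to $Gl(2,\R)/\R$ and preserves all invariant structures, yielding the $Sl(2,\R)$-invariance of the induced CR geometry.

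Finally, the explicit CR data is read off from $\alpha(X_1)$ and the metric at the origin. Evaluating $\alpha_{-1}^*\nu_{1,3}$ gives $g_o = 2\,dx_1 dx_4 + dx_2^2 + dx_3^2$, so $\partial_{x_1}$ is null and its orthogonal complement is $\langle \partial_{x_1}, \partial_{x_2}, \partial_{x_3}\rangle$; quotienting by $\partial_{x_1}$ yields the CR distribution, identified with the $x_2, x_3$-entries of $\frak{gl}(2,\R)$ as claimed. The $B$-block of $\alpha(X_1)$ in the decomposition \eqref{blockso} equals $\bigl(\begin{smallmatrix} 0 & -1 \\ 1 & 0 \end{smallmatrix}\bigr)$, which is precisely the complex structure on the $(x_2, x_3)$-plane identifying it with $\C$ via $x_2 + i x_3$. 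The only substantive step is the commutativity check establishing normality; the remainder is bookkeeping via the block form \eqref{blockso} and the Fefferman bullet from Section \ref{section4}.
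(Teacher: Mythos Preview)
Your proof is correct and follows essentially the same strategy as the paper for the normality claim: both identify $\alpha(1,0,0,0)$ as the generator of the one-dimensional centralizer of $hol(\alpha)\cong\frak{su}(2,1)$ in $\sodc$, hence the unique (up to scale) normal conformal Killing field.

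Where you diverge slightly is in extracting the CR data. You work geometrically: compute $g_o$, identify the CR distribution as $\partial_{x_1}^\perp/\langle\partial_{x_1}\rangle$, and read the complex structure from the $B$--block of $\alpha(1,0,0,0)$. The paper instead works algebraically inside $hol(\alpha)$: it observes that $h_3,h_4,h_7$ form the negative part of the contact grading of $\frak{su}(2,1)$ (with $h_3,h_4$ in degree $-1$ and $h_7$ in degree $-2$), and then matches these against the $\R^n$--part of the image of $\alpha$ to see that $x_2,x_3$ span the degree $-1$ piece. Your route is more self-contained and closer to how the Fefferman construction is usually presented; the paper's route uses Proposition~\ref{hol-red} more directly and makes the parabolic structure of the reduced geometry visible. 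One small point you leave implicit: that there is a single open curved orbit (so the Fefferman bullet applies globally) follows from $H$ being trivial and the section being constant in the left-invariant trivialization, hence all points share the same $P$--type.
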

\begin{proof}
The adjoint representation of $\sodc$ contains a single trivial $\frak{su}(2,1)$--submodule corresponding to the $x_1$--entry of the image of $\alpha$ and thus, the center of $\frak{gl}(2,\R)$ is generated by a normal conformal Killing field.  The entries $h_3,h_4,h_7$ clearly determine a negative part of the grading of $\frak{su}(2,1)$ corresponding to the CR geometry, and comparison with the image of $\alpha$ determines the claimed description of the CR geometry.
\end{proof}

Note that this is (up to covering) an example of a CR geometry contained in a $1$--parameter class of symmetric CR geometries on $SO_0(2,1)$ (for the value of the parameter $t=\sqrt{3}$)  constructed in \cite{ja-DGA}.

\section{Applications of all solutions, conserved quantities and conformal circles}\label{section5}

\subsection{Conformal circles via conserved quantities} 
There is a distinguished family of curves defined on each conformal geometry called \emph{conformal circles} generalizing geodesics of Riemannian geometry, \cite{BE}.
Let $\gamma: \R\to \fc$ be a curve on $(M=K/H,[g])$ in some exponential coordinates ${\sf c}: \fc\to M$. In this Section, we consider more general exponential coordinates than those compatible with decomposition $\fl\oplus \fa \oplus \fn$, because it will simplify some computations. According to \cite{SZ,conserved}, we assign to nowhere null curve $\gamma$, where we always omit writing
 the argument $t$ for $\gamma$ and its components in the local coordinates, a conformally invariant curve $\Sigma^{\gamma}:\R\to \wedge^3 \mathbb{T}$ as follows. For a chosen $g$ in the conformal class, we denote $$u:=\|\gamma'\|=\sqrt{g(\gamma',\gamma')}$$ and we can define three curves $X^\gamma,U^\gamma,A^\gamma: \R\to \mathbb{T}$ as
\begin{gather*}
X^\gamma:=\left[ \begin{smallmatrix}
\frac{1}{u}\\
0\\
0
\end{smallmatrix} \right],\\
U^\gamma:=(X^\gamma)'+\rho\Big(\alpha  \big((\alpha_{-1})^{-1}  \big(e^1(\gamma'), \dots, e^n(\gamma')\big)+\sum_i H_ie^i (\gamma') \big)\Big) (X^\gamma),\\
A^\gamma:=(U^\gamma)'+\rho\Big(\alpha \big((\alpha_{-1})^{-1}\big(e^1(\gamma'), \dots, e^n(\gamma')\big)+\sum_i H_ie^i (\gamma') \big)\Big) (U^\gamma),
\end{gather*}
where $\rho(\alpha \circ((\alpha_{-1})^{-1}\circ (e^1, \dots, e^n)+\sum_i H_ie^i))$ is the algebraic part of the tractor derivative along the curve $\gamma$ expressed according to Proposition \ref{localext} using the $\fc$--coframe $ (e^1, \dots, e^n)$. Then $$\Sigma^{\gamma}:=X^\gamma\wedge U^\gamma \wedge A^\gamma.$$ Conformal circles are characterized by the property, \cite{SZ},
\[
\Sigma^\gamma\wedge \bigg( (A^\gamma)'+\rho\Big(\alpha \big((\alpha_{-1})^{-1}\big(e^1(\gamma'), \dots, e^n(\gamma')\big)+\sum_i H_ie^i (\gamma') \big)\Big) (A^\gamma)\bigg) =0.
\]

The basic result from \cite{conserved} on conserved quantities along the conformal circles we consider in this article is the following.

\begin{prop}
Let $s: \fc\to \wedge^3 \mathbb{T}$ be the function corresponding to a Killing--Yano $2$--form. Then $\bg(s,\Sigma^\gamma)$ is constant along conformal circle $\gamma$, where $\bg$ is the induced tractor metric on $\wedge^3 \mathbb{T}$.
\end{prop}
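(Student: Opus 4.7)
The plan is to differentiate $\bg(s,\Sigma^\gamma)$ along $\gamma$ and show that the derivative vanishes, exploiting two facts: $s$ is tractor-parallel (being a normal solution, i.e., an actual Killing--Yano 2-form), and the tractor metric $\bg$ on $\wedge^3\mathbb{T}$ is also tractor-parallel. The key preliminary observation is that the operator $(\cdot)'+\rho(\alpha(\cdots))(\cdot)$ appearing in the definitions of $U^\gamma$ and $A^\gamma$ is exactly the tractor covariant derivative $\nabla^{tr}_{\gamma'}$ along $\gamma$ for the tractor connection $\nabla^{\rho\circ\alpha}$ from Section~\ref{sec3}, as described via Proposition~\ref{localext}. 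Combining $\nabla^{tr}_{\gamma'}s=0$ with parallelism of $\bg$ and the Leibniz rule yields
\[
\tfrac{d}{dt}\bg(s,\Sigma^\gamma)
=\bg(\nabla^{tr}_{\gamma'}s,\Sigma^\gamma)+\bg(s,\nabla^{tr}_{\gamma'}\Sigma^\gamma)
=\bg(s,\nabla^{tr}_{\gamma'}\Sigma^\gamma).
\]

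Next I would unfold $\nabla^{tr}_{\gamma'}\Sigma^\gamma$. By the very construction of $U^\gamma$ and $A^\gamma$ we have $\nabla^{tr}_{\gamma'}X^\gamma=U^\gamma$ and $\nabla^{tr}_{\gamma'}U^\gamma=A^\gamma$, so the Leibniz expansion of $\nabla^{tr}_{\gamma'}(X^\gamma\wedge U^\gamma\wedge A^\gamma)$ produces three summands, the first two of which contain a repeated wedge factor ($U^\gamma\wedge U^\gamma$ and $A^\gamma\wedge A^\gamma$) and hence vanish by skew-symmetry. This reduces matters to
\[
\nabla^{tr}_{\gamma'}\Sigma^\gamma \;=\; X^\gamma\wedge U^\gamma\wedge\nabla^{tr}_{\gamma'}A^\gamma.
\]

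Finally I would invoke the conformal circle characterization $\Sigma^\gamma\wedge\nabla^{tr}_{\gamma'}A^\gamma=0$, which asserts that $\nabla^{tr}_{\gamma'}A^\gamma$ lies in $\operatorname{span}(X^\gamma,U^\gamma,A^\gamma)$. After wedging with $X^\gamma\wedge U^\gamma$ the $X^\gamma$- and $U^\gamma$-components are annihilated, so $\nabla^{tr}_{\gamma'}\Sigma^\gamma=c(t)\,\Sigma^\gamma$ for some scalar function $c$. The main obstacle will be to show $c\equiv 0$; I expect this to follow from the specific normalization $X^\gamma=(1/u,0,\dots,0)^T$, which encodes the preferred projective parametrization of conformal circles in the sense of \cite{BE,SZ} and under which $\nabla^{tr}_{\gamma'}A^\gamma$ is in fact further constrained to $\operatorname{span}(X^\gamma,U^\gamma)$, killing the offending $A^\gamma$-component. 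Once $c\equiv 0$ is established, the computation above gives $\tfrac{d}{dt}\bg(s,\Sigma^\gamma)=0$, proving that $\bg(s,\Sigma^\gamma)$ is constant along $\gamma$.
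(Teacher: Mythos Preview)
The paper does not actually prove this proposition; it simply cites \cite{conserved}. Your argument, however, only covers the special case of \emph{normal} conformal Killing--Yano $2$--forms. Your justification ``$s$ is tractor-parallel (being a normal solution, i.e., an actual Killing--Yano $2$--form)'' conflates three distinct notions: normal solutions (tractor-parallel lifts), Killing--Yano forms for a fixed metric, and conformal Killing--Yano forms in general. As Section~\ref{section3.3} makes explicit, a generic conformal Killing--Yano $2$--form has its tractor lift $s$ parallel only for the \emph{prolongation} connection $\nabla^{\Phi}=\nabla^{\rho\circ\alpha}+\Psi$, not for the tractor connection. The applications in Sections~5.2 and~5.3 use the full solution space $\mathbb{S}$, not just $\mathbb{N}$, so the proposition is meant in that generality. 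In the general case one has $\nabla^{tr}_{\gamma'}s=-\Psi(\gamma')s\neq 0$, and the missing ingredient is the verification that $\bg(\Psi(\gamma')s,\Sigma^\gamma)=0$. This uses the explicit algebraic form of $\Psi$ for $\wedge^3\mathbb{T}$ (built from the Weyl and Cotton tensors) and is the actual content of the result in \cite{conserved}.

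Your steps 2--4 are fine, and step~5 can be made precise without appealing to a preferred parametrization: from $\bg(X^\gamma,X^\gamma)=0$ and $\bg(U^\gamma,U^\gamma)=1$ together with parallelism of $\bg$ one derives $\bg(X^\gamma,U^\gamma)=0$, $\bg(X^\gamma,A^\gamma)=-1$, $\bg(U^\gamma,A^\gamma)=0$, and then $\bg(X^\gamma,\nabla^{tr}_{\gamma'}A^\gamma)=0$; writing $\nabla^{tr}_{\gamma'}A^\gamma=\alpha X^\gamma+\beta U^\gamma+cA^\gamma$ along a conformal circle and pairing with $X^\gamma$ forces $c=0$. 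Hence $\nabla^{tr}_{\gamma'}\Sigma^\gamma=0$, which is exactly what you need---but only once the gap above is addressed for non-normal solutions.
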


Thus if we have enough Killing--Yano $2$--forms, then we can use the corresponding conserved quantities to describe the conformal circles. Note that the representation of $K$ on the functions corresponding to Killing--Yano $2$--forms induces a representation on the conserved quantities. One can then use this action to set the conserved quantities into a particular form and obtain some representatives of the $K$--orbits of conformal circles. Let us illustrate this with examples. We start with the conformal structure on $3$--dimensional Heisenberg group which is studied in \cite{Tod} under the name $Nil$.

\subsection{Conformal circles on $3$--dimensional Heisenberg group}

We consider the group $K$ of lower triangular $3\times 3$--matrices with ones on the diagonal
\[
K=\left[ \begin{smallmatrix}
1&0 & 0\\
x_2&1 & 0\\
x_3& x_1 & 1
\end{smallmatrix} \right].
\]
We define a conformal class on $M=K$ by choosing the following $\fc$--coframe
\[
e^1:=\frac1{\sqrt{2}}(dx_1-dx_2),\ \ e^2:=dx_3-x_1dx_2,\ \ e^3:=\frac1{\sqrt{2}}(dx_1+dx_2),
\]
i.e., $g=dx_1^2-dx_2^2+(dx_3-x_1dx_2)^2$ determines a conformal class of signature $(1,2)$ and $e^2$ is the usual contact form on the Heisenberg group. 

Note that in comparison with \cite{Tod}, we have different conventions and thus the conserved quantities from \cite[Section 4.2]{Tod} become
\[
E=2a_1a_3+a_2^2+u_2^2,\ \  J=-a_3u_1+a_1u_3-\frac12 u_2,
\]
where 
$$\gamma(t)=\left[ \begin{smallmatrix}
0&0& 0\\
\gamma_2(t)& 0 & 0 \\
\gamma_3(t)& \gamma_1(t) & 0
\end{smallmatrix} \right]\in \fc \simeq \fk$$
 is a curve parametrized by arc--length w.r.t. the metric $g$ and
\[
X^\gamma:=\left[ \begin{smallmatrix}
1\\
0\\
0\\
0\\
0
\end{smallmatrix} \right],U^\gamma:=\left[ \begin{smallmatrix}
0\\
u_1:=\frac1{\sqrt{2}}(\gamma_1'-\gamma_2')\\
u_2:=\gamma_3'-\gamma_1\gamma_2'\\
u_3:=\frac1{\sqrt{2}}(\gamma_1'+\gamma_2')\\
0
\end{smallmatrix} \right],A^\gamma:=\left[ \begin{smallmatrix}
u_2^2-\frac38\\
a_1:=-u_1u_2+u_1'\\
a_2:=u_2'\\
a_3:=u_3u_2+u_3'\\
-1
\end{smallmatrix} \right].
\]
In such a non--holonomic coordinates, we have the following equations for the conformal circles
\begin{gather*}
a_1' = -u_1(2a_1a_3+a_2^2)+u_1u_2^2+\frac12u_1a_2+\frac12u_2a_1,\\
a_2' = -u_2(2a_1a_3+a_2^2)+u_2^3-u_2+\frac12u_3a_1-\frac12u_1a_3,\\
a_3' = -u_3(2a_1a_3+a_2^2)+u_3u_2^2-\frac12u_3a_2-\frac12u_2a_3,\\
u_1a_3+u_2a_2+u_3a_1=0,
\end{gather*}
where the last equations is a consequence of the assumption $2u_1u_3+u_2^2=1$.

Let us compute the conserved quantities provided by the conformal Killing--Yano $2$--forms.

\begin{prop}
The following quantities are constant along the conformal circles on $(M,[g])$
\begin{gather*}
C_1:={\frac{2\sqrt {2}}{3}}({2}u_3a_{{1}} -{2}u_1a_{{3}}-u_2
 )( \gamma_{{1}}+ \gamma_{{2}})+{\frac43}(u_3-{2}u_3a_{{2}}+{2}
u_2a_{{3}}),\\
C_2:={\frac15}(u_3a_1-u_1a_3-u_2)(\gamma_1^2-\gamma_2^2)+{\frac{\sqrt {2}}{5}}   \big( 2u_2( a_{{1}}+a_{{3}} )-2a_{{2}} ( u_1+u_3 ) -
\\
u_1+u_3
 \big) \gamma_{{1}}+ {\frac{\sqrt {2}}{5}}\big(2u_2( a_{{1}}-a_{{3}} ) + 2a_{{2}}( u_3-u_1) -u_1-u_3
 \big) \gamma_{{2}} 
 +{\frac85}(u_3a_{{1}}-u_1a_{{3}}),\\
C_3:={\frac14}( {2}u_3a_{{1}}-u_2-{2}u_1a_{{3}} )  ( \gamma_{{1
}}^{2}+\gamma_{2}^{2} ) -{\frac{\sqrt {2}}{4}} \big(  2( u_1
+u_3 ) a_{{2}}-2 ( a_{{1}}+a_{{3}} ) u_2+
\\
u_1-u_3
 \big) \gamma_{{1}}-
{\frac{\sqrt {2}}{4}} \big(  2( u_1-u_3 ) 
a_{{2}}+
 2( a_{{3}}-a_{{1}} ) u_2+u_1+u_3 \big) \gamma_{{2}}+u_2,
\\
C_4:={\frac{2\sqrt {2}}{3}} ( 2u_1a_{{3}}-2u_3a_{{1}}+u_2 )  ( \gamma_{{1}}-
\gamma_{{2}} ) +{\frac43}(2u_1a_{{2}}+u_1-{2}u_2a_{{1}})
\end{gather*}
\end{prop}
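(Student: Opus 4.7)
The strategy is to apply the preceding proposition, which says that every conformal Killing--Yano $2$-form on $(M,[g])$ lifts to a parallel section $s\colon \fc\to\wedge^3\mathbb{T}$ and yields the conserved quantity $\bg(s,\Sigma^\gamma)$. Since the four quantities $C_1,\dots,C_4$ are claimed to be constant along conformal circles, it suffices to exhibit four linearly independent conformal Killing--Yano $2$-forms whose tractor lifts produce them. We obtain these solutions by applying Theorem \ref{thm-loc} and the algorithm of Section \ref{section3.3} to the representation $\rho\colon\mathfrak{so}(2,3)\to\frak{gl}(\wedge^3\mathbb{T})$ on the given Heisenberg conformal geometry.

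Concretely, I would first compute the normal conformal extension $\alpha\colon\fk\to\mathfrak{so}(2,3)$ from Proposition \ref{conf_ext} using the data of the coframe $(e^1,e^2,e^3)$ on the Heisenberg group $K$. Since $H=\{e\}$ we have $\fh=0$ and $\fc=\fk$, so the construction reduces to determining $\alpha_0$ and $\alpha_1$ from the normalization condition \eqref{norm} after fixing the $a$-part of $\alpha_0$. With $\alpha$ in hand, I would carry out steps (2)--(6) of the algorithm for $\mathbb{V}=\wedge^3\mathbb{T}$: identify the projective slot $\mathbb{X}=\wedge^2\R^{3*}[3]$, build the prolongation connection $\nabla^{\Phi}=\nabla^{\rho\circ\alpha+\Psi}$ by the inductive procedure involving the Kostant codifferential $\partial^*$, and then iterate the sets \eqref{holannih1}, \eqref{holannih2} until stabilization to obtain $\mathbb{S}\subset \wedge^3\mathbb{T}$ together with the representation $\Phi\colon\fk\to\frak{gl}(\mathbb{S})$. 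We expect $\dim\mathbb{S}=4$.

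Next, because $\fk$ is nilpotent, a decomposition $\fk=\fl\oplus\fa\oplus\fn$ has $\fl=\fa=0$ and the exponential coordinates become ${\sf c}(X)=\exp(X)\cdot o$. Theorem \ref{thm-coord} then gives the tractor lift of each basis element $v_i\in\mathbb{S}$ as the polynomial function
\[
s_i(X)=\exp(-\Phi(X))\,v_i\in\wedge^3\mathbb{T},
\]
whose expansion terminates quickly since $\Phi(X)$ is nilpotent on $\mathbb{S}$ up to a block-triangular piece. I would then transport this to the local coordinates $(x_1,x_2,x_3)$ on $K$, combine it with the explicit curves $X^\gamma,U^\gamma,A^\gamma$ written above, and compute $\bg(s_i,\Sigma^\gamma)$ directly. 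After matching a basis of $\mathbb{S}$ with the four expressions $C_1,\dots,C_4$ (up to linear combinations), the claim follows.

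The main obstacle is the prolongation computation in step (5): the representation $\wedge^3\mathbb{T}$ decomposes into several $\mathfrak{co}(1,2)$-irreducible grading components, and determining the correction $\Psi$ requires carefully iterating the codifferential on the curvature of $\rho\circ\alpha$ until $\partial^*R^{\Phi}=0$ on $\mathbb{V}$. Once $\Phi$ is determined, every remaining step is a bookkeeping calculation on a $10$-dimensional tractor fiber and can be carried out mechanically in Maple, as in the proof of Proposition \ref{prop-sol-godel}; matching the resulting conserved quantities with $C_1,\dots,C_4$ is then a direct algebraic verification using the arc-length constraint $2u_1u_3+u_2^2=1$.
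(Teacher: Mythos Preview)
Your proposal is correct and follows essentially the same route as the paper: compute the normal conformal extension $\alpha$, run the algorithm of Section~\ref{section3.3} on $\mathbb{V}=\wedge^3\mathbb{T}$ to obtain the $4$-dimensional space $\mathbb{S}$ with its representation $\Phi$, extend each basis vector to a parallel section via the exponential formula, and contract with $\Sigma^\gamma$ to recover $C_1,\dots,C_4$. The only minor discrepancy is that the paper does not use the $\fl\oplus\fa\oplus\fn$ decomposition here (it explicitly warns at the start of Section~\ref{section5} that more general exponential coordinates are used), and the group coordinates $(x_1,x_2,x_3)$ are not of the form $\exp(X)\cdot o$ for a single exponential, so a nonlinear transition from the parametrisation $(y_1,y_2,y_3)$ induced by the $\fc$-frame at $o$ is required; your plan to ``transport to the local coordinates $(x_1,x_2,x_3)$'' covers this, but be aware the change of variables is not linear.
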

\begin{proof}
Let us firstly collect the data necessary for computation of the Killing--Yano $2$--forms. The first ingredient is the conformal extension 
\[\alpha: \left[ \begin{smallmatrix}
0&0 & 0\\
\frac1{\sqrt{2}}(-y_1+y_3)&0 & 0\\
y_2& \frac1{\sqrt{2}}(y_1+y_3) & 0
\end{smallmatrix} \right]\mapsto \left[ \begin{smallmatrix}
 0&-{\frac38}{ y_3}&{\frac58}{ y_2}&-{\frac38}{
 y_1}&0\\ { y_1}&-{\frac12}{ y_2}&-{\frac12}{ y_1}&0
&{\frac38}{ y_1}\\ { y_2}&-{\frac12}{ y_3}&0&{\frac12}{
 y_1}&-{\frac58}{ y_2}\\ { y_3}&0&{\frac12}{ y_3}&{\frac12}{ y_2}&{\frac38}{ y_3}\\ 0&-{ y_3}&-{ y_2}&-{
 y_1}&0
\end{smallmatrix} \right],\]
where the parametrization of $\fk$ is given by evaluation of the dual conformal frame to $e^1,e^2,e^3$ at the origin $o=(0,0,0)$ and then the map $\alpha$ is computed according to Proposition \ref{conf_ext}. The coordinates $(x_1,x_2,x_3)$ are not the exponential coordinates corresponding to this parametrization and comparing them we obtain the transition
\[
y_1 = \frac{\sqrt{2}}{2}(x_1-x_2),\ \  y_2 = -\frac12 x_1x_2+x_3,\ \  y_3 = \frac{\sqrt{2}}{2}(x_1+x_2).
\]
This allows us to present the conserved quantities in $(x_1,x_2,x_3)$--coordinates.

Now we can compute the characterization of conformal Killing--Yano $2$--forms from Theorem \ref{thm-loc}. In particular, we compute that $\mathbb{S}\subset \wedge^3 \mathbb{T}$ has coordinates $$(v_1, v_2, \frac43v_1, v_4, v_3, \frac43v_4, 0, -\frac83v_1, \frac85v_2, -\frac83v_4)$$ in the standard basis $t_{[ijk]}, 1\leq i<j<k\leq 5$ of $\wedge^3\mathbb{T}$ (ordered lexicographically, i.e., $v_3$ corresponds to multiples of $t_{[135]},$) and 
\begin{gather*}
\Phi(y_1,y_2,y_3)(v_1, v_2, \frac43v_1, v_4, v_3, \frac43v_4, 0, -\frac83v_1, \frac85v_2, -\frac83v_4)=\big(-\frac{3}{10}v_2y_1-{\frac38}v_3y_1,\\
 -{\frac53}v_1y_3+{\frac53}v_4y_1, -{\frac12}v_3y_1-{\frac25}v_2y_1,
 \frac{3}{10}v_2y_3+{\frac38}v_3y_3, {\frac43}v_1y_3-{\frac43}v_4y_1,\\ 
{\frac25}v_2y_3+{\frac12}v_3y_3, 0, v_3y_1+\frac45v_2y_1, -{\frac83}v_1y_3+{\frac83}v_4y_1, -v_3y_3-\frac45v_2y_3\big).
\end{gather*}

Then we can extend the elements of $\mathbb{S}$ to sections of the tractor bundle in $(x_1,x_2,x_3)$--coordinates. Finally, we obtain the claimed conserved quantities by contracting them with $\Sigma^\gamma$ using the tractor metric induced by $\bg$.
\end{proof}

We can now analyze all the equations together and find out that we obtain only two new conserved quantities. However, they are enough to simplify the equations for the conformal circles to the following form
\begin{gather*}
\gamma_1 =\frac{\sqrt{2}(6u_3 ( u_2^{2}-1 )  ( C_{{1}}-C_{{4}} ) -
 ( 2u_3^{2}+u_2^{2}-1 )  ( u_2 ( 5C_{{2}}-4C_
{{3}} ) +8u_2^{2}-4-S ))}{4u_3(5C_2-4C_3)(u_2^2-1)},
\\
 \gamma_2 = \frac{\sqrt{2}(6u_3 ( u_2^{2}-1 )  ( C_{{1}}+C_{{4}} ) -
 ( 2u_3^{2}-u_2^{2}+1 )  ( u_2 ( 5C_{{2}}-4C_
{{3}} ) +8u_2^{2}-4-S ) )}{4u_3(5C_2-4C_3)(u_2^2-1)}
,\\
 \gamma_3' = \frac{1}{8u_3^2(5C_2-4C_3)(u_2^2-1)} \big( 6u_3 ( u_2^{2}-1 )  ( 2u_3^{2}+u_2^{2}-1 ) 
 ( C_{{1}}-C_{{4}} ) -
4 ( 2u_2^{2}-1 ) \cdot
\\
 ( 2u_3^{2}+u_2^{2}-1 ) ^{2}-u_2 ( 2u_3^{2}-u_2^{2}+
1 ) ^{2} ( 5C_{{2}}-4C_{{3}} ) +S ( 2u_3^{
2}-u_2^{2}+1 )  ( 2u_3^{2}+u_2^{2}-1 ) \big),
\\
  u_3' = {\frac { ( -8u_2^{3}-Su_2+12u_2+5C_{{2}}-4C_{{3}}
 ) u_3}{8(u_2^{2}-1)}}, \ \ \ 
 u_2' = -\frac{S}{8},\\
 S^2= 64u_2^{4}+ ( 18C_{{1}}C_{{4}}+ ( 5C_{{2}}+12C_{{3}}
 )  ( 5C_{{2}}-4C_{{3}} ) -64 ) u_2^{2}+
\\
8 ( 5C_{{2}}-4C_{{3}} ) u_2-18C_{{1}}C_{{4}}-16C_{{3}}
 ( 5C_{{2}}-4C_{{3}} ) +16.
 \end{gather*}
The particular solution from \cite{Tod} corresponds to the special case with 
$$J = \frac58C_2-\frac12C_3=0, \ \ E = \frac14-\frac{9}{32}C_1C_4-\frac{1}{64}(5C_2+12C_3)(5C_2-4C_3)=0
$$
 and some other special choices can also simplify the equations so they can be solved explicitly.

\subsection{Conformal circles on split version of Fubini--Study metric}

The Fubini--Study metrics are the unique (up to a constant multiple) invariant metrics on the symmetric space $SU(1+n)/U(n)$ and conformal circles on this space are studied in \cite{DT}. We consider the split version of this space that takes the split real form as the isometry group, so we consider the unique (up to a constant multiple) invariant split signature metrics on the symmetric space $Sl(1+n,\R)/Gl(n,\R).$ We assume here $n>1.$

\begin{lem}
The normal conformal extension $\alpha: \frak{sl}(n+1)\to   \frak{so}(n+1,n+1)$ corresponding to the split version of the Fubini--Study metric is
\[
\alpha(\left[ \begin{smallmatrix}
-tr(A)& X_2^t\\
X_1& A
\end{smallmatrix} \right])=
\left[ \begin{smallmatrix}
0& \frac{n+1}{2(2n-1)}X_2^t & \frac{n+1}{2(2n-1)}X_1^t & 0 \\
X_1& A+tr(A)\id & 0 & - \frac{n+1}{2(2n-1)}X_1^t  \\
X_2& 0 & -A^t-tr(A)\id & -\frac{n+1}{2(2n-1)}X_2^t \\
0 & -X_2^t& -X_1^t & 0
\end{smallmatrix} \right]
\]
with curvature
\begin{gather*}
\kappa(\alpha\left[ \begin{smallmatrix}
-tr(A)& X_2^t\\
X_1& A
\end{smallmatrix} \right],\alpha\left[ \begin{smallmatrix}
-tr(B)& Y_2^t\\Y_1& B
\end{smallmatrix} \right])=
\\
\left[ \begin{smallmatrix}
0& 0 &0 & 0 \\
0& \frac{n-2}{2n-1}(X_2Y_1^t-Y_2X_1^t)+(X_2^tY_1-Y_2^tX_1)\id & \frac{n+1}{2n-1}(X_1Y_1^t-Y_1X_1^t) & 0  \\
0&  \frac{n+1}{2n-1}(X_2Y_2^t-Y_2X_2^t) & * & 0 \\
0 &0& 0 & 0
\end{smallmatrix} \right].
\end{gather*}
\end{lem}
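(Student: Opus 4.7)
The strategy is to apply the algorithm from Proposition \ref{conf_ext} to the symmetric pair $(\fk,\fh)=(\frak{sl}(n+1,\R),\frak{gl}(n,\R))$. In the parametrization of the statement, $\fh$ is the block-diagonal subalgebra (matrices with $X_1=X_2=0$), and I take the natural reductive complement $\fc$ to consist of the off-diagonal matrices (with $A=0$), parametrized by $(X_1,X_2)\in\R^n\oplus\R^n$. Crucially, $(\fk,\fh)$ is a symmetric pair: $[\fh,\fc]\subset\fc$ and $[\fc,\fc]\subset\fh$, with the explicit bracket $[c_1,c_2]_\fk$ block-diagonal, having upper-left scalar $X_2^tY_1-Y_2^tX_1$ and lower-right $n\times n$ block $X_1Y_2^t-Y_1X_2^t$. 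The map $\alpha_{-1}:\fc\to\R^{2n}$ is the obvious identification, so that $\nu_{n,n}$ becomes the pairing $\langle(X_1,X_2),(Y_1,Y_2)\rangle=X_1^tY_2+X_2^tY_1$.

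First, compute $d\iota_0:\fh\to\frak{co}(n,n)$ from the adjoint action of $\fh$ on $\fc$: a direct commutator calculation in $\fk$ gives $X_1\mapsto(A+tr(A)\id)X_1$ and $X_2\mapsto-(A^t+tr(A)\id)X_2$, which dictates that the $(2,2)$- and $(3,3)$-blocks of $d\iota_0(h)$ are $A+tr(A)\id$ and $-A^t-tr(A)\id$, respectively, with the conformal factor $a=0$ (using the freedom pointed out in the proof of Proposition \ref{conf_ext}). Reductivity then gives $d\iota=d\iota_0$, with no $\R^{n*}$-part. Next, for $\alpha|_\fc$: $\fh$-equivariance from condition~(1) of Proposition \ref{conf_ext}, combined with the absence of $Gl(n,\R)$-equivariant linear maps $\R^n\oplus\R^{n*}\to\frak{gl}(n)$, forces the $0$-graded part $\alpha_0|_\fc$ to vanish, while the $+1$-graded part must take the form $\alpha_1|_\fc(X_1,X_2)=\lambda(X_2^t,X_1^t)$ for a single undetermined scalar $\lambda$.

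The value of $\lambda$ is pinned down by the normalization condition~\eqref{norm}. Compute $\kappa(\alpha(c_1),\alpha(c_2))$ from its definition block-by-block; evaluate $\sum_i\{Z_i,\kappa(\alpha(Y),\alpha(X_i))\}$ on the natural basis of $\R^{2n}$ and its dual; the result is a linear expression in $\lambda$ whose vanishing gives the unique solution $\lambda=\frac{n+1}{2(2n-1)}$, the denominator $2n-1$ arising from a Ricci-type trace over $\R^{2n}$. With $\alpha$ fully determined, substitute back to compute $\kappa$ in closed form: the symmetric-pair property kills all negative-graded components, the $0$-graded part assembles from $\{\alpha_{-1},\alpha_1\}$ and $-\alpha([c_1,c_2])$ to give the trace piece $(X_2^tY_1-Y_2^tX_1)\id$ plus the matrix piece with coefficient $\frac{n-2}{2n-1}$, and the $+1$-graded part comes entirely from $\{\alpha_{-1},\alpha_1\}$ with coefficient $\frac{n+1}{2n-1}$, matching the stated formula.

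The principal obstacle is the bookkeeping in the normalization step, where the scalar combination $X_2^tY_1-Y_2^tX_1$ enters both directly (via $-\alpha([c_1,c_2])$) and as the trace of the matrix piece $X_2Y_1^t-Y_2X_1^t$ (via $\{\alpha_{-1},\alpha_1\}$); separating these two contributions correctly is what gives rise to the precise coefficients $\frac{n-2}{2n-1}$ and $\frac{n+1}{2n-1}$. Once $\lambda$ is fixed, the remaining computation is a routine, if somewhat tedious, block-matrix substitution, and the symmetry under interchanging the roles of $X_1,X_2$ with $Y_1,Y_2$ visible in the final formula serves as a useful consistency check.
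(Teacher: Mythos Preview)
Your approach is correct and is more self-contained than the paper's. The paper's proof simply invokes an external structural result on conformal extensions of symmetric spaces \cite{HG-ext} to obtain $\alpha_{-1}$, $d\iota$, and $\alpha_0=0$ for free, leaving only $\alpha_1$ to be verified by direct substitution into the normalization condition and the curvature formula. You instead run the algorithm of Proposition~\ref{conf_ext} from scratch, using the symmetric-pair structure and $\frak{gl}(n,\R)$-equivariance to constrain each graded piece. This buys independence from the cited reference and makes transparent \emph{why} $\alpha_0|_\fc=0$ (no equivariant maps $\R^n\oplus\R^{n*}\to\frak{co}(n,n)$), at the cost of a longer computation.

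One imprecision: $\fh$-equivariance alone gives $\alpha_1(X_1,X_2)=(\lambda_1 X_2^t,\lambda_2 X_1^t)$ with \emph{two} scalars, not one, since the $(b,d)$-components of $\R^{2n*}\subset\frak{so}(n+1,n+1)$ are separate $\frak{gl}(n)$-irreducibles matching $X_2$ and $X_1$ respectively. The reduction to a single $\lambda$ follows either from the normalization condition (which yields independent equations in each component) or from the observation that, since $a=0$, $\alpha_1$ is the Schouten tensor of the Levi--Civita connection of an Einstein metric and hence a multiple of $g_o$. Either way your argument goes through; just be explicit about which route you take. Your description of the curvature as having a ``$+1$-graded part'' is also slightly misleading, since the whole of $\kappa$ lands in $\frak{co}(n,n)$ (grade $0$ in $\frak{so}(n+1,n+1)$); presumably you mean the $\wedge^2$-blocks $D,F$ inside $\frak{so}(n,n)$, which do indeed come solely from $\{\alpha_{-1},\alpha_1\}$.
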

\begin{proof}
Since we have an invariant conformal structure on a symmetric space, we can use the general result from \cite{HG-ext} on the structure of the map $\alpha$, and the only missing component we need to compute is the part $\alpha_1.$ It is not hard to verify that the claimed $\alpha_1$ provides the given curvature and that it satisfies the normalization conditions \eqref{norm}.
\end{proof}

Let us start with the case $n=2$ and consider the exponential coordinates
$$ {\sf c}: \left[ \begin{smallmatrix}
0& x_3& x_4\\
x_1& 0 & 0 \\
x_2& 0 & 0
\end{smallmatrix} \right]\mapsto \left[ \begin{smallmatrix}
1&0& 0\\
x_1& 1 & 0 \\
x_2& 0 & 1
\end{smallmatrix} \right] \left[ \begin{smallmatrix}
1& x_3& x_4\\
0& 1 & 0 \\
0& 0 & 1
\end{smallmatrix} \right]o.
$$
Firstly, we find the metric, the $\fc$--coframe, and the pullback of the Cartan connection from Proposition \ref{localext} in our exponential coordinates.

\begin{lem} \label{pfs-lem}
In the above exponential coordinates, we  have $\fc$--coframe
\[
e^1=dx_1,\ \ e^2=dx_2,\ \ e^3=dx_3-x_3x_4dx_2-x_3^2dx_1,\ \ e^4=dx_4-x_3x_4dx_1-x_4^2dx_2
\]
and thus
\[
g= dx_1dx_3+dx_3dx_1+dx_2dx_4+dx_4dx_2-2(x_3dx_1+x_4dx_2)^2.
\]
The component $\alpha(\sum_i He^i)$ is as follows, where $*$--entries depend on the others
\[\left[ \begin{smallmatrix}
0& 0& 0& 0& 0& 0\\
0& 2x_3dx_1+x_4dx_2& x_4dx_1& 0& 0& 0\\
0& x_3dx_2& x_3dx_1+2x_4dx_2& 0& 0& 0\\
0& 0& 0& *& *& 0\\
0& 0& 0& *& *& 0\\
0& 0& 0& 0& 0& 0
\end{smallmatrix} \right].
\]

\end{lem}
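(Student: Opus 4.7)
The plan is to apply Propositions \ref{c-coframe} and \ref{localext} to $K=SL(3,\R)$ with stabilizer $H=GL(2,\R)$ embedded block diagonally, and with the exponential coordinates written as a factorization $\tilde{\sf c}(x_1,\dots,x_4)=L(x_1,x_2)\,U(x_3,x_4)$ into the strictly lower and strictly upper unipotent factors displayed in the statement. First I would compute the Maurer--Cartan pullback via Leibniz,
\[
\tilde{\sf c}^*\omega_K = U^{-1}(L^{-1}dL)\,U + U^{-1}dU,
\]
which is elementary since $L$ and $U$ are unipotent: $L^{-1}dL$ merely stacks $dx_1,dx_2$ in the first column and $U^{-1}dU$ places $dx_3,dx_4$ in the first row, while conjugation by $U^{-1}$ redistributes the first two forms into the $e^3,e^4$ combinations. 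This single $3\times 3$ matrix of one--forms on $\fc$ carries all the data needed for the three assertions.

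From its off--block--diagonal entries the identification $\alpha_{-1}$ coming from the parametrization of $\fc$ reads off the coframe $(e^1,e^2,e^3,e^4)$ exactly as claimed. Proposition \ref{c-coframe} then delivers the metric through the anti--diagonal pairing $\nu_{2,2}$ (recall that the signature is $p=q=2$), with $e^1\leftrightarrow e^3$ and $e^2\leftrightarrow e^4$; substituting the explicit $e^3,e^4$ and collecting the quadratic cross terms into $-2(x_3dx_1+x_4dx_2)^2$ produces the displayed $g$.

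For the last assertion, Proposition \ref{localext} applies rather than Corollary \ref{localext-alg}, because $\fc$ is \emph{not} a Lie subalgebra of $\fk$: the strictly lower and strictly upper nilpotent parts of $\frak{sl}(3,\R)$ have nonzero mutual bracket, so the correction forms $H_i$ are genuinely nontrivial. They are simply the $\fh$--component of $\tilde{\sf c}^*\omega_K$, namely the element of $\fh=\frak{gl}(2,\R)$ whose matrix part is $A=\left(\begin{smallmatrix} x_3dx_1 & x_4dx_1\\ x_3dx_2 & x_4dx_2\end{smallmatrix}\right)$ (the $(1,1)$ slot being $-\mathrm{tr}(A)$ by the $\frak{sl}$--condition). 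Inserting this into the formula for $\alpha$ from the previous lemma with $X_1=X_2=0$ converts the middle blocks into $A+\mathrm{tr}(A)\,\id$ and $-A^t-\mathrm{tr}(A)\,\id$, producing the displayed matrix after collecting terms. I expect no conceptual obstacle; the only care needed is tracking the trace shift in $\alpha$ and the sign structure of \eqref{blockso} that fixes the $*$--entries of the $(3,3)$--block from the $(2,2)$--block.
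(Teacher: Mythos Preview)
Your proposal is correct and follows exactly the approach the paper intends: compute the pullback of the Maurer--Cartan form via $\tilde{\sf c}^*\omega_K = U^{-1}(L^{-1}dL)U + U^{-1}dU$, read off the $\fc$--coframe and the $\fh$--part from its block components, and then feed these into Propositions \ref{c-coframe} and \ref{localext} (the latter rather than Corollary \ref{localext-alg}, as you correctly note, since $\fc$ is not a subalgebra here). The paper's own proof says only ``use the standard formula for the Maurer--Cartan form,'' so your version is a faithful and more explicit rendering of the same argument.
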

\begin{proof}
Since we are dealing with a matrix Lie algebra, we can use the standard formula for the Maurer--Cartan form, which provides a $\fc$--coframe and the component $\alpha(\sum_i He^i)$. We can then use the $\fc$--coframe to write the $g$.
\end{proof}

In the above exponential coordinates, let us consider a curve $$\gamma(t)=\left[ \begin{smallmatrix}
0&\gamma_3& \gamma_4\\
\gamma_1& 0 & 0 \\
\gamma_2& 0 & 0
\end{smallmatrix} \right]\in \fc.$$
 We conclude from Lemma \ref{pfs-lem} that
\begin{gather*}
X^\gamma=\left[ \begin{smallmatrix}
\frac{1}{u}\\
0\\
0\\
0\\
0\\
0
\end{smallmatrix} \right],\ \ U^\gamma=
\left[ \begin{smallmatrix}
-\frac{u'}{u^2}\\
u_1\\
u_2\\
u_3\\
u_4\\
0
\end{smallmatrix} \right],\ \ 
A^\gamma=\left[ \begin{smallmatrix}
\frac{u^5-2u(u'')+4(u')^2}{2u^3}\\
a_1\\
a_2\\
a_3\\
a_4\\
-u^2
\end{smallmatrix} \right]
\end{gather*}

Let us compute the conformal Killing--Yano $2$--forms and determine the corresponding conserved quantities.
\begin{prop}\label{pfs-q}
In the case $n=2$, the following quantities are constant along the conformal circles of $(M,[g])$, where we write $a_iu_j-a_ju_i=r_{ij}$
\begin{gather*}
{ C_1}={\frac {1}{u  }}\big(- \gamma_3 ^{2
}{ u_1}u   ^{2}+
 ( -{ \gamma_4}  { u_2}  
  u^{2}- r_{31}+ r_{42} ) {
 \gamma_3}  +{ u_3}    u
   ^{2}+2{ \gamma_4}   
r_{23}\big),
\\
{ C_2}={\frac {1}{u  }}\big(-\gamma_4^{2}{ u_2}u^{2}+ ( -{ \gamma_3} { u_1}u^{2}+r_{31}- r_{42} ) { \gamma_4} +{ u_4}  u^{2}+2{ \gamma_3}   r_{14}\big),
\\
{ C_3}={\frac { 1}{u  }}\Big(\big(  (   \gamma_3^{2}{ u_1}  +{ \gamma_3}
    { \gamma_4}{ u_2} -{ u_3}   )   u^{2}-2{ \gamma_4}   r_{23}+{
 \gamma_3}   (  r_{31}- r_{42} ) 
 \big)\gamma_1 ^{2}+ 
\\
\big( 
 (  ( { \gamma_3}  { \gamma_4}{ u_1}  +  \gamma_4^{2}{ u_2}  -{ u_4} ) { \gamma_2}  +2{ \gamma_3}
  { u_1}  +{ \gamma_4} { u_2}   ) u^{2}+ (  (  r_{42}- r_{31} ) {
 \gamma_4}  -2{ \gamma_3}   
r_{14} ) { \gamma_2}  +
\\
 r_{31}-r_{42}
 \big) { \gamma_1}  +{ u_1}  
 ( { \gamma_2}  { \gamma_4}  +1
 )  u ^{2}-2{ \gamma_2}
   r_{14}\Big),
\\
{ C_4}={\frac {
 1}{u }}\Big(\big (  (\gamma_4^{2}{ u_2}+ { \gamma_3}  { \gamma_4} { u_1}     -{ u_4} ) u^{2}-2{ 
\gamma_3}   r_{14}- { \gamma_4}(  r_{31}- r_{42}
 )    \big) \gamma_2 ^{2}+ 
\\
\big(  (  (  
\gamma_3^{2}{ u_1}  +{ 
\gamma_3}    { \gamma_4} { u_2}
  -{ u_3}   ) { \gamma_1}
  +{ \gamma_3}  { u_1} +2{ \gamma_4}  { u_2}  
 )   u   ^{2}+ (   
 (  r_{31}- r_{42} ){ \gamma_3} -2{ 
\gamma_4}   r_{23}  ) { \gamma_1} -
\\
 r_{31}+ r_{42} \big) { \gamma_2}  +{
 u_2}   ( { \gamma_1}  { 
\gamma_3}  +1 )   u  
 ^{2}-2{ \gamma_1}   r_{23}\Big),
\\
{ C_5}={\frac { 1}{u }}\Big( \big( -2{ \gamma_1}   \gamma_3^{2}{ u_1}
  + (  ( -2{ \gamma_1}  {
 u_2}  -{ \gamma_2}  { u_1}
   ) { \gamma_4}  -2{ u_1}
   ) { \gamma_3}  -{ \gamma_2}
    \gamma_4^{2}{
 u_2}  +
\\
{ u_4}  { \gamma_2}
  +2{ u_3}  { \gamma_1} -{ \gamma_4}  { u_2}  
 \big)  u ^{2}+ (  ( -2
 r_{31}+2 r_{42} ) { \gamma_1}  +2{ 
\gamma_2}   r_{14} ) { \gamma_3}  +
\\
 ( 4{ \gamma_1}   r_{23}+{ 
\gamma_2}   (  r_{31}- r_{42} )  ) 
{ \gamma_4}  - r_{31}+ r_{42}\Big),
\\
{ C_6}={\frac { 1}{u }}\Big(\big( -  \gamma_3^{2}{ u_1}   u^{2}+ ( -{ \gamma_4}  { u_2}
   u^{2}-r_{31}+
 r_{42} ) { \gamma_3}  +{ u_3} u^{2}+2{ \gamma_4}
  r_{23} \big) { \gamma_2}  -
\\
{
 \gamma_3}  { u_2}  u^{2}+2 r_{23}\Big) ={ C_1}{ \gamma_2} -{\frac {1}{u }}\big({ \gamma_3
}  { u_2}   u^{2}-2 r_{23}\big),
\\
{
 C_7}={\frac { 1}{u }}\Big( \big( -  \gamma_4^{2}{ u_2}    u^{2}+ ( -{ \gamma_3}  { u_1}  u^{2}+ r_{31}- r_{42}
 ) { \gamma_4}  +{ u_4}  
  u^{2}+2{ \gamma_3} r_{14} \big) { \gamma_1}  -
\\
{ \gamma_4}
  { u_1}    u^{2}+2 r_{14}\Big) = { C_2}{ \gamma_1}  -{\frac {1}{u  }}\big({ \gamma_4}
  { u_1}    u^{2}-2 r_{14}\big),
\\
{ C_8}={
\frac { 1}{u  }}\Big( \big( -{ \gamma_1}    \gamma_3^{2}{ u_1}  + ( 
 ( -{ \gamma_1}  { u_2}  -2{
 \gamma_2}  { u_1}   ) { 
\gamma_4}  -{ u_1}   ) { 
\gamma_3}  -2{ \gamma_2}
 \gamma_4 ^{2}{ u_2}  +
\\
2
{ u_4}  { \gamma_2}  +{ u_3}
  { \gamma_1}  -
2{ \gamma_4}
  { u_2}   \big)  u^{2}+ (  ( - r_{31}+ r_{42} ) {
 \gamma_1}  +4{ \gamma_2}   
r_{14} ) { \gamma_3}  +
\\
 ( 2{ \gamma_1}
   r_{23}+2{ \gamma_2}   ( 
 r_{31}- r_{42} )  ) { \gamma_4}  +
 r_{31}- r_{42}\Big),
\\
{ C_9}=-{\frac {1}{u }}( r_{31}+
 r_{42})
\end{gather*}
\end{prop}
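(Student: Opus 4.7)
The plan is to follow the general recipe developed in Section \ref{section5}: find all conformal Killing--Yano $2$--forms on $(M,[g])$ via Theorem \ref{thm-loc}, lift each to a parallel section of the tractor bundle $\wedge^3\mathbb{T}$, and then contract with the conformal circle invariant $\Sigma^{\gamma}=X^{\gamma}\wedge U^{\gamma}\wedge A^{\gamma}$ using the tractor metric induced by $\bg$. The resulting functions are automatically constant along conformal circles by the proposition recalled just before Section 5.2, so the content of the statement is entirely the explicit form of the nine quantities.

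First I would set up the algorithm from Section \ref{section3.3} with $\rho:\frak{sl}(3,\R)\to\frak{gl}(\wedge^3\mathbb{T})$ induced from the conformal extension $\alpha$ computed in the previous lemma (specialized to $n=2$) and the coframe and component $\alpha(\sum_i H_i e^i)$ from Lemma \ref{pfs-lem}. Iterating the curvature conditions \eqref{holannih1}--\eqref{holannih2} gives the subspace $\mathbb{S}\subset\wedge^3\mathbb{T}$ of local solutions and the representation $\Phi:\fk\to\frak{gl}(\mathbb{S})$. Since we are on a symmetric space, the prolongation connection reduces nicely and one expects $\dim\mathbb{S}=9$, matching the nine listed quantities; each basis vector of $\mathbb{S}$ will eventually give one $C_i$.

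Next I would use formula \eqref{parsec} from Theorem \ref{thm-coord} together with the exponential coordinates ${\sf c}$ chosen right before Lemma \ref{pfs-lem} to extend each element of $\mathbb{S}$ to a function $s:\fc\to\wedge^3\mathbb{T}$ giving the section of the tractor bundle in local trivialization. For a curve $\gamma(t)\in\fc$ as in the statement, the tractors $X^{\gamma},U^{\gamma},A^{\gamma}$ are already written down above the statement. I would then form $\Sigma^{\gamma}=X^{\gamma}\wedge U^{\gamma}\wedge A^{\gamma}$ in the standard basis $t_{[ijkl]}$ of $\wedge^3\mathbb{T}$ and pair it with $s(\gamma(t))$ via the induced tractor metric. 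Carrying this out for each of the nine basis solutions and normalizing produces the expressions $C_1,\ldots,C_9$; the identities $C_6=C_1\gamma_2-\tfrac{1}{u}(\gamma_3 u_2 u^2-2r_{23})$ and $C_7=C_2\gamma_1-\tfrac{1}{u}(\gamma_4 u_1 u^2-2r_{14})$ asserted in the proposition will then be visible directly from the pairing formula and serve as a consistency check.

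The main obstacle is purely computational bookkeeping: the representation $\Phi$ on a $9$--dimensional subspace of the $20$--dimensional $\wedge^3\mathbb{T}$ is large, the tractor metric on $\wedge^3\mathbb{T}$ involves many index contractions, and the components $u_i,a_i$ of $U^{\gamma}$ and $A^{\gamma}$ carry derivatives of $\gamma$ that must be tracked through the action of $\exp(-\Phi(X_{\fn}))\exp(-\Phi(X_{\fa}))\cdots$ in \eqref{parsec}. I would handle this the same way as in Proposition \ref{prop-sol-godel} and the examples of Section \ref{section4}, namely by implementing the algorithm of Section \ref{section3.3} in a computer algebra system (Maple, as already used in the paper) and simplifying the output; no conceptual ingredient beyond Theorems \ref{thm-loc} and \ref{thm-coord} and the conserved--quantity proposition is needed.
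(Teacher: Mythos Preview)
Your proposal follows essentially the same route as the paper: compute the space $\mathbb{S}\subset\wedge^3\mathbb{T}$ of conformal Killing--Yano $2$--forms via Theorem \ref{thm-loc}, extend to tractor sections using \eqref{parsec} in the chosen exponential coordinates, and contract with $\Sigma^\gamma$ via the tractor metric; the paper does exactly this (in Maple), recording explicitly that $\dim\mathbb{S}=9$ and that $\Phi$ is the adjoint representation of $\frak{sl}(3,\R)$. Two minor slips to fix: $\rho$ is a representation of $\frak{so}(3,3)$, not of $\frak{sl}(3,\R)$ (it is $\Phi$ that acts on $\fk=\frak{sl}(3,\R)$), and the standard basis of $\wedge^3\mathbb{T}$ is $t_{[ijk]}$, not $t_{[ijkl]}$.
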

\begin{proof}
As before we use Maple to compute the data from Theorem \ref{thm-loc} for  conformal Killing--Yano $2$--forms. We obtain that $\mathbb{S}\subset  \wedge^3\mathbb{T}$ has the coordinates 
\begin{gather*}
(0, {\scriptstyle\frac32}(v_5+v_8), 0, v_1, 0, {\scriptstyle\frac32}(v_5+v_8), v_2, 0, v_3, v_4, v_2, -v_1,
\\
 0, v_4, v_5-v_8+v_9, 2v_6, -v_3, 2v_7, -v_5+v_8+v_9, 0)
\end{gather*}
 in the standard basis  $t_{[ijk]}, 1\leq i<j<k\leq 6$ of $\wedge^3\mathbb{T}$, i.e., $v_9$ corresponds to $t_{[246]}+t_{[356]}$. The representation $\Phi$ is the adjoint representation of $\fk$ on $$\left[ \begin {smallmatrix} v_9-v_5-v_8&v_3&v_4\\ v_1&v_9+v_5&v_6\\ v_2&v_7&v_9+v_8\end {smallmatrix} \right],$$  If we replace the exponential coordinates from the formula \eqref{parsec} with  our exponential coordinates, we get the following sections of the tractor bundle
\begin{gather*}
v_1\big(0, {\scriptstyle \frac32}x_3, 0, 1, 0, {\scriptstyle \frac32}x_3, 0, 0, -x_3^2, -x_3x_4, 0, -1, 0, -x_3x_4, x_3, 2x_4, x_3^2, 0, -x_3, 0\big)+
\\
v_2\big(0, {\scriptstyle \frac32}x_4, 0, 0, 0, {\scriptstyle \frac32}x_4, 1, 0, -x_3x_4, -x_4^2, 1, 0, 0, -x_4^2, -x_4, 0, x_3x_4, 2x_3, x_4, 0\big)+
\\
v_3\big(0,-{\scriptstyle\frac32}{ x_1} ( { x_1}{ x_3}+{ x_2x_4}+1
 ) ,0,-{ x_1}^{2},0,-{\scriptstyle \frac32}{ x_1} ( { x_1}
{ x_3}+{ x_2x_4}+1 ) ,-{ x_1x_2},0,
\\
 x_1^{2
} x_3^{2}+{ x_3} ( { x_2x_4}+2 ) { x_1
}+{ x_2x_4}+1,{ x_1}{ x_4} ( { x_1}{ 
x_3}+{ x_2}{ x_4}+1 ) ,-{ x_1x_2},x_1^{
2},0,\\
{ x_1}{ x_4} ( { x_1}{ x_3}+{ x_2}
{ x_4}+1 ) ,-{ x_1} ( { x_1}{ x_3}-{
 x_2x_4}+1 ) ,-2x_1^{2}{x_4},
-{ x_1}
 ( { x_1}x_3^{2}+
\\
2{ x_3}+{ x_2x_3x_4}
 ) -{x_2x_4}-1,-2{ x_2} ( { x_1}{ 
x_3}+1 ) ,{ x_1} ( { x_1}{ x_3}-{ 
x_2x_4}+1 ) ,0\big)+
\\
+v_4\big(0,-{\scriptstyle \frac32}{ x_2} ( { x_2}{ x_4}+{ x_1x_3}+1
 ) ,0,
-{ x_1x_2},0,-{\scriptstyle \frac32}{ x_2} ( { x_2}{
 x_4}+{ x_1x_3}+1 ) ,-x_2^{2},0,
\\
{ x_2}{
 x_3} ( { x_1}{ x_3}+{ x_2}{ x_4}+1
 ) ,{ x_2}{ x_4} ( { x_1}{ x_3}+{ 
x_2}{ x_4}+2 ) +{ x_1x_3}+1,- x_2^{2},{ 
x_1x_2},0,
\\
{ x_2}{ x_4} ( { x_1}{ x_3}+{
 x_2}{ x_4}+2 ) +{ x_1x_3}+1,{ x_2} ( 
{ x_2}{ x_4}-{ x_1x_3}+1 ) ,-2{ x_1}
 ( { x_2}{ x_4}+1 ),
\\
-{ x_2}{ x_3}
 ( { x_1}{ x_3}+{ x_2}{ x_4}+1 ) ,-2{
{ x_2}}^{2}{ x_3},{ x_2} ( -{ x_2}{ x_4}+
{ x_1x_3}-1 ) ,0 \big)
\\
v_5\big(0,3{ x_1}{ x_3}+{\scriptstyle \frac32}({ x_2x_4}+1),0,2{ x_1},0
,3{ x_1x_3}+{\scriptstyle \frac32}({ x_2x_4}+1),{ x_2},0,-{ x_3}
 ( 2{ x_1}{ x_3}+
\\
{ x_2}{ x_4}+2 ) ,-{
 x_4} ( 2{ x_1}{ x_3}+{ x_2}{ x_4}+1
 ) ,{ x_2},-2{ x_1},0,{ x_4} ( 2{ x_1
}{ x_3}+{ x_2}{ x_4}+1 ) ,
\\
2{ x_1x_3}-{
 x_2x_4}+1,4{ x_1x_4},{ x_3} ( 2{ x_1}{
 x_3}+{ x_2}{ x_4}+2 ) ,2{ x_2x_3},-2{
 x_1x_3}+{ x_2x_4}-1,0\big)
+
\\
v_6\big(0, {\scriptstyle\frac32}x_2x_3, 0, x_2, 0, {\scriptstyle\frac32}x_2x_3, 0, 0, -x_2x_3^2, -x_2x_3x_4-x_3, 0, -x_2, 0, -x_2x_3x_4-x_3,
\\
 x_2x_3, 2(x_2x_4+1), x_2x_3^2, 0, -x_2x_3, 0\big)+
\\
v_7\big(0, {\scriptstyle\frac32}x_1x_4, 0, 0, 0, {\scriptstyle\frac32}x_1x_4, x_1, 0,
 -x_4(x_1x_3+1), -x_1x_4^2, x_1, 0, 0,
\\
 -x_1x_4^2, -x_1x_4, 0, x_4(x_1x_3+1), 2(x_1x_3+1), x_1x_4, 0\big)+
\\
v_8\big(0, 3x_2x_4+{\scriptstyle\frac32}(x_1x_3+1), 0, x_1, 0, 3x_2x_4+{\scriptstyle\frac32}(x_1x_3+1), 2x_2, 0,
-x_3(x_1x_3+
\\
2x_2x_4+1),
-x_4(x_1x_3+2x_2x_4+2), 2x_2, -x_1, 0, -x_4(x_1x_3+2x_2x_4+2),
\\
x_1x_3-2x_2x_4-1,
 2x_1x_4, 
x_3(x_1x_3+2x_2x_4+1),
4x_2x_3, -x_1x_3+2x_2x_4+1, 0\big)\\
+v_9\big(0, 0, 0, 0, 0, 0, 0, 0, 0, 0, 0, 0, 0, 0, 1, 0, 0, 0, 1, 0\big),
\end{gather*}
After contracting these sections with $\Sigma^\gamma$ using the tractor metric induced by $\bg$, we obtain the claimed conserved quantities.
\end{proof}

Now, we can compute a co--homogeneity one $K$--orbit of conformal circles. Note that the conserved quantities implicitly describe all the conformal circles, but we need an ansatz on the conserved quantities to get explicit equations.

\begin{thm}\label{pbs-thm}
The following curves are conformal circles (up to parametrization) for all $c\in \R$, $c> 0$
\begin{gather*}
\gamma_1:=\frac{2 c^2-t}{2c^3}\\
\gamma_2:=-\frac{1}{2c^3t}\\
\gamma_3:=\frac1t\\
\gamma_4:=t
\end{gather*}
and their $K$--orbit is  of co--homogeneity one.
\end{thm}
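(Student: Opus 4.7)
The plan is to verify the theorem in two stages: first, that each curve in the family satisfies the conformal circle equation, and then that the $K$-orbits of the family form a codimension one subset of the space of conformal circles on $M$.

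For the first stage, I would substitute the curves into the tractor formulation at the start of Section \ref{section5}. A key simplification is that $\gamma_3\gamma_1' + \gamma_4\gamma_2' \equiv 0$ along this family, which collapses the norm to $u^2 = 2/(c^3 t^2)$ and yields the clean coframe evaluations $e^1(\gamma') = -1/(2c^3)$, $e^2(\gamma') = 1/(2c^3 t^2)$, $e^3(\gamma') = -1/t^2$, $e^4(\gamma') = 1$ via Lemma \ref{pfs-lem}. From these one assembles the tractor lifts $X^\gamma, U^\gamma, A^\gamma$ and checks the vanishing of
\[
\Sigma^\gamma \wedge \bigl((A^\gamma)' + \rho(\alpha(\cdots))(A^\gamma)\bigr)
\]
by symbolic computation. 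A more efficient route is to use the nine conserved quantities from Proposition \ref{pfs-q}: I would evaluate the $C_i$ on the family, impose an ansatz such as $u\,\gamma_3\gamma_4 = \text{const}$ that makes several of the $C_i$ vanish identically, and show that the remaining equations have exactly the claimed curves as solutions up to reparametrization.

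For the co-homogeneity claim, I would note that the space of unparametrized conformal circles on the four-dimensional $M$ has dimension nine (three-jets modulo the three-dimensional projective reparametrization group, so $3n-3$ with $n=4$), while $\dim K = 8$, so a generic $K$-orbit already has codimension one. I would then extract the parameter $c$ as a $K$-invariant of the curve by writing it in terms of $K$-invariant combinations built from the conserved quantities of Proposition \ref{pfs-q} (the $C_i$ themselves transform under the representation $\Phi$, so one needs a $\Phi$-invariant polynomial in them). This guarantees that distinct $c$ parametrize distinct orbits, so the 1-parameter family is a transverse slice.

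The main obstacle is the sheer volume of symbolic computation required to verify the conformal circle identity directly from the tractor formulation: the conformal extension $\alpha$ produces $6 \times 6$ matrices, and computing $(A^\gamma)'$ together with the wedge against $\Sigma^\gamma$ is unwieldy by hand. The reduction through the conserved quantities makes the argument tractable, but an explicit computer algebra implementation (as used throughout the paper) will still be required for the final check, as will the identification of the invariant encoding $c$.
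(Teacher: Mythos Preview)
Your verification plan for the first part is sound and your preliminary computations ($\gamma_3\gamma_1'+\gamma_4\gamma_2'\equiv 0$, $u^2=2/(c^3t^2)$, the four coframe evaluations) are all correct. This direct substitution route is different from the paper's argument, which runs in the opposite direction: the paper first studies how $K=Sl(3,\R)$ acts on the nine conserved quantities $(C_1,\dots,C_9)$ (the action is essentially the adjoint representation, since $\Phi$ on the Killing--Yano $2$--forms was identified with $\operatorname{ad}$), then chooses the specific normal form
\[
(C_1,\dots,C_9)=\bigl(0,0,-\tfrac{\sqrt2}{2}c^{1/2},0,\tfrac{\sqrt2}{2}c^{3/2},0,0,-\tfrac{\sqrt2}{2}c^{3/2},0\bigr)
\]
and \emph{derives} the curves by solving the resulting overdetermined algebraic--differential system for $\gamma_1,\gamma_2,\gamma_3$ in terms of $\gamma_4$, which stays free and is set to $t$. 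Your direct check is quicker once the curves are in hand, while the paper's route explains where the family comes from and why the ansatz is natural.

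Your co-homogeneity argument has a genuine gap. The dimension count $3n-3=9$ versus $\dim K=8$ only gives codimension \emph{at least} one; equality requires that the stabiliser of the curve in $K$ be discrete, and this is not automatic---a one-parameter subgroup of $K$ could preserve the unparametrised circle setwise. Extracting $c$ as a $K$--invariant shows that distinct values of $c$ lie in distinct orbits, but it does not bound the stabiliser dimension of a single orbit. The paper closes this by direct computation: it acts by $Sl(3,\R)$ on the explicit curve and checks (in Maple) that the orbit is genuinely $8$--dimensional. You will need either that computation or an explicit argument that no nonzero element of $\fk$ is tangent to the given conformal circle.
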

\begin{proof}
We investigate the action of $Sl(3,\R)$ on the conserved quantities from Proposition \ref{pfs-q}. This leads us to us an ansatz
$$\big(0, 0, -{ \frac{\sqrt{2}}{2}}c^{\frac12}, 0, { \frac{\sqrt{2}}{2}}c^{\frac32}, 0, 0, -{ \frac{\sqrt{2}}{2}}c^{\frac32}, 0\big)$$ for the value of the conserved quantities, because under this ansatz, we can derive the following equalities for conformal circles from the conserved quantities
\begin{gather*}
\gamma_1'' = -{ \frac{\gamma_4''}{2\gamma_4\gamma_3c^3}},\ \  
\gamma_2'' = { \frac{\gamma_4''\gamma_4-2\gamma_4'^2}{2\gamma_4^3c^3}},\ \  
\gamma_3'' = -\frac{\gamma_3(\gamma_4''\gamma_4-2\gamma_4'^2)}{\gamma_4^2},
\\
 \gamma_1' = -{ \frac{\gamma_4'}{2\gamma_3\gamma_4c^3}},\ \  
\gamma_2' = { \frac{\gamma_4'}{2\gamma_4^2c^3}}, \ \ 
\gamma_3' = -\frac{\gamma_3\gamma_4'}{\gamma_4}
\\
 \gamma_1 = { \frac{2\gamma_3c^2-1}{2\gamma_3c^3}}, \ \ 
\gamma_2 = -\frac{1}{2\gamma_4c^3}.
\end{gather*}
Thus we can pick $\gamma_4$ arbitrarily, compute the remaining functions and act by $Sl(3,\R)$ on the curves to check that the orbit is  codimension one $K$--orbit. One can choose $\gamma_4$ to get parametrization by arc--length, but for simplicity, we have chosen  $\gamma_4=t$  in our claim.
\end{proof}

Let us prove that the Theorem \ref{pbs-thm} can be generalized to an arbitrary dimension.

\begin{thm}
The following curves are conformal circles (up to parametrization) for all $c\in \R,$ $c> 0$
\begin{gather*}
\gamma_1:=\frac{2 c^2-t}{2c^3}\\
\gamma_2:=-\frac{1}{2c^3t}\\
\gamma_i:=0, n\geq i>2\\
\gamma_{n+1}:=\frac1t\\
\gamma_{n+2}:=t\\
\gamma_i:=0, 2n\geq i>n+2
\end{gather*}
in the exponential coordinates
 $$ {\sf c}: \left[\begin{smallmatrix}
0& X_2^t\\
X_1& 0  \\
\end{smallmatrix} \right]\mapsto \left[\begin{smallmatrix}
1& 0\\
X_1& \id  \\
\end{smallmatrix} \right]\left[\begin{smallmatrix}
1& X_2^t\\
0& \id  \\
\end{smallmatrix} \right]o$$
and their $K$--orbit is of codimension one.
\end{thm}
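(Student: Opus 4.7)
The plan is to reduce the statement to the $n=2$ case established in Theorem \ref{pbs-thm} by exploiting the symmetric--pair embedding $(\frak{sl}(3,\R),\frak{gl}(2,\R))\hookrightarrow(\frak{sl}(n+1,\R),\frak{gl}(n,\R))$ sitting in the top left corner. Let $\fc_0\subset\fc$ be the $4$--dimensional subspace of matrices $\left[\begin{smallmatrix}0 & X_2^t \\ X_1 & 0\end{smallmatrix}\right]$ with $X_1$ and $X_2$ supported only in their first two entries. The claimed curves take values in $\fc_0$, and $N:={\sf c}(\fc_0)\subset M$ is the orbit through $o$ of the embedded subgroup $Sl(3,\R)\hookrightarrow Sl(n+1,\R)$, which is naturally identified with $Sl(3,\R)/Gl(2,\R)$.

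First I would use the explicit formula for $\alpha$ from the preceding lemma to check that its $\R^n$--valued component, restricted to $\frak{sl}(3,\R)\subset\frak{sl}(n+1,\R)$, reproduces (under the identification $\R^4\cong \R^n|_{\fc_0}$) the $\R^4$--valued component of the $n=2$ conformal extension. This shows that the conformal class induced on $N$ from $M$ coincides with the split Fubini--Study class on $Sl(3,\R)/Gl(2,\R)$ of Theorem \ref{pbs-thm}. Since the embedding $N\hookrightarrow M$ arises from a symmetric--pair inclusion, it is totally geodesic in the symmetric metrics and hence conformally totally umbilical; consequently conformal circles of $(N,[g|_N])$ are conformal circles of $(M,[g])$. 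The curves of the theorem are conformal circles of $N$ by Theorem \ref{pbs-thm}, hence also of $M$.

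For the codimension--one claim, I would compute the stabilizer in $K=Sl(n+1,\R)$ of the unparametrized image $\gamma_c\subset M$. By construction it contains the commuting copy of $Sl(n-2,\R)$ acting on the last $n-2$ coordinates and trivially on $\fc_0$, and its intersection with the $Sl(3,\R)$--factor coincides with the stabilizer of $\gamma_c$ computed in Theorem \ref{pbs-thm}. A dimension count, comparing $\dim K$ minus this stabilizer with the appropriate dimension of the space of conformal circles, then identifies the orbit of the family $\{\gamma_c : c>0\}$ as codimension one, with the parameter $c$ playing the role of the transverse coordinate.

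The main obstacle is the conformally totally umbilical step: one must verify rigorously that the conformal circle equation $\Sigma^\gamma\wedge\nabla_{\gamma'}A^\gamma=0$ reduces from $M$ to $N$ along the curves $\gamma_c$. Although expected on general grounds from the symmetric--pair structure, the restriction of the normal tractor connection of $M$ to $TN$ differs from the intrinsic normal tractor connection of $(N,[g|_N])$ by dimension--dependent terms arising from the factor $\tfrac{n+1}{2(2n-1)}$ in the Rho--tensor part of $\alpha$ and from the contribution of $tr(A)\id_n$ on the transverse diagonal. One has to check that these discrepancies annihilate the tractor $\Sigma^{\gamma_c}$ because it lies in the sub--tractor bundle associated to $N$; this is either a direct block--matrix verification using the formula for $\alpha$ from the preceding lemma, or a conceptual argument via the invariance of the appropriate Weyl tensor components under the symmetric--pair reduction.
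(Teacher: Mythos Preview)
Your approach is genuinely different from the paper's, and the comparison is instructive. The paper does not invoke any totally umbilical or symmetric--pair machinery; it verifies the conformal circle equation directly by tracking the tractors $X^\gamma,U^\gamma,A^\gamma$ for the given curve in the ambient $\frak{so}(n+1,n+1)$--calculus. The key observations are: (i) $X^\gamma$ and $U^\gamma$ are literally the $n=2$ tractors padded by zeros; (ii) the extra $\frak{co}(n,n)$--valued connection terms in $\alpha(\sum_i H_ie^i)$ beyond those present for $n=2$ are off--diagonal pieces of the form $x_{n+j}\,dx_i$ with $j\neq 1,2$, and these vanish identically along the curve since $\gamma_{n+j}=0$ for $j>2$; (iii) the only genuine $n$--dependence that survives is the Rho coefficient $\tfrac{n+1}{2(2n-1)}$ in $\alpha_1$, and this merely shifts $(A^\gamma)'+\rho(\alpha(\dots))(A^\gamma)$ by a scalar multiple of $U^\gamma$, hence does not affect the wedge condition $\Sigma^\gamma\wedge(\cdot)=0$. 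This is exactly the ``block--matrix verification'' you flag as the main obstacle, but the paper treats it as the entire argument rather than as a lemma supporting a geometric reduction.

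Your totally umbilical framing is correct in spirit and would give a cleaner conceptual explanation, but as you rightly note, the statement ``conformal circles of a totally geodesic $N$ are conformal circles of $M$'' is not automatic at the tractor level precisely because the intrinsic and restricted normal Cartan connections differ by the Rho rescaling; you would end up doing essentially the same computation (ii)--(iii) to close the gap. For the codimension--one claim your stabilizer--plus--dimension--count sketch is reasonable but incomplete: you have not determined the stabilizer of $\gamma_c$ inside $Sl(3,\R)$ (Theorem \ref{pbs-thm} established codimension one only by a Maple check, not by computing the stabilizer), nor specified the dimension of the space of unparametrized conformal circles against which to count. The paper sidesteps this by verifying the $n=3$ case in Maple and arguing that the passage to general $n$ follows by the same block structure.
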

\begin{proof}
We conclude for the claimed curve  that $X^\gamma$ is just the above $X^\gamma$ for $n=2$ competed by zeros to the correct size, and therefore, the same holds for $U^\gamma$. For $A^\gamma$, we need to look at the component of $\alpha(\sum_i He^i)$ and verify that it does not induce any non--zero difference from the above $A^\gamma$ for $n=2$. It is not hard computation to show that apart from the diagonal, we have $x_{n+j}dx_{i}$ which indeed evaluates to zero on our curve in the difference from the above $A^\gamma$. To check that we have a conformal circle we need to use the tractor connection one more time and check that the result is still linearly dependent on $X^\gamma,U^\gamma,A^\gamma$. In this case also the difference in the $\alpha_1$--parts of $\alpha$ appear, but this shifts the result by multiple of $U^\gamma$ and thus does not influence the linear independence, i.e., the claimed curves are conformal circles (up to parametrization).

By the structure of $Sl(n+1,\R)$, it suffices to check the property that the $K$--orbit is of codimension one for $n=3$, because then it extends for the general $n$ by similar computations. It is not hard to achieve this using Maple.
\end{proof}

\appendix

\section{First BGG operators on homogeneous conformal geometries in exponential coordinates} \label{bgg-const}

Not all of the first BGG operators are described explicitly in the literature. Let us provide some details on how to describe them on homogeneous conformal geometries. Let us start by summarizing the necessary ingredients from the proof of Theorem \ref{thm-loc} in the local coordinates ${\sf c}:\fc \to M.$
\noindent \\[2mm] {{\bf (1)}}
 We have the formula $$\nabla^{\rho\circ \alpha}s= ds+\rho\circ \alpha\circ \big((\alpha_{-1})^{-1}\circ (e^1,\dots,e^n)+\sum_i H_ie^i)\big)(s)$$ for the action of the tractor connection on a section of the tractor bundle $\mathcal{V}=\fc\times \mathbb{V}$ represented by a function $s: \fc \to \mathbb{V}$.
\noindent \\[2mm] {{\bf (2)}}
 We need the coefficients of the polynomial $Q$ in $\partial^*\nabla^{\rho\circ \alpha}$, that is either provided by the representation theory, \cite{casimir}, or by solving 
$$\partial^* \nabla^{\rho\circ \alpha}(\id -Q \partial^* \nabla^{\rho\circ \alpha})s=0$$
 for a general polynomial  $Q$ in $\partial^*\nabla^{\rho\circ \alpha}$ and all $s: \fc\to \mathbb{X}.$ Let us note that if $\lambda$ is irreducible representation of $G$, then the degree 
of $Q$ is the number of irreducible $\frak{co}(p,q)$--submodules in $\mathcal{V}$ minus $2$.
\noindent \\[2mm] {{\bf (3)}}
 We need the projection $\pi_1: Ker(\partial^*)\subset \Omega^1(\mathbb{V})\to \mathcal{H}^1(\mathbb{V})$ that is obtained from the Kostant's version of Bott--Borel--Weil Theorem, \cite[Section 3.3]{parabook}.
\\[2mm] 
Having these we can write the first BGG operator $$\mathcal{D} s=\pi_1 \nabla^{\rho\circ \alpha}(\id -Q \partial^* \nabla^{\rho\circ \alpha})s$$ for $s: \fc\to \mathbb{X}.$ If $s_i$ are components of $s$ in a basis of $\mathbb{X}$ corresponding to sections $\sigma^i$ induced by a $\fc$--(co)frame, then $ds$ gets replaced by $d(s_i\sigma^i)-s_id\sigma^i.$ For example, if the $\fc$--frame has the form  $e_i=\sigma^i_j\partial_{a_j}$ in  $(a_1,\dots,a_n)$--coordinates on $\fc$, then for $\mathbb{X}=\R^n$ the section $s$ corresponds to the vector field $S=s_i\sigma^i_j\partial_{a_j}$ and $ds$ gets replaced by $(d(s_i\sigma^i_j)-s_id\sigma^i_j)\partial_{a_j}$. Similarly, if $\epsilon$ induced by $\fc$--coframe has the form $det(\sigma)^{-1}da_1\wedge \dots \wedge da_n$ in  $(a_1,\dots,a_n)$--coordinates, then for $\mathbb{X}=\mathbb{R}[w]$ the section $s$ corresponds to the conformal density $\tau=s\cdot \epsilon^{\frac{-w}{n}}$ and $ds$ gets replaced by $d\tau-
\frac{w}{n}Tr(\sigma^{-1}d\sigma)\tau.$
In particular, 
\begin{gather*}
\nabla S=(d(s_i\sigma^i_j)-s_id\sigma^i_j)\partial_{a_j}+[(\alpha_0\circ (\alpha_{-1})^{-1}\circ(da_1,\dots,da_n)(\sigma^t)^{-1}))(s)\\
+(d\iota_0\circ (da_1,\dots,da_n)(\sigma^t)^{-1}(H_1,\dots,H_n)^t)(s)]_i\sigma^i_j\partial_{a_j}\\
\nabla \tau=d\tau-\frac{w}{n}Tr(\sigma^{-1}d\sigma)\tau-wa_ke^k \tau
\end{gather*}
are the actions of the corresponding Weyl connections, where $a_k$ is from the formula for the Cartan connection according to decomposition \eqref{Cartan_connection}. Note that if $e_i$ is an orthonormal frame for a metric preserved by the Weyl connection, then $Tr(\sigma^{-1}d\sigma)=-na_ke^k$ and thus $\nabla \tau=d\tau$.

\begin{exam}\label{exam1}
We investigate for $n=4$ the tractor bundle that is the trace--free symmetric product of the standard tractor bundle, i.e., $\mathbb{V}=\bigodot^2_0\mathbb{T}$. The grading on $\mathbb{V}$ has the following $(1,4,1)$--block structure
$$\left[ \begin{smallmatrix}
\mathbb{V}_{2}& *& *\\ \mathbb{V}_1 & \mathbb{V}_0 & *\\ *& \mathbb{V}_{-1}& \mathbb{V}_{-2}\end{smallmatrix} \right],$$
 where the $*$--entries depend on the other entries and $\mathbb{X}=\mathbb{V}_{-2}=\R[2]$.
There are six irreducible components in $\mathbb{V}$ and thus $Q$ is a polynomial of degree four. We compute  $$Q=-{ \frac{35}{24}}-{\frac79}\partial^*\nabla^{\rho\circ \alpha}-{\frac{3}{16}}(\partial^*\nabla^{\rho\circ \alpha})^2-{ \frac{1}{48}}(\partial^*\nabla^{\rho\circ \alpha})^3-{ \frac{1}{1152}}(\partial^*\nabla^{\rho\circ \alpha})^4$$ (one can for simplicity assume that $\fk=\R^4$, $\alpha=\alpha_{-1}$ is identity on $\R^4$ and thus $H_i=0, i=1,2,3,4$, and do the computations explicitly for general polynomial of degree four). The projection $\pi_1$ is then a projection $\R^{4*}\otimes \mathbb{V}\to \bigodot^3_0 \R^{4*}[2]$ given by the trace--free part of complete symmetrization of $\R^{4*}\otimes \mathbb{V}_0=\R^{4*}\otimes  \bigodot^2 \R^{4*}[2]$. Finally, let us mention that the leading terms are third order partial derivatives that are trace--free and there are uniquely determined lower order terms making this operator conformally invariant. Thus finding the solutions of this BGG operator classically involves solving sixteen third--order equations for a function of four variables.
\end{exam}

\section{Example of construction of prolongation connection} \label{apendix}
The construction of the prolongation connection is given by the algorithm from Section \ref{section3.3}, however, the computation can be tedious. Therefore, we provide just one example of this construction, for the case of the G\"odel metric and the tractor bundle $\mathbb{V}=\bigodot^2_0\mathbb{T}$ and we follow the notation of Section \ref{section3.3}, i.e.,
$$\left[ \begin{smallmatrix}
\mathbb{V}_{2}& *& *\\ \mathbb{V}_1 & \mathbb{V}_0 & *\\ *& \mathbb{V}_{-1}& \mathbb{V}_{-2}\end{smallmatrix} \right],\ \ \ 
\left[ \begin {smallmatrix} w&*&*&*&*&*\\ u_{{1}}
&r_{{1}}&*&*&*&*\\ u_{{2}}&r_{{5}}&r_{{2}}&*&*&*
\\ u_{{3}}&r_{{8}}&r_{{6}}&r_{{3}}&*&*
\\ u_{{4}}&r_{{9}}&r_{{10}}&r_{{7}}&r_{{4}}&*
\\ -r_{{9}}-\frac12 (r_{{2}}+ r_{{3}})&v_{{1}}&v_{{2}}&v_{{3}}&v_{{4}}&s
\end {smallmatrix} \right] 
.
$$
We start with the tractor connection and according to the algorithm, we construct $c_i$ and $\psi_i$ to find $\Psi$ and thus the prolongation connection. It turns out that our computation involves  seven steps. 
\noindent \\[2mm] {\bf (1)}
In the homogeneity $2$, we find $c_1=6$ and $\psi_1(x_1,x_2,x_3,x_4,0)$ then equals to a  $\frac16$--multiple of
\[
-\frac23 \cdot\left[ \begin {smallmatrix} 0&*&*&*&*&*\\ 
 (( r_{{2}}+r_{{3}}-2r_{{9}} ) x_{{1}}-x_{{2}}r_{{5}}
-x_{{3}}r_{{8}})+x_{{4}}r_{{1}}&2x_{{1}}v_{{1}}&
*&*&*&*
\\ 
( r_{{9}}-r_{{3}} ) x_{{2
}}-\frac12(x_{{1}}r_{{10}}+x_{{4}}r_{{5}})+x_{{3}}r_{{6}}&-\frac12z_{12}&z_{14}-2x_{{3}}v_{{3}}&
*&*&*
\\ 
 ( r_{{9}}-r_{{2}}
 ) x_{{3}}-\frac12(x_{{1}}r_{{7}}-x_{{4}}r_{8})+x_{{2}}r_{{6}}
&
-\frac12z_{13}&
z_{23}&
z_{14}-2x_{{2}}v_{{2}}&
*&*
\\ 
\frac12 (( r_{{2}}+r_{{3}}-2r_{{9}} ) x
_{{4}}-x_{{2}}r_{{10}}-x_{{3}}r_{{7}})+x_{{1}}r_{{4}}&
x_{{2}}v_{{2}}+x_{{3}}v_{{3}}-z_{14}&
-\frac12z_{24}&
-\frac12z_{34}&
2x
_{{4}}v_{{4}}&
*
\\ 
0&0&0&0&0&0\end {smallmatrix} \right], 
\]
where we write $z_{ij}=v_ix_j+v_jx_i$.
\noindent \\[2mm] {\bf (2)}
In the homogeneity $3$, we find $c_2=6$ and $(\psi_2-\psi_1)(x_1,x_2,x_3,x_4,0)$ then equals to a  $\frac16$--multiple of
\[
 {\sqrt{2}}\cdot\left[ \begin {smallmatrix}  ( 2r_{{7}}-4r_{{8}} ) 
x_{{2}}+4 ( r_{{5}}-\frac12r_{{10}} ) x_{{3}}&*&*&*&*&*
\\ \frac34(x_{{3}}v_{{2}}-x_{{2}}v_{{3}})&0&*&*&*&*
\\ \frac14 ( x_{{4}}-2x_{{1}} ) v_{{3}}-
2 ( v_{{1}}-\frac12v_{{4}} ) x_{{3}}&-\frac34x_{{3}}s&0&*&*&*
\\ \frac14 ( 2x_{{1}}-x_{{4}} ) v_{{2}}+2
 ( v_{{1}}-\frac12v_{{4}} ) x_{{2}}&\frac34x_{{2}}s&0&0&*&*
\\ \frac32(x_{{2}}v_{{3}}-x_{{3}}v_{{2}})&0&\frac32x
_{{3}}s&-\frac32x_{{2}}s&0&*\\ \noalign{\medskip}0&0&0&0&0&0\end {smallmatrix}
 \right].
\]
\noindent \\[2mm] {\bf (3)}
Next, in the homogeneity $3$, we find $c_3=12$ and $(\psi_3-\psi_2)(x_1,x_2,x_3,x_4,0)$ then equals to a  $\frac1{12}$--multiple of
\[
\frac{\sqrt{2}}{6}\cdot \left[ \begin {smallmatrix}  ( 10r_{{8}}-5r_{{7}} ) 
x_{{2}}-10 ( r_{{5}}-\frac12r_{{10}} ) x_{{3}}&*&*&*&*&*
\\ \frac12(x_{{2}}v_{{3}}-x_{{3}}v_{{2}})&0&*&*&*&*
\\ \frac12 ( 10x_{{1}}-5x_{{4}} ) v_{{3
}}+6 ( v_{{1}}-\frac12v_{{4}} ) x_{{3}}&0&0&*&*&*
\\ \frac12 ( -10x_{{1}}+5x_{{4}} ) v_{{
2}}-6 ( v_{{1}}-\frac12v_{{4}} ) x_{{2}}&0&0&0&*&*
\\ x_{{3}}v_{{2}}-x_{{2}}v_{{3}}&0&0&0&0&*
\\ 0&0&0&0&0&0\end {smallmatrix} \right].
\]
\noindent \\[2mm] {\bf (4)}
Next, in the homogeneity $3$, we find $c_4=8$ and $(\psi_4-\psi_3)(x_1,x_2,x_3,x_4,0)$ then equals to a  $\frac18$--multiple of
\[
\frac{\sqrt{2}}{9}\cdot \left[ \begin {smallmatrix} \frac14 ( 14r_{{8}}-7r_{{7}}
 ) x_{{2}}-\frac72 ( r_{{5}}-\frac12r_{{10}} ) x_{{3}}&*&
*&*&*&*\\ \frac12(x_{{3}}v_{{2}}-x_{{2}}v_{{3}})&0
&*&*&*&*\\ \frac12v_{{3}} ( 2x_{{1}}-x_{{4}}
 ) &0&0&*&*&*\\ \frac12 ( -2x_{{1}}+x_{{4
}} ) v_{{2}}&0&0&0&*&*\\ x_{{2
}}v_{{3}}-x_{{3}}v_{{2}}&0&0&0&0&*\\ 0&0&0&0&0&0\end {smallmatrix}
 \right] 
.
\]
\noindent \\[2mm] {\bf (5)}
Next, in the homogeneity $3$, we find $c_5=12$ and $(\psi_5-\psi_4)(x_1,x_2,x_3,x_4,0)$ then equals to a  $\frac1{12}$--multiple of
\[
\frac{\sqrt{2}}{4}\cdot
\left[ \begin {smallmatrix} \frac12 ( -2r_{{8}}+r_{{7}}
 ) x_{{2}}+ ( r_{{5}}-\frac12r_{{10}} ) x_{{3}}&*&*&*&*
&*\\ 0&0&*&*&*&*\\ 0&0&0&*&*&*
\\ 0&0&0&0&*&*\\ 0&0&0&0&0&*
\\ 0&0&0&0&0&0\end {smallmatrix} \right] 
\]
\noindent \\[2mm] {\bf (6)}
In the homogeneity $4$, we find $c_6=8$ and $(\psi_6-\psi_5)(x_1,x_2,x_3,x_4,0)$ then equals to a  $\frac1{8}$--multiple of
\[
 \frac19\cdot\left[ \begin {smallmatrix}  ( 16x_{{1}}-8x_{{4}} ) 
v_{{1}}+ ( -8x_{{1}}+4x_{{4}} ) v_{{4}}+12(v_{{2}}x_{{
2}}+v_{{3}}x_{{3}})&*&*&*&*&*\\ s ( 5x_{{1
}}-2x_{{4}} ) &0&*&*&*&*\\ -5x_{{2}}s&0&0&
*&*&*\\ -5x_{{3}}s&0&0&0&*&*\\ 
 ( -8x_{{1}}+5x_{{4}} ) s&0&0&0&0&*
\\ 0&0&0&0&0&0\end {smallmatrix} \right] 
\]
\noindent \\[2mm] {\bf (7)}
Finally, in the homogeneity $4$, we find $c_7=12$ and $(\psi_7-\psi_6)(x_1,x_2,x_3,x_4,0)$ then equals to a  $\frac1{12}$--multiple of
\[
 \frac19\cdot\left[ \begin {smallmatrix} -v_{{1}}x_{{4}}-v_{{2}}x_{{2}}-v_{{3}}x
_{{3}}-v_{{4}}x_{{1}}&*&*&*&*&*\\ 0&0&*&*&*&*
\\ 0&0&0&*&*&*\\ 0&0&0&0&*&*
\\ 0&0&0&0&0&*\\ 0&0&0&0&0&0
\end {smallmatrix} \right] 
\]
Let us note that there is nothing in homogeneity $5$ and the computation is finished.

Altogether, the prolongation connection $\na^\Phi$ is given by $\Phi=\rho \circ \alpha +\psi_7$, where one gets $\psi_7$ by adding appropriate multiples of the above matrices.

\end{document}